\newtheorem{theorem}{Theorem}[section]
\newtheorem{proposition}{Proposition}[section]
\newtheorem{lemma}{Lemma}[section]
\newtheorem{definition}{Definition}[section]
\newtheorem{corollary}{Corollary}[section]
\newenvironment{proof}[1]{{\noindent {\emph{Proof}}}\quad}{\hfill $\square$\par}
\newenvironment{proofoftheorem}[1]{{\noindent\it Proof of theorem #1}\quad}{\hfill $\square$\par}
\begin{document}
\title{Littlewood-Paley Characterization for Musielak-Orlicz-Hardy Spaces Associated with Operators}
\author{Jiawei Shen$^1$ $\cdot$ Zhitian Chen$^1$ $\cdot$ ShunchaoLong$^1$}
\maketitle
\newcommand\blfootnote[1]{%
\begingroup
\renewcommand\thefootnote{}\footnote{#1}%
\addtocounter{footnote}{-1}%
\endgroup
}

\blfootnote{Shunchao Long: sclong@xtu.edu.cn}
\footnotetext[1]{School of Mathematics and Computational Science, Xiangtan University, Xiangtan, 411105, China}

\noindent \textbf{Abstract}\quad Let $X$ be a space of homogeneous type. Assume that $L$ is an non-negative second-order self-adjoint operator on $L^2\left(X\right)$ with (heart) kernel associated to the semigroup $e^{ - tL}$ that satisfies the Gaussian upper bound. In this paper, the authors introduce a new characterization of the Musielak-Orlicz-Hardy Space $H_{\varphi, L}\left(X\right)$ associated with $L$ in terms of the Lusin area function where $\varphi$ is a growth function. Further, the authors prove that the Musielak-Orlicz-Hardy Space $H_{L,G,\varphi}\left(X\right)$ associated with $L$ in terms of the Littlewood-Paley function is coincide with $H_{\varphi, L}\left(X\right)$ and their norms are equivalent.

\quad

\noindent\textbf{Keywords}\quad Musielak-Orlicz Hardy spaces $\cdot$ Heat semigroup $\cdot$ Gaussian estimate $\cdot$
Nonnegative self-adjoint operator $\cdot$ Area and Littlewood-Paley functions

\quad

\noindent\textbf{Mathematics Subject Classification}\quad 42B20 $\cdot$ 42B25 $\cdot$ 42B30 $\cdot$ 42B35 $\cdot$ 46E30 $\cdot$ 47B38
\section{Introduction}
Recently, the study of the Hardy spaces associated with operators has been in the spotlight. This topic was initiated by \citet{auscher:2005}, who studied the Hardy space $H^1_{L}\left(\mathbb{R}^n\right)$ associated with the operator $L$ whose heat kernel satisfies the pointwise Poisson upper bounded condition. Later on, the adapted BMO theory has been presented by \citet{Duong:2005,Thinh:2010}, under the assumption that the heat kernel associated to $L$ satisfies the pointwise Gaussian estimate. The theory of the Hardy space $H^p_{L}\left(\mathbb{R}^n\right)$ for $0 < p < 1$ associated with the operator $L$ satisfying the Davies-Gaffney estimates was established by \citet{Yan:2008}.
It is then quite natural to consider the weighted Hardy spaces $H^p_{L,\omega}$ associated with an operator $L$ and a weight function $\omega$. \citet{Song:2010} first introduced the weighted Hardy space $H^1_{L,\omega}\left(\mathbb{R}^n\right)$ associated with the Schrödinger operator $L$ for $\omega \in A_{\infty}\left(\mathbb{R}^n\right)$. Recently, \citet{Duong:2016} considered two kinds of weighted Hardy spaces on the homogeneous spaces $X$ associated with an operator whose kernel satisfying the Gaussian upper bound. For $0 < p \le 1$ and $\omega \in A_{\infty}$, they first studied the weighted Hardy space $H^p_{L, S, \omega}\left(X\right)$ which defined in terms of the Lusin area function, and secondly turned to consider the weighted Hardy space $H^p_{L, G, \omega}\left(X\right)$ which defined in terms of the Littlewood-Paley function. Finally, they obtained the equivalence between the two kinds of weighted Hardy spaces by adding the Moser type condition for $L$. Subsequently, the equivalence of these two kinds of weighted Hardy spaces was demonstrated by \citet{Guorong:2016} without using the additional Moser type estimate.

On the other hannd, \citet{Ky:2011} presented a new Musielak-Orlicz-Hardy space, $H_{\varphi}\left(\mathbb{R}^n\right)$, defined via a growth function $\varphi$ (see Sect.2 below for the definition of growth function). As an natural generalization, the Musielak-Orlicz-Hardy space $H_{\varphi, L}$, defined via the Lusin area function associated with an operator $L$ that satisfies the Davies-Gaffney estimate, which contains the weighted Hardy space $H^p_{L, S, w}\left(X\right)$ in \citep{Duong:2016}, had been introduced and systematically studied by Yang et al. in \cite{Yang:2012} later on. Characterizations of $H_{\varphi, L}$, including the atom, the molecule, etc. was obtained in \cite{Yang:2012}. However, to characterize $H_{\varphi, L}$, Yang et al. needed to impose an extra assumption that the growth function $\varphi$ satisfies the uniformly reverse H\"older condition.

Throughout this article $\left(X, d, \mu\right)$ is a metric measure space endowed with a distance $d$ and a non-negative Borel doubling measure $\mu$. And we assume that $L$ is a densely defined operator on $L^2 \left(X\right)$ and satisfies the following two conditions in different sections of this paper.
{
\begin{enumerate}[(\textbf{H}1)]
  \renewcommand{\labelenumi}{(\textbf{H\theenumi})}
  \setlength{\leftmargin}{1.2em}
  \item $L$ is a second-order non-negative self-adjoint operator on $L^2 \left(X\right)$;
  \item The kernel of $e^{-tL}$, denote by $p_t \left(x, y\right)$, is a measurable function on $X \times X$ and satisfies the Gaussian estimates, namely, there exist positive constants $C_{1}$ and $C_{2}$ such that, for all $t > 0$, and $x, y \in X$,
      {
      \setlength\abovedisplayskip{1ex}
      \setlength\belowdisplayskip{1ex}
      \begin{equation}\label{Eq.GE}
        \left| {{p_t}\left( {x,y} \right)} \right| \le \frac{{{C_1}}}{{V\left( {x,\sqrt t } \right)}}\exp \left( { - \frac{{d{{\left( {x,y} \right)}^2}}}{{{C_2}t}}} \right),
      \end{equation}
      where $V\left(x, \sqrt{t}\right) = \mu \left(B\left(x, \sqrt{t}\right)\right)$.
      }
\end{enumerate}
}
Given an operator $L$ that satisfying (\textbf{H1}) and (\textbf{H2}) and a function $f \in L^2 \left(X\right)$, we consider the following Littlewood-Paley function $G_{L}\left(f\right)$ and Lusin area function $S_{L}\left(f\right)$ associated with the heat semigroup generated by $L$
{
\setlength\abovedisplayskip{1ex}
\setlength\belowdisplayskip{1ex}
\begin{equation}\label{Eq.Littlewood_Paley_Function}
  {G_L}\left( f \right)\left( x \right): = {\left( {\int_0^\infty  {|{t^2}L{e^{ - {t^2}L}}f\left( x \right){|^2}\frac{{dt}}{t}} } \right)^{{1 \mathord{\left/
 {\vphantom {1 2}} \right.
 \kern-\nulldelimiterspace} 2}}}
\end{equation}
and
\begin{equation}\label{Eq.Lusin_Area_Function}
  {S_L}\left( f \right)\left( x \right): = {\left( {\int_0^\infty  {\int_{d\left( {x,y} \right) < t} {|{t^2}L{e^{ - {t^2}L}}f\left( y \right){|^2}\frac{{d\mu \left( y \right)}}{{\mu \left( {B\left( {x,t} \right)} \right)}}} \frac{{dt}}{t}} } \right)^{{1 \mathord{\left/
 {\vphantom {1 2}} \right.
 \kern-\nulldelimiterspace} 2}}}.
\end{equation}
}

In this paper, Musielark-Orlicz Hardy spaces $H_{\varphi, L}$ and $H_{L, G, \varphi}$ will be concerned. Their definitions are as follows.
{
  \setlength\abovedisplayskip{1ex}  
  \setlength\belowdisplayskip{1ex}
\begin{definition}\label{Def.H_S_L}
Let $L$ satisfies \rm{(\textbf{H1})} and \rm{(\textbf{H2})}, \emph{and $\varphi$ be a growth function. A function $f\in H^{2}(X)$ is said to be in ${\tilde H_{\varphi ,L}}( X )$ if $S_{L}\left(f\right) \in L^\varphi \left(X\right)$. Moreover, define
$$
  \setlength\abovedisplayskip{1ex}  
  \setlength\belowdisplayskip{1ex}
  {\left\| f \right\|_{{H_{\varphi ,L}}\left( X \right)}}: = {\left\| {{S_L}\left( f \right)} \right\|_{{L^\varphi }\left( X \right)}}: = \inf \left\{ {\lambda  \in \left( {0,\infty } \right);\int_X {\varphi \left( {x,\frac{{{S_L}\left( f \right)(x)}}{\lambda }} \right)d\mu \left( x \right)}  \le 1} \right\}.
$$
The Musielak-Orlicz-Hardy space $H_{\varphi, L}\left(X\right)$ is defined to be the completion of ${{\tilde H}_{\varphi ,L}}\left( X \right)$ with the quasi-norm ${\left\|  \cdot  \right\|_{{H_{\varphi ,L}}\left( X \right)}}$}.
\end{definition}
\begin{definition}\label{Def.H_G_L}
Let $L$ satisfies \rm{(\textbf{H1})} and \rm{(\textbf{H2})}, \emph{and $\varphi$ be a growth function. A function $f\in H^{2}(X)$ is said to be in ${\tilde H_{L, G, \varphi}}( X )$ if $G_{L}\left(f\right) \in L^\varphi \left(X\right)$. Moreover, define
$$
  \setlength\abovedisplayskip{1ex}  
  \setlength\belowdisplayskip{1ex}
  {\left\| f \right\|_{{H_{L, G, \varphi}}\left( X \right)}}: = {\left\| {{G_L}\left( f \right)} \right\|_{{L^\varphi }\left( X \right)}}: = \inf \left\{ {\lambda  \in \left( {0,\infty } \right);\int_X {\varphi \left( {x,\frac{{{G_L}\left( f \right)(x)}}{\lambda }} \right)d\mu \left( x \right)}  \le 1} \right\}.
$$
The Musielak-Orlicz-Hardy space $H_{L, G, \varphi}\left(X\right)$ is defined to be the completion of ${{\tilde H}_{L, G, \varphi}}\left( X \right)$ with the quasi-norm ${\left\|  \cdot  \right\|_{{H_{L, G, \varphi}}\left( X \right)}}$}.
\end{definition}
}

What deserves to be mentioned the most is that the Musielak-Orlicz Hardy space $H_{\varphi, L}$ introduced in \citep{Yang:2012} is associated with $L$ satisfying the Davies-Gaffney estimates, while the operator $L$ in definition \ref{Def.H_S_L} and definition \ref{Def.H_G_L} satisfies the stronger Gaussian estimates.

Motivated by the work of \citep{Duong:2016, Yang:2012, Guorong:2016}, the first contribution of this paper is to establish a discrete characterization for the two kinds of Musielak-Orlicz-Hardy spaces $H_{\varphi, L}\left(X\right)$ and $H_{L, G, \varphi}\left(X\right)$ defined above.
This generalizes the results presented in \citep{Duong:2016, Guorong:2016} since $H_{\varphi, L}\left(X\right)$ contains $H^p_{L, S, w}\left(X\right)$ and $H_{L, G, \varphi}\left(X\right)$ contains $H^p_{L, G, w}\left(X\right)$.
Also, by removing the uniformly reverse H\"older condition on growth function $\varphi$, our work improves a part of results of \citet{Yang:2012}.
The second goal of this article is to prove that $H_{\varphi, L}\left(X\right)$ and $H_{L, G, \varphi}\left(X\right)$ are equivalent, which improves the result about the behavior of Littlewood-Paley $g$-function $G_L$ on $H_{\varphi, L}$ proved in \citep[Theorem 6.3]{Yang:2012}.

Our main approach is inspired by the results in \citep{Weiss:1991, Duong:2016}.The layout of this article is as follows. We first recall some basic facts and known results in Sect. 2. In Sect.3, we first establish discrete characterizations for $H_{\varphi, L}\left(X\right)$ and $H_{L, G, \varphi}\left(X\right)$ and then obtain the consistency between $H_{\varphi, L}\left(X\right)$ and $H_{L, G, \varphi}\left(X\right)$ in the sense of norm as a corollary.

Throughout this paper, we mean by writting $a \cong b$ that variables $a$ and $b$ are equivalent, namely, there exist positive constants $C_1$ and $C_2$ independent of $a$ and $b$ such that $C_1 b \le a \le C_2 b$.

\section{Preliminaries}
\subsection{Metric Measure Spaces}
Let $\left(X, d, \mu \right)$ be a metric measure space, namely, $d$ is a metric and $\mu$ a nonnegative Borel regular measure on $X$. Throughout out this paper, for any fixed $x \in X$ and $r \in \left(0, \infty \right)$, we denote the open ball centered at $x$ with radius $r$ by
$$
\setlength\abovedisplayskip{1ex}
\setlength\belowdisplayskip{1ex}
B\left( {x,r} \right): = \left\{ {y \in X;d\left( {x,y} \right) < r} \right\},
$$
and we set $V\left(x, r\right):= \mu \left( B\left(x, r\right)\right)$. Moreover, we assume that $X$ is of homogeneous type, that is, there exists a constant $C_D \in \left[1, \infty\right)$ such that, for all $x \in X$ and $r \in \left(0, \infty\right)$,
{
\setlength\abovedisplayskip{1ex}
\setlength\belowdisplayskip{1ex}
\begin{equation}\label{Eq.Property_of_Homogeneous_Type}
  V\left( {x,2r} \right) \le C_D V\left( {x,r} \right) < \infty.
\end{equation}

Condition \eqref{Eq.Property_of_Homogeneous_Type} is also called the doubling condition which implies that the following strong homogeneity property that, for some positive constants $C$ and $n$,
}
{
\setlength\abovedisplayskip{1ex}
\setlength\belowdisplayskip{1ex}
\begin{equation}\label{Eq.Doubling_dim_of_X_01}
  V\left( x, \lambda r \right) \le C \lambda ^n V\left( x, r \right)
\end{equation}
uniformly for all $\lambda \in \left[1, \infty\right)$, $x \in X$, and $r \in \left(0, \infty\right)$. And as is shown by \citet{Grigor:2009}, let $C_D$ be as in \eqref{Eq.Property_of_Homogeneous_Type} and $m = {\log _2}{C_D}$, then for all $x, y\in X$ and $0 < r \le R < \infty$ we have
}
{
\setlength\abovedisplayskip{0.5ex}
\setlength\belowdisplayskip{1ex}
\begin{equation}\label{Eq.Doubling_dim_of_X_02}
  V\left( {x,R} \right) \le {C_D}{\left[ {\frac{{R + d\left( {x,y} \right)}}{r}} \right]^m}V\left( {y,r} \right).
\end{equation}
Using the doubling condition \eqref{Eq.Property_of_Homogeneous_Type}, it is trivial to show that for any $N > n$, there exists a constant $C_N$ such that for all $x \in X$ and $t > 0$,
}
{
\setlength\abovedisplayskip{1ex}
\setlength\belowdisplayskip{1ex}
\begin{equation}\label{Eq.Doubling_Int_Properties}
\int_X {{{\left( {1 + {t^{ - 1}}d\left( {x,y} \right)} \right)}^{ - N}}d\mu \left( y \right)}  \le {C_N}V\left( {x,t} \right).
\end{equation}

We further have the following dyadic cubes decomposition on spaces of homogeneous type constructed by \citet{Christ:1990}.
}
{
\setlength\abovedisplayskip{0.5ex}
\setlength\belowdisplayskip{1ex}
\begin{lemma}{}\label{Lemma_Decompositon_of_Space}
    Let $\left( {X,d,\mu } \right)$ be a space of homogeneous type. Then, there exist a collection\\ $\left\{ {Q_\alpha ^k \subset X:k \in \mathbb{Z},\alpha  \in {I_k}} \right\}$ of open subsets, where $I_{k}$ is some index set, and constants $\delta  \in \left( {0,1} \right)$, and $C_{1}, C_{2} > 0$, such that
    \begin{enumerate}[\rm(i)]
        \item $\mu \left( {X\backslash { \cup _\alpha }Q_\alpha ^k} \right) = 0$, for each fixed $k$ and $Q_\alpha ^k \cap Q_\beta ^k = \emptyset $ if $\alpha  \ne \beta $;
        \item for any $\alpha, \beta, k, l$ with $k \le l$, either $Q_\beta ^l \subset Q_\alpha ^k$ or $Q_\beta ^l \cap Q_\alpha ^k = \emptyset $;
        \item for each $\left( k, \alpha\right)$ and each $l < k$, there exists a unique $\beta \in I_{l}$ such that $Q_\alpha ^k \subset Q_\beta ^l$;
        \item ${\rm{diam}}\left( {Q_\alpha ^k} \right) \le {C_1}{\delta ^k}$;
        \item each ${Q_\alpha ^k}$ contains some ball $B\left( {z_\alpha ^k,{C_2}{\delta ^k}} \right)$, where $z_\alpha ^k \in X$.
    \end{enumerate}
\end{lemma}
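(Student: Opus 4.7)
The plan is to follow Christ's classical construction: build the cubes $Q_\alpha^k$ by a top-down tree refinement of generalized Voronoi cells, since taking Voronoi cells at each scale independently would satisfy the size estimates but fail the nesting property (ii).

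First I would fix $\delta \in (0,1)$ sufficiently small and, for each $k \in \mathbb{Z}$, select by Zorn's lemma a maximal $\delta^k$-separated subset $\{z_\alpha^k\}_{\alpha \in I_k}$ of $X$. Maximality yields the covering property $X = \bigcup_\alpha B(z_\alpha^k, \delta^k)$, while separation gives $d(z_\alpha^k, z_\beta^k) \geq \delta^k$ for $\alpha \neq \beta$. Next I would define a parent map $\pi_k : I_{k+1} \to I_k$ by assigning each $\beta$ the index $\alpha$ minimizing $d(z_\beta^{k+1}, z_\alpha^k)$, with ties broken by a fixed well-ordering on $I_k$; the chosen parent then satisfies $d(z_\beta^{k+1}, z_{\pi_k(\beta)}^k) < \delta^k$. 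Iterating the parent map gives ancestor maps $\pi_k^l : I_l \to I_k$ for all $l \geq k$, which directly delivers the uniqueness statement (iii).

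To define the cubes themselves, I would form Voronoi-type cells $\widehat Q_\beta^l$ at each scale $l$ (assign each point to its nearest center $z_\beta^l$, with the same tie-breaking rule) and then set
\[
Q_\alpha^k \;:=\; \bigcup_{l \geq k}\; \bigcup_{\substack{\beta \in I_l \\ \pi_k^l(\beta) = \alpha}} \widehat Q_\beta^l .
\]
Taking the union over all descendant cells, rather than the level-$k$ Voronoi cell itself, is the key device that forces the nesting (ii): a finer cube is either a descendant of a coarser one (hence contained in it) or lives in a different branch of the ancestry tree. Disjointness (i) at a fixed level then follows from the disjointness of the $\widehat Q_\beta^l$'s together with the fact that distinct $\alpha \in I_k$ have disjoint descendant sets.

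The main obstacle is the quantitative control (iv) and (v). For (iv), an induction on the depth $l-k$ combined with the telescoping bound $d(z_\beta^l, z_{\pi_k^l(\beta)}^k) \leq \sum_{j=k}^{l-1}\delta^j \leq \delta^k/(1-\delta)$ should yield $\mathrm{diam}(Q_\alpha^k) \leq C_1 \delta^k$. The truly delicate step will be (v): I need to show that $z_\alpha^k$ sits deep inside every descendant Voronoi cell $\widehat Q_\beta^l$, so that some ball $B(z_\alpha^k, C_2 \delta^k)$ lies entirely in $Q_\alpha^k$. This requires propagating a quantitative distance-to-boundary estimate down the tree and choosing $\delta$ small enough that descendant chains cannot drift toward competing chains, which is precisely where the smallness $\delta \ll 1$ is essential. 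The careful case analysis of the Voronoi boundary geometry along the ancestry, as in Christ's original argument, is what I expect to be the hardest part.
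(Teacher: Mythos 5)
The paper does not prove this lemma at all; it simply cites it as Christ's dyadic decomposition \citep{Christ:1990}, so there is no in-paper argument to compare against. Evaluating your blind attempt on its own merits, the overall scaffolding (maximal $\delta^k$-separated nets, a parent map by nearest neighbor with a fixed tie-breaking rule, and an ancestry tree) is faithful to Christ's construction, and the telescoping estimate for (iv) and the remark about uniqueness in (iii) are both fine.

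The genuine gap is in the disjointness claim (i). You define $Q_\alpha^k$ as the union of Voronoi cells $\widehat Q_\beta^l$ over \emph{all} scales $l \geq k$ with $\pi_k^l(\beta) = \alpha$, and assert that disjointness at level $k$ follows ``from the disjointness of the $\widehat Q_\beta^l$'s together with the fact that distinct $\alpha$ have disjoint descendant sets.'' That argument only handles the case $l_1 = l_2$: for a fixed $l$ the cells $\widehat Q_\beta^l$ partition $X$, but the Voronoi partitions at two different scales are \emph{not} nested, and a point can lie in $\widehat Q_{\beta_1}^{l_1}$ and $\widehat Q_{\beta_2}^{l_2}$ with $l_1 \neq l_2$ while $\beta_1$ and $\beta_2$ have different level-$k$ ancestors. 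Concretely, in $(\mathbb{R},|\cdot|)$ with $\delta = 1/2$, take level-$0$ centers $\mathbb{Z}$ and level-$1$ centers $\tfrac{1}{2}\mathbb{Z}$; if the tie-break assigns $z = 1/2$ the parent $0$, then $Q_0^0$ contains $\widehat Q_0^0 = (-1/2,1/2)$ and also $\widehat Q_{1/2}^1 = (1/4,3/4)$, while $Q_1^0$ contains $\widehat Q_1^0 = (1/2,3/2)$, so $Q_0^0 \cap Q_1^0 \supset (1/2,3/4) \neq \emptyset$. Christ's actual construction avoids this by replacing the Voronoi cells $\widehat Q_\beta^l$ in the union with small balls $B(z_\beta^l, a_0\delta^l)$ for a small fixed $a_0$, which do not cover $X$ at a single scale; disjointness and the partition property (up to measure zero) then require a separate, more delicate argument using the separation of the nets and the tree structure. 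As a side remark, with your Voronoi definition the ball inclusion (v) would actually be trivial (since $Q_\alpha^k \supset \widehat Q_\alpha^k \supset B(z_\alpha^k, \delta^k/2)$ by the $\delta^k$-separation), so you have misidentified the hard step: (v) is easy while (i) is broken.
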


We can think of $Q_{\alpha}^k$ as being a dyadic cube with diameter roughly $\delta ^k$ centered at $y_{Q_{\alpha}^k}$, and we then set $\ell ( {Q_\alpha ^k} ) = {C_1}{\delta ^k}$. The precise value $C_1$ is nonessential, and as was proved by \citet{Christ:1990}, in what follows, we without loss of generality assume $C_1 = \delta ^{-1}$.
}

\subsection{Growth Functions and Their Properties}
 Recall from \citep{Yang:2012} that a nonnegative nondecreasing function $\varPhi$ defined on $\left[ {0, + \infty } \right)$ is said to be an \emph{Orlicz function} if
$\varPhi\left(0\right) = 0$, $\varPhi\left(t\right) > 0$ for all $t \in \left(0, \infty \right)$ and ${\lim _{t \to \infty }}\varPhi \left( t \right) = \infty $. The function $\varPhi$ is said to be of \emph{upper type} $p$ (resp., \emph{lower type} $p$)
for some $p \in \left[ 0, \infty\right)$, if there exists a positive constant $C$ such that, for all $t \in \left[ 1, \infty\right)$ (resp., $t \in \left[0, 1\right]$) and $s \in \left[0, \infty \right)$, $\varPhi\left(st\right) \le C t^{p} \varPhi\left(s\right)$.
And it is trivial that an Orlicz function is of upper type $1$, if it is of upper type $p \in \left(0, 1\right)$.

Let $\varphi :X \times \left[ {0, + \infty } \right) \to \left[ {0, + \infty } \right)$ be a function, such that, for any $x \in X$, $\varphi\left(x, \cdot \right)$ is an Orlicz function. We say that $\varphi$ is of \emph{uniformly upper type} $p$ (resp., \emph{uniformly lower type} $p$) for some $p \in \left[0, \infty \right)$, if there exists a positive constant $C$ such that, for all $x \in X$, $t \in \left[1, \infty\right)$ (resp., $t \in \left[0, 1\right]$) and $s \in \left[0, \infty \right)$,
{
\setlength\abovedisplayskip{1ex}
\setlength\belowdisplayskip{1ex}
\begin{equation}\label{Eq.Property_of_Growth_Function}
  \varphi \left( {x,st} \right) \le C{t^p}\varphi \left( {x,s} \right).
\end{equation}

As in \citet{Ky:2011}, a function $\varphi :X \times \left[ {0, + \infty } \right) \to \left[ {0, + \infty } \right)$ is said to be \emph{uniformly locally integrable}, if for all $t \in \left[0, \infty\right)$, $x \mapsto \varphi \left( {x,t} \right)$ is measurable and for all bounded subsets $K \subset X$,
}
$$
\setlength\abovedisplayskip{1ex}
\setlength\belowdisplayskip{0ex}
\int_K {\mathop {\sup }\limits_{t \in (0,\infty )} \left\{ {\varphi (x,t){{\left[ {\int_K {\varphi (y,t)d\mu (y)} } \right]}^{ - 1}}} \right\}d\mu (x)} < \infty.
$$

Following \citep{Yang:2012, Yang:2017}, we next recall the definition of the \emph{Uniform Muchenhoupt Class} and its properties.
{
\setlength\abovedisplayskip{0ex}
\setlength\belowdisplayskip{1ex}
\begin{definition}\label{Def.A_p_RH_p}\rm{
  Let $\varphi :X \times \left[ {0, + \infty } \right) \to \left[ {0, + \infty } \right)$ be uniformly locally integrable.
    The function $\varphi\left(\cdot, t\right)$ is said to satisfy the \emph{uniformly Muckenhoupt condition} for some $q \in \left[1, \infty\right)$, denoted by $\varphi \in \mathbb{A}_{q}\left(X \right)$, if, when $q \in \left(1, \infty\right)$,
        {
        \setlength\abovedisplayskip{1ex}
        \setlength\belowdisplayskip{1ex}
        \begin{multline*}
        {\mathbb{A}_q}\left( \varphi  \right): = \mathop {\sup }\limits_{t \in (0,\infty )} \mathop {\sup }\limits_{B \subset X} \left\{ {\frac{1}{{\mu \left( B \right)}}\int_B {\varphi \left( {x,t} \right)d\mu (x)} } \right\} \\
        \times {\left\{ {\frac{1}{{\mu \left( B \right)}}\int_B {{{\left[ {\varphi \left( {y,t} \right)} \right]}^{{{ - q'} \mathord{\left/
        {\vphantom {{ - q'} q}} \right.
        \kern-\nulldelimiterspace} q}}}d\mu (y)} } \right\}^{{q \mathord{\left/
        {\vphantom {q {q'}}} \right.
        \kern-\nulldelimiterspace} {q'}}}} < \infty,
        \end{multline*}
        }
        where ${1 \mathord{\left/
        {\vphantom {1 q}} \right.
        \kern-\nulldelimiterspace} q} + {1 \mathord{\left/
        {\vphantom {1 {q'}}} \right.
        \kern-\nulldelimiterspace} {q'}} = 1$, or
        $$
        \setlength\abovedisplayskip{1ex}
        \setlength\belowdisplayskip{1ex}
        {\mathbb{A}_1}\left( \varphi  \right): = \mathop {\sup }\limits_{t \in (0,\infty )} \mathop {\sup }\limits_{B \subset X} \frac{1}{{\mu \left( B \right)}}\int_B {\varphi \left( {x,t} \right)d\mu (x)} \left\{ {\mathop {{\rm{esssup}}}\limits_{y \in B} {{\left[ {\varphi (y,t)} \right]}^{ - 1}}} \right\} < \infty.
        $$
        We further define $\mathbb{A}_{\infty}\left(X\right): = { \bigcup _{q \in \left[ {1,\infty } \right)}}{\mathbb{A}_q}\left( X \right)$ and let
        $$
        \setlength\abovedisplayskip{1ex}
        \setlength\belowdisplayskip{1ex}
        q\left( \varphi  \right): = \inf \left\{ {q \in \left[ {1,\infty } \right);\varphi  \in {\mathbb{A}_q}\left( X \right)} \right\}
        $$
        to be the \emph{critical indices} of $\varphi$.
  }
\end{definition}

The following properties of $\mathbb{A}_\infty\left(X\right)$ and their proofs are similar to those
in \citet{Yang:2017}, and we omit the details. In what follows, we use the notation
$$
\setlength\abovedisplayskip{1ex}
\setlength\belowdisplayskip{1ex}
\varphi \left( {E,t} \right): = \int_E {\varphi \left( {x,t} \right)d\mu \left( x \right)}
$$
for any measurable subset $E$ of $X$ and $t \in \left[0, \infty\right)$. And $\mathcal{M}$ denotes
the Hardy-Liitlewood maximal function on $X$, i.e., for all $x\in X$,
$$
\setlength\abovedisplayskip{1ex}
\setlength\belowdisplayskip{1ex}
\mathcal{M}\left( f \right)\left( x\right): = \mathop {\sup }\limits_{B \ni x} \frac{1}{{\mu\left( B \right)}}\int_B {\left| {f\left( y \right)} \right|d\mu \left( y \right)},
$$
where the supremum is taken over all balls $B \ni x$.
\begin{lemma}{}\label{Lemma_Properties_of_Growth_Functions_03}
\begin{enumerate}

  \item ${\mathbb{A}_1}\left( X \right) \subset {\mathbb{A}_p}\left( X \right) \subset {\mathbb{A}_q}\left( X \right)$, for $1 \le p \le q < \infty $.
  \item If $\varphi \in \mathbb{A}_{p}\left(X\right)$ with $p\in\left(1, \infty\right)$, then there exists some $q\in \left(1, p\right)$ such that $\varphi \in \mathbb{A}_{q}\left(X\right)$.
  \item If $\varphi \in \mathbb{A}_{p}\left(X\right)$ with $p\in\left(1, \infty\right)$, then there exists a positive constant $C$ such that, for all measurable functions $f$ on $X$ and $t\in\left[0, \infty\right)$,
      $$
      \setlength\abovedisplayskip{1ex}
      \setlength\belowdisplayskip{1ex}
      \int_X {{{\left[ {\mathcal{M}\left( f \right)\left( x \right)} \right]}^p}\varphi \left( {x,t} \right)d\mu \left( x \right)}  \le C\int_X {{{\left| {f\left( x \right)} \right|}^p}\varphi \left( {x,t} \right)d\mu \left( x \right)} .
      $$
  \item If $\varphi \in \mathbb{A}_{p}\left(X\right)$ with $p\in\left[1, \infty\right)$, then there exists a positive constant $C$ such that, for all balls $B \subset X$ and measurable subset $E \subset B$ and $t\in \left[0, \infty\right)$, $\frac{{\varphi \left( {{B},t} \right)}}{{\varphi \left( {{E},t} \right)}} \le C{\left[ {\frac{{\mu \left( {{B}} \right)}}{{\mu \left( {{E}} \right)}}} \right]^p}.$
\end{enumerate}
\end{lemma}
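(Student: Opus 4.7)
The plan is to establish each of the four properties by adapting the classical Muckenhoupt weight arguments to the uniformly--parametrized setting, fixing $t \in [0,\infty)$ and ensuring that every constant depends only on the uniform bounds $\mathbb{A}_q(\varphi)$ and the doubling constant $C_D$, never on $t$ itself. For each fixed $t$, the function $\varphi(\cdot,t)$ is a genuine Muckenhoupt weight on $(X,d,\mu)$, so the standard arguments apply; and since the supremum in the definition of $\mathbb{A}_q(\varphi)$ is taken over $t$ as well, the constants transfer automatically to the uniform setting.

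Parts (1) and (4) are essentially algebraic. For (1), I would show $\mathbb{A}_p(X) \subset \mathbb{A}_q(X)$ for $p \le q$ by applying Jensen's inequality with the convex function $s \mapsto s^{(q-1)/(p-1)}$ to the factor $\frac{1}{\mu(B)}\int_B \varphi(y,t)^{-1/(q-1)} d\mu(y)$, bounding it from above by $\bigl(\frac{1}{\mu(B)}\int_B \varphi(y,t)^{-1/(p-1)}d\mu(y)\bigr)^{(q-1)/(p-1)}$, after which the $\mathbb{A}_p$ condition closes the estimate (the case $p=1$ is handled directly using the $\mathop{\mathrm{esssup}}$ formulation). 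For (4), I would invoke the equivalent reformulation of $\mathbb{A}_p$ obtained from the definition via H\"older's inequality, namely $\bigl(\frac{1}{\mu(B)}\int_B f\, d\mu\bigr)^p \le C\, \varphi(B,t)^{-1}\int_B f(x)^p \varphi(x,t) d\mu(x)$, and apply it to $f = \chi_E$, which immediately yields $\bigl(\mu(E)/\mu(B)\bigr)^p \le C\,\varphi(E,t)/\varphi(B,t)$.

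For (3), I would follow the classical route to weighted maximal inequalities. First, prove that $\mathcal{M}$ is of weak type $(1,1)$ with respect to the measure $\varphi(x,t)d\mu(x)$, with constant depending only on $\mathbb{A}_p(\varphi)$, using the Vitali--type covering lemma available on the space of homogeneous type $(X,d,\mu)$ together with the conclusion of part (4) applied to the level set of $\mathcal{M} f$. Combining this weak type bound with the trivial $L^\infty \to L^\infty$ estimate and then interpolating by the Marcinkiewicz theorem between weighted $L^p$ spaces yields the strong--type $(p,p)$ bound with a constant uniform in $t$.

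The main obstacle is (2), the self--improving (openness) property. The standard route is to prove a reverse H\"older inequality for $\mathbb{A}_p$ weights: there exist $\delta, C > 0$, depending only on $\mathbb{A}_p(\varphi)$, such that $\bigl(\frac{1}{\mu(B)}\int_B \varphi(y,t)^{-(1+\delta)/(p-1)}d\mu(y)\bigr)^{1/(1+\delta)} \le C \cdot \frac{1}{\mu(B)}\int_B \varphi(y,t)^{-1/(p-1)}d\mu(y)$, which after a short H\"older manipulation forces $\varphi \in \mathbb{A}_{p-\varepsilon}(X)$ for some $\varepsilon > 0$. To obtain this reverse H\"older I would run a Calder\'on--Zygmund stopping--time argument on the weight $\varphi(\cdot,t)^{-1/(p-1)}$ at dyadic levels, working with the Christ cubes of Lemma~\ref{Lemma_Decompositon_of_Space}, and distribute the resulting exponential decay in measure through Cavalieri. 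The delicate point is ensuring that the stopping--level comparison and the geometric summation produce constants $\delta, C$ independent of $t$; this is precisely where the uniform nature of the $\mathbb{A}_p$ bound is used, and once reverse H\"older holds with uniform constants, (2) follows in one line.
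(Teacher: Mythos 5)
The paper does not actually prove this lemma: it simply states that ``their proofs are similar to those in Yang, Liang, Ky (2017)'' and omits the details, so there is no in-paper argument to compare against. Your proposal is a genuine reconstruction of the classical Muckenhoupt-theory route on a space of homogeneous type, and the overarching observation---that for each fixed $t$ the function $\varphi(\cdot,t)$ is an honest $A_p$ weight and the uniformity in $t$ of $\mathbb{A}_q(\varphi)$ forces all constants to be $t$-independent---is exactly right, as is the use of Christ cubes in place of Euclidean dyadic cubes for the reverse H\"older stopping-time argument in part (2).

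There is, however, a real error in part (3). You propose to first prove that $\mathcal{M}$ is of weak type $(1,1)$ with respect to the measure $\varphi(x,t)\,d\mu(x)$ and then interpolate against the trivial $L^\infty\to L^\infty$ bound. But for a weight in $A_p$ with $p>1$, the Hardy--Littlewood maximal operator is \emph{not} of weak type $(1,1)$ with respect to the weighted measure; that property is equivalent to the weight being in $A_1$, which is strictly stronger. The correct classical path is: the $\mathbb{A}_p$ condition together with H\"older's inequality and a Vitali-type covering lemma gives the weak type $(p,p)$ estimate directly; then part (2) (openness, $\varphi\in\mathbb{A}_{p-\varepsilon}$) gives weak type $(p-\varepsilon,p-\varepsilon)$; and Marcinkiewicz interpolation between that and the trivial $L^\infty$ bound yields the strong $(p,p)$ inequality. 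You have all the ingredients on the page, but the interpolation endpoints are wrong as stated. (A minor additional slip: in part (1) the exponent produced by Jensen applied to $\varphi^{-1/(q-1)}=\bigl(\varphi^{-1/(p-1)}\bigr)^{(p-1)/(q-1)}$ is $(p-1)/(q-1)$, not $(q-1)/(p-1)$; the logic still closes after raising to the power $q-1$, but the displayed exponent is reversed.)
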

}
We now introduce the growth functions and their properties which can be found in \citep{Ky:2011, Yang:2012}.
\begin{definition}\label{Def.Growth_Fun}\rm{
A function $\varphi :X \times \left[ {0, + \infty } \right) \to \left[ {0, + \infty } \right)$ is called a \emph{growth function} if the following remain true:
\begin{enumerate}[\rm(i)]
  \item $\varphi$ is a \emph{Musielak-Orlicz function}, namely,
  \begin{enumerate}[\rm({i})$_1$]
  \setlength{\itemsep}{0ex} 
    \item the function $\varphi \left( {x, \cdot } \right): \left[ {0, + \infty } \right) \to \left[ {0, + \infty } \right)$ is an Orlicz function for all $x \in X$;
    \item b the funtion $\varphi \left( { \cdot ,t} \right)$ is an measurable function for all $t \in \left[0, \infty\right)$.
  \end{enumerate}
  \item $\varphi \in \mathbb{A}_{\infty}\left(X\right).$
  \item The function $\varphi$ is of positive uniformly upper type $p_1$ for some $p_{1} \in \left(0, 1\right]$ and of uniformly lower type $p_2$ for some $p_{2} \in \left(0, 1\right]$.
\end{enumerate}
}
\end{definition}
\begin{lemma}{}\label{Lemma_Properties_of_Growth_Functions_01}
Let $\varphi$ be a growth function.
 Set $\tilde \varphi \left( {x,t} \right): = \int_0^t {\varphi \left( {x,s} \right)\frac{{ds}}{s}} $ for all $\left(x, t\right)\in X\times \left[0, \infty\right)$. Then $\tilde \varphi$ is a growth function, which is equivalent to $\varphi$; moreover, $\tilde \varphi\left(x, \cdot\right)$ is continuous and strictly increasing.
\end{lemma}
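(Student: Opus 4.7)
The plan is to prove the lemma in three stages: first establish the pointwise equivalence $\tilde\varphi(x,t)\cong\varphi(x,t)$, then deduce from this equivalence that $\tilde\varphi$ inherits every defining property of a growth function from $\varphi$, and finally verify continuity and strict monotonicity of $\tilde\varphi(x,\cdot)$ directly from the integral representation.

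For the upper bound $\tilde\varphi(x,t)\lesssim\varphi(x,t)$, I would invoke the uniformly lower type $p_2$ of $\varphi$ against the inner variable: writing $s=(s/t)\,t$ with $s/t\in[0,1]$ and using \eqref{Eq.Property_of_Growth_Function}, we get $\varphi(x,s)\le C(s/t)^{p_2}\varphi(x,t)$, and integrating against $ds/s$ over $(0,t)$ yields $\tilde\varphi(x,t)\le (C/p_2)\,\varphi(x,t)$. For the reverse bound, since $\varphi(x,\cdot)$ is nondecreasing,
$$\tilde\varphi(x,t)\ge\int_{t/2}^{t}\varphi(x,s)\frac{ds}{s}\ge(\ln 2)\,\varphi(x,t/2).$$
The uniformly upper type $p_1$ applied with scaling factor $2$ gives $\varphi(x,t)\le C\cdot 2^{p_1}\varphi(x,t/2)$, whence $\tilde\varphi(x,t)\ge C'\,\varphi(x,t)$ with $C'=(\ln 2)/(C\cdot 2^{p_1})$.

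Once this equivalence is in hand, the verification that $\tilde\varphi$ is itself a growth function becomes essentially bookkeeping. Namely, $\tilde\varphi(x,\cdot)$ is nonnegative and nondecreasing (the integrand is nonnegative), vanishes at $0$, and tends to infinity at infinity by the equivalence; the map $x\mapsto\tilde\varphi(x,t)$ is measurable via Tonelli applied to the jointly measurable $(x,s)\mapsto\varphi(x,s)/s$; the class $\mathbb{A}_q(X)$ is stable under pointwise equivalence of weights because the defining ratio in Definition~\ref{Def.A_p_RH_p} changes only by a multiplicative constant, so $\varphi\in\mathbb{A}_\infty(X)$ forces $\tilde\varphi\in\mathbb{A}_\infty(X)$; and the uniform upper and lower type exponents $p_1,p_2$ transfer from $\varphi$ to $\tilde\varphi$ through the same equivalence.

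For the final clause, continuity of $t\mapsto\tilde\varphi(x,t)$ is just absolute continuity of an indefinite integral with locally integrable integrand, and strict monotonicity follows from
$$\tilde\varphi(x,t_2)-\tilde\varphi(x,t_1)=\int_{t_1}^{t_2}\varphi(x,s)\frac{ds}{s}>0 \qquad (0<t_1<t_2),$$
using $\varphi(x,s)>0$ for $s>0$. The only genuine point of care in the whole argument is the direction of the type inequalities in the lower bound $\varphi\lesssim\tilde\varphi$, where one must use the \emph{upper} type (to push $\varphi(x,t/2)$ back up to $\varphi(x,t)$) rather than the lower type; everything else is mechanical once the equivalence is secured.
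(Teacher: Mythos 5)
Your proof is correct. The paper itself gives no proof of this lemma; it is stated as a known fact and attributed to \citep{Ky:2011, Yang:2012}, so there is no in-paper argument to compare against, but your argument is exactly the standard one used in those references. The two key calculations are both right: the upper bound $\tilde\varphi(x,t)\le (C/p_2)\varphi(x,t)$ from the lower-type condition applied to $\varphi(x,s)=\varphi\bigl(x,(s/t)\,t\bigr)$ with scaling factor $s/t\in(0,1]$ (and it is precisely the positivity $p_2>0$ that makes $\int_0^1 u^{p_2-1}\,du$ finite), and the lower bound via monotonicity plus upper type with factor $2$. Your observation at the end — that it is the \emph{upper}-type inequality which must be used to pass from $\varphi(x,t/2)$ back to $\varphi(x,t)$ — is the one place a careless reader would slip, and you flag it correctly. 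The transfer of the $\mathbb{A}_\infty$ membership and of both uniform type exponents through the pointwise two-sided bound $c_1\varphi\le\tilde\varphi\le c_2\varphi$ is routine and you handle it adequately; likewise the measurability-in-$x$ via Tonelli and the continuity and strict increase of $\tilde\varphi(x,\cdot)$ from the integral representation with positive integrand. No gaps.
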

\subsection{Musielak-Orlicz Space}
In this subsection we recall the Musielak-Orlicz Space and obtain a vector-valued inequality. In what follows, we always assume $\varphi$ is a \emph{growth function}.

The \emph{Musielak-Orlicz space} $L^{\varphi}\left(X\right)$ contains all measurable functions $f$ which satisfy $\int_X {\varphi \left( {x,\left| {f\left( x \right)} \right|} \right)d\mu \left( x \right)}  < \infty $ with \emph{Luxembourg norm}
$$
\setlength\abovedisplayskip{1ex}
\setlength\belowdisplayskip{1ex}
{\left\| f \right\|_{{L^\varphi }\left( X \right)}}: = inf\left\{ {\lambda  \in \left( {0,\infty } \right);\int_X {\varphi \left( {x,\frac{{\left| {f\left( x \right)} \right|}}{\lambda }} \right)d\mu \left( x \right)}  \le 1} \right\}.
$$

The following Lemma of Musielak-Orlicz Fefferman-Stein vector-valued inequality is obtained by Obtained by \citet{Yang:2012-new}. In what follows, the space ${L^\varphi }\left( {{\ell ^p},X} \right)$ is defined to be the set of all ${{{\left\{ {{f_j}} \right\}}_{j \in \mathbb{Z}}}}$ satisfying ${\left[ {\sum\nolimits_j {{{\left| {{f_j}} \right|}^p}} } \right]^{{1 \mathord{\left/
 {\vphantom {1 p}} \right.
 \kern-\nulldelimiterspace} p}}} \in {L^\varphi }\left( X \right)$ and we let
 $$
 \setlength\abovedisplayskip{1ex}
 \setlength\belowdisplayskip{1ex}
 {\left\| {{{\left\{ {{f_j}} \right\}}_{j \in \mathbb{Z}}}} \right\|_{{L^\varphi }\left( {{\ell ^p},X} \right)}}: = {\left\| {{{\left[ {\sum\nolimits_j {{{\left| {{f_j}} \right|}^p}} } \right]}^{{1 \mathord{\left/
 {\vphantom {1 p}} \right.
 \kern-\nulldelimiterspace} p}}}} \right\|_{{L^\varphi }\left( X \right)}}.
 $$
\begin{lemma}{}\label{Lemma_Properties_of_Growth_Functions_04}
Let $p \in \left(1, \infty\right]$, $\varphi$ be a Musielak-Orlicz function with uniformly lower type $p_1$ and upper type $p_2$, and $\varphi \in \mathbb{A}_{q}\left(X\right)$ for $q\in\left(1, \infty\right)$. If $q\left(\varphi\right) < p_1 \le p_2 < \infty$, then there exists a positive constant $C$ such that, for all ${{{\left\{ {{f_j}} \right\}}_{j \in \mathbb{Z}}}} \in {L^\varphi }\left( {{\ell ^p},X} \right)$,
$$
 \setlength\abovedisplayskip{1ex}
 \setlength\belowdisplayskip{1ex}
 \int_X {\varphi \left( {x,{{\left[ {\sum\nolimits_j {\mathcal{M}\left( {{f_j}} \right){{\left( x \right)}^p}} } \right]}^{{1 \mathord{\left/
 {\vphantom {1 p}} \right.
 \kern-\nulldelimiterspace} p}}}} \right)d\mu \left( x \right)}  \le C\int_X {\varphi \left( {x,{{\left[ {\sum\nolimits_j {{{\left| {{f_j}\left( x \right)} \right|}^p}} } \right]}^{{1 \mathord{\left/
 {\vphantom {1 p}} \right.
 \kern-\nulldelimiterspace} p}}}} \right)d\mu \left( x \right)}.
$$
\end{lemma}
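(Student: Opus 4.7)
The plan is to derive the modular Musielak--Orlicz inequality from the classical weighted vector-valued Fefferman--Stein inequality, treating $\{\varphi(\cdot,t)\}_{t>0}$ as a uniformly bounded family of $A_{p_1}$-weights, and then lifting from the fixed-scale estimates to the Musielak--Orlicz modular level via the uniformly upper type $p_2$ and lower type $p_1$ properties of $\varphi$.

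First, by \lemref{Lemma_Properties_of_Growth_Functions_03}(2) I would pick $q_0\in(q(\varphi),p_1)$ so that $\varphi\in\mathbb{A}_{q_0}(X)$, and by \lemref{Lemma_Properties_of_Growth_Functions_03}(1), $\varphi\in\mathbb{A}_{p_1}(X)$. The very definition of $\mathbb{A}_{p_1}(X)$ says precisely that $w_t(x):=\varphi(x,t)$ lies in the classical $A_{p_1}$ class with Muckenhoupt constant independent of $t$. The classical weighted vector-valued Fefferman--Stein inequality (valid for any $A_{p_1}$-weight on a space of homogeneous type and any $p\in(1,\infty]$) then yields, uniformly in $t>0$,
\begin{equation*}
\int_X\Bigl(\sum_j\mathcal{M}(f_j)(x)^p\Bigr)^{p_1/p}\varphi(x,t)\,d\mu(x)\le C\int_X\Bigl(\sum_j|f_j(x)|^p\Bigr)^{p_1/p}\varphi(x,t)\,d\mu(x).
\end{equation*}
Writing $F:=(\sum_j|f_j|^p)^{1/p}$ and $G:=(\sum_j|\mathcal{M}f_j|^p)^{1/p}$, I would pass from these fixed-scale weighted $L^{p_1}$ estimates to the modular inequality via a good-$\lambda$/level-set decomposition: for each $\lambda>0$ split $f_j=f_j\chi_{\{F>\lambda\}}+f_j\chi_{\{F\le\lambda\}}$ so that $\{G>2\lambda\}$ is captured by the maximal function of either piece, apply the weighted $L^{p_1}$ bound with weight $\varphi(\cdot,\lambda)$, and use Chebyshev's inequality to bound $\varphi(\{G>\lambda\},\lambda)$ in terms of the level sets of $F$. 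Combining these distributional bounds with the layer-cake identity
\begin{equation*}
\int_X \varphi(x,G(x))\,d\mu(x)\asymp \int_0^\infty \varphi(\{G>\lambda\},\lambda)\,\frac{d\lambda}{\lambda},
\end{equation*}
which is an immediate consequence of \lemref{Lemma_Properties_of_Growth_Functions_01}, and summing dyadically in $\lambda$ under the uniformly upper type $p_2$ and lower type $p_1$ conditions, one reassembles the right-hand side to $\int_X \varphi(x,F(x))\,d\mu(x)$.

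The main obstacle is the bookkeeping in this last extrapolation step: one has to verify that the dyadic sums in $\lambda$ converge and can be controlled by a single modular integral of $F$, and this is precisely where both the lower type $p_1$ (to pass from the $L^{p_1}$ bound down to level sets without losing polynomial factors) and the upper type $p_2$ (to sum the geometric contributions on the portion where $F$ is large) are used simultaneously with the uniformity from $q(\varphi)<p_1$. An alternative, and likely the route actually taken in \citet{Yang:2012-new}, is to invoke an abstract Musielak--Orlicz extrapolation theorem of Rubio de Francia type, which packages exactly this lifting and deduces the claimed modular estimate directly from the scalar weighted bound; either way the hypotheses $q(\varphi)<p_1\le p_2<\infty$ are used to guarantee that $\varphi(\cdot,t)\in A_{p_1}$ uniformly and that the type conditions sandwich $\varphi$ tightly enough to run the interpolation.
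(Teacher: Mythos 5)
The paper does not actually prove this lemma: it is imported verbatim from \citet{Yang:2012-new}, so there is no in-paper proof to compare against. With that caveat, your strategy -- reduce to the classical weighted vector-valued Fefferman--Stein inequality with weight $\varphi(\cdot,t)\in A_{q_0}$ uniformly in $t$, then lift to the modular level via the layer-cake identity of \lemref{Lemma_Properties_of_Growth_Functions_01} and a good-$\lambda$ splitting $f_j=f_j\chi_{\{F>\lambda\}}+f_j\chi_{\{F\le\lambda\}}$ -- is the standard route, and the layer-cake reduction and the uniformity of the Muckenhoupt constants are correctly identified.

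There is, however, a concrete gap in the exponent you use. As written you display only the weighted $L^{p_1}$ Fefferman--Stein estimate and then apply Chebyshev at the same exponent $p_1$ to the ``big'' piece $G^1$, yielding $\varphi(\{G^1>\lambda\},\lambda)\lesssim\lambda^{-p_1}\int_{\{F>\lambda\}}F^{p_1}\varphi(\cdot,\lambda)\,d\mu$. After Fubini, the inner integral is $\int_0^{F(x)}\lambda^{-p_1-1}\varphi(x,\lambda)\,d\lambda$, and the uniformly lower type $p_1$ bound $\varphi(x,\lambda)\lesssim(\lambda/F)^{p_1}\varphi(x,F)$ reduces it to $F^{-p_1}\varphi(x,F)\int_0^{F}\lambda^{-1}\,d\lambda$, which diverges logarithmically. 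The fix is to run Chebyshev and Fefferman--Stein for the big piece at some $q_0\in(q(\varphi),p_1)$ strictly below $p_1$ -- available since $\varphi\in\mathbb{A}_{q_0}(X)$ by \lemref{Lemma_Properties_of_Growth_Functions_03}(2) -- so that $\int_0^{F}\lambda^{p_1-q_0-1}\,d\lambda$ is finite; dually, the small piece $G^2$ must be handled with an exponent $q_1>p_2$ (again licit since $\mathbb{A}_{q_0}\subset\mathbb{A}_{q_1}$) so that $\int_F^{\infty}\lambda^{p_2-q_1-1}\,d\lambda$ converges under the uniformly upper type $p_2$ bound. You introduce $q_0$ at the start but never use it in the decomposition, and the second exponent $q_1>p_2$ does not appear at all; the strict inequality $q(\varphi)<p_1$ exists precisely to make room for $q_0<p_1$, and $p_2<\infty$ exists precisely to make room for $q_1>p_2$.
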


\begin{corollary}{}\label{Corollary_Property_of_Growth_Functions_04}
Let $p$ and $\varphi$ be as in Lemma \ref{Lemma_Properties_of_Growth_Functions_04}, then for all $r \in \left( {0,{{{p_1}} \mathord{\left/
 {\vphantom {{{p_1}} {q\left( \varphi  \right)}}} \right.
 \kern-\nulldelimiterspace} {q\left( \varphi  \right)}}} \right)$, we have
$$
 \setlength\abovedisplayskip{1ex}
 \setlength\belowdisplayskip{1ex}
\int_X {\varphi \left( {x,{{\left[ {\sum\nolimits_j {\mathcal{M}\left( {{f_j}} \right){{\left( x \right)}^p}} } \right]}^{{1 \mathord{\left/
 {\vphantom {1 {rp}}} \right.
 \kern-\nulldelimiterspace} {rp}}}}} \right)d\mu \left( x \right)}  \le C\int_X {\varphi \left( {x,{{\left[ {\sum\nolimits_j {{{\left| {{f_j}\left( x \right)} \right|}^p}} } \right]}^{{1 \mathord{\left/
 {\vphantom {1 {rp}}} \right.
 \kern-\nulldelimiterspace} {rp}}}}} \right)d\mu \left( x \right)}.
$$
\end{corollary}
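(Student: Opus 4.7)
The plan is to reduce the claim to a direct application of Lemma~\ref{Lemma_Properties_of_Growth_Functions_04} by an exponent change on the Musielak--Orlicz function. Set $\psi(x,t):=\varphi(x,t^{1/r})$ for $(x,t)\in X\times[0,\infty)$. Since $s\mapsto s^{1/r}$ is a bijection of $[0,\infty)$, $\psi(x,\cdot)$ is an Orlicz function whenever $\varphi(x,\cdot)$ is, and $\psi(\cdot,t)$ remains measurable for every $t$, so $\psi$ is a Musielak--Orlicz function. Writing $\psi(x,st)=\varphi(x,s^{1/r}t^{1/r})$ and invoking \eqref{Eq.Property_of_Growth_Function} separately on the regimes $t^{1/r}\ge 1$ and $t^{1/r}\le 1$ shows that $\psi$ has uniformly lower type $p_1/r$ and uniformly upper type $p_2/r$.

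Next I would verify $\psi\in\mathbb{A}_{q(\varphi)}(X)$ with $q(\psi)=q(\varphi)$. This is essentially formal: the constant $\mathbb{A}_{q}(\varphi)$ in Definition~\ref{Def.A_p_RH_p} is a supremum over $t\in(0,\infty)$ and over balls $B\subset X$, and replacing $\varphi$ by $\psi$ amounts to the reparametrization $t\mapsto t^{1/r}$ of the outer supremum, which leaves the value of the supremum unchanged. Consequently the two critical indices coincide, and the hypothesis $r\in(0,p_1/q(\varphi))$ translates precisely into the inequality $q(\psi)<p_1/r$, with $p_2/r<\infty$ free. Thus $\psi$ satisfies every hypothesis of Lemma~\ref{Lemma_Properties_of_Growth_Functions_04} with the same $p\in(1,\infty]$ and with lower/upper types $p_1/r\le p_2/r$.

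Applying Lemma~\ref{Lemma_Properties_of_Growth_Functions_04} to $\psi$ and the given family $\{f_j\}_{j\in\mathbb{Z}}$ gives
$$\int_X \psi\Bigl(x,\bigl[\textstyle\sum_j \mathcal{M}(f_j)(x)^p\bigr]^{1/p}\Bigr)\,d\mu(x) \le C\int_X \psi\Bigl(x,\bigl[\textstyle\sum_j |f_j(x)|^p\bigr]^{1/p}\Bigr)\,d\mu(x).$$
Substituting $\psi(x,u)=\varphi(x,u^{1/r})$ on both sides converts the outer exponent $1/p$ into $1/(rp)$ without touching the inner vector-valued sum, which is exactly the stated inequality. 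The only step I expect to require any care is the identification $q(\psi)=q(\varphi)$ together with the transfer of the uniform types under the power change; once those are in place, the corollary is an immediate relabelling of Lemma~\ref{Lemma_Properties_of_Growth_Functions_04}.
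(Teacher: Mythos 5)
Your proof is correct and follows essentially the same route as the paper: both define the auxiliary function $\varphi(x,t^{1/r})$, transfer the uniform lower/upper types via \eqref{Eq.Property_of_Growth_Function}, observe that the critical index is preserved (so that $q < p_1/r$), and then invoke Lemma~\ref{Lemma_Properties_of_Growth_Functions_04} followed by the substitution back. You supply a bit more justification than the paper does for the invariance of $q(\cdot)$ under the power change (the reparametrization of the outer supremum in $\mathbb{A}_q$), but the argument is otherwise identical.
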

\begin{proof}{}
For any fixed $r \in \left( {0,{{{p_1}} \mathord{\left/
 {\vphantom {{{p_1}} {q\left( \varphi  \right)}}} \right.
 \kern-\nulldelimiterspace} {q\left( \varphi  \right)}}} \right)$, let $\tilde \varphi \left( {x,t} \right) = \varphi \left( {x,{t^{{1 \mathord{\left/
 {\vphantom {1 r}} \right.
 \kern-\nulldelimiterspace} r}}}} \right)$. We claim that $\tilde \varphi$ is of uniformly lower type $\frac{{p_{1}}}{r}$ and upper type $\frac{{p_{1}}}{r}$. By assumption, there exists a constant $C_{1}$, such that
 $$
 \setlength\abovedisplayskip{1ex}
 \setlength\belowdisplayskip{1ex}
 \tilde \varphi \left( {x,st} \right) = \varphi \left( {x,{s^{{1 \mathord{\left/
 {\vphantom {1 r}} \right.
 \kern-\nulldelimiterspace} r}}}{t^{{1 \mathord{\left/
 {\vphantom {1 r}} \right.
 \kern-\nulldelimiterspace} r}}}} \right) \le {C_1}{t^{{{{p_1}} \mathord{\left/
 {\vphantom {{{p_1}} r}} \right.
 \kern-\nulldelimiterspace} r}}}\varphi \left( {x,{s^{{1 \mathord{\left/
 {\vphantom {1 r}} \right.
 \kern-\nulldelimiterspace} r}}}} \right) = {C_1}{t^{{{{p_1}} \mathord{\left/
 {\vphantom {{{p_1}} r}} \right.
 \kern-\nulldelimiterspace} r}}}\tilde \varphi \left( {x,s} \right)
 $$
 for all $t \in \left[0, 1\right]$, $x \in X$ and $s \in \left[0, \infty\right)$. In the mean time, there exists another constant $C_2$, such that
 $$
 \setlength\abovedisplayskip{1ex}
 \setlength\belowdisplayskip{1ex}
 \tilde \varphi \left( {x,st} \right) = \varphi \left( {x,{s^{{1 \mathord{\left/
 {\vphantom {1 r}} \right.
 \kern-\nulldelimiterspace} r}}}{t^{{1 \mathord{\left/
 {\vphantom {1 r}} \right.
 \kern-\nulldelimiterspace} r}}}} \right) \le {C_2}{t^{{{{p_2}} \mathord{\left/
 {\vphantom {{{p_2}} r}} \right.
 \kern-\nulldelimiterspace} r}}}\varphi \left( {x,{s^{{1 \mathord{\left/
 {\vphantom {1 r}} \right.
 \kern-\nulldelimiterspace} r}}}} \right) = {C_2}{t^{{{{p_2}} \mathord{\left/
 {\vphantom {{{p_2}} r}} \right.
 \kern-\nulldelimiterspace} r}}}\tilde \varphi \left( {x,s} \right)
 $$
 for all $t \in \left[1, \infty\right)$, $x \in X$ and $s \in \left[0, \infty\right)$.

 Since $q\left(\tilde \varphi\right) = q\left(\varphi\right)$, for arbitrary $r \in \left( {0,{{{p_1}} \mathord{\left/
 {\vphantom {{{p_1}} {q\left( \varphi  \right)}}} \right.
 \kern-\nulldelimiterspace} {q\left( \varphi  \right)}}} \right)$, it is trivial that
 $$
 \setlength\abovedisplayskip{1ex}
 \setlength\belowdisplayskip{1ex}
 q\left( {\tilde \varphi } \right) = q\left( \varphi  \right) < \frac{{{p_1}}}{r} \le \frac{{{p_2}}}{r} < \infty.
 $$ By employing Lemma \ref{Lemma_Properties_of_Growth_Functions_04}, we obtain
 \begin{eqnarray*}
\int_X {\varphi \left( {x,{{\left[ {\sum\nolimits_j {\mathcal{M}\left( {{f_j}} \right){{\left( x \right)}^p}} } \right]}^{{1 \mathord{\left/
 {\vphantom {1 {rp}}} \right.
 \kern-\nulldelimiterspace} {rp}}}}} \right)d\mu \left( x \right)}  &=& \int_X {\tilde \varphi \left( {x,{{\left[ {\sum\nolimits_j {\mathcal{M}\left( {{f_j}} \right){{\left( x \right)}^p}} } \right]}^{{1 \mathord{\left/
 {\vphantom {1 p}} \right.
 \kern-\nulldelimiterspace} p}}}} \right)d\mu \left( x \right)} \\
 &\le& C\int_X {\tilde \varphi \left( {x,{{\left[ {\sum\nolimits_j {{{\left| {{f_j}\left( x \right)} \right|}^p}} } \right]}^{{1 \mathord{\left/
 {\vphantom {1 p}} \right.
 \kern-\nulldelimiterspace} p}}}} \right)d\mu \left( x \right)} \\
 &=& C\int_X {\varphi \left( {x,{{\left[ {\sum\nolimits_j {{{\left| {{f_j}\left( x \right)} \right|}^p}} } \right]}^{{1 \mathord{\left/
 {\vphantom {1 {rp}}} \right.
 \kern-\nulldelimiterspace} {rp}}}}} \right)d\mu \left( x \right)}.
 \end{eqnarray*}
\end{proof}

\subsection{\textbf{AT}$_{\mathcal{L},M}$-Family Associated with Operator $L$}
Recalling that $X$ is a space that satisfies the strong homogeneity property \eqref{Eq.Doubling_dim_of_X_01} with homogeneous dimension $n$. In the view of Lemma \ref{Lemma_Decompositon_of_Space}, there exists a collection $\left\{ {Q_\alpha ^k \subset X;k \in Z,} \right.$ $\left. {\alpha  \in {I_k}} \right\}$ of open subsets, where $I_{k}$ is the index set, such that for every $k \in \mathbb{Z}$,
$$
 \setlength\abovedisplayskip{1ex}
 \setlength\belowdisplayskip{1ex}
X = \bigcup\nolimits_{\alpha  \in {I_k}} {Q_\alpha ^k}
$$
  with properties of $Q_{\alpha}^{k}$ as in Lemma \ref{Lemma_Decompositon_of_Space}. In what follows, such open subsets $\left\{ {Q_\alpha ^k \subset X;k \in Z,} \right.$ $\left. {\alpha  \in {I_k}} \right\}$ is said to be a family of \emph{dyadic cubes} of $X$. And we now  turn to introduce the \textbf{AT}$_{\mathcal{L},M}$-family associated with an operator $L$ whose definition can also be found in \citep{Duong:2016}.
\begin{definition}\label{Def.AT_LM_Family}
Suppose that an operator $L$ satisfies (\textbf{H1}) and (\textbf{H2}) and $M \in \mathbb{N}$. A collection of functions ${\left\{ {{a_Q}} \right\}_{Q:Dyadic}}$ is said to be an \textbf{AT}$_{\mathcal{L},M}$-family associated with $L$, if for each dyadic cube $Q$, there exists a function $b_{Q}\in \mathcal{D}\left(L^{2M}\right)$ such that
\begin{enumerate}[\rm(i)]

  \item $a_Q = L^M \left(b_Q\right);$
  \item ${\mathop{\rm supp}\nolimits} \left( {{L^k}\left( {b{}_Q} \right)} \right) \subset 3Q$, $k = 0,1, \cdot  \cdot  \cdot ,2M$;
  \item ${\left( {\ell {{\left( Q \right)}^2}L} \right)^k}\left( {{b_Q}} \right) \le \ell {\left( Q \right)^{2M}}\mu {\left( Q \right)^{{{ - 1} \mathord{\left/
 {\vphantom {{ - 1} 2}} \right.
 \kern-\nulldelimiterspace} 2}}}$, $k = 0,1, \cdot  \cdot  \cdot ,2M$.
\end{enumerate}
\rm{Here, $\mathcal{D}\left(L\right)$ denotes the domain of operator $L$, and by $L^k$ the $k$-fold composition of $L$ with itself.}
\end{definition}

With this definition, we can decompose an $L^2$ function into \textbf{AT}$_{\mathcal{L},M}$-family. Given a function $f \in L^2\left(X\right)$, we say that $f$ has an \textbf{AT}$_{\mathcal{L},M}$-\emph{expansion}, if there exists sequence $s=\left\{s_{Q}\right\}_{Q:Dyadic}$, $0 \le s_Q < \infty$ and an \textbf{AT}$_{\mathcal{L}, M}$-family $\left\{a_Q\right\}_{Q:Dyadic}$ in $L^2\left(X\right)$ such that
{
 \setlength\abovedisplayskip{1ex}
 \setlength\belowdisplayskip{1ex}
\begin{equation}\label{Eq.Decomposition_L2_Function}
  f = \sum\limits_{Q:Dyadic} {{s_Q}{a_Q}}.
\end{equation}
We then define by $W_f\left(x\right)$ the function related to the sequence $s=\left\{s_{Q}\right\}_{Q:Dyadic}$, $0 \le s_Q < \infty$ as
\begin{equation}\label{Def.W_F}
  {W_f}\left( x \right): = {\left( {\sum\nolimits_{Q:Dyadic} {\mu {{\left( Q \right)}^{ - 1}}|{s_Q}{|^2}{\mathcal{X}_Q}\left( x \right)} } \right)^{{1 \mathord{\left/
 {\vphantom {1 2}} \right.
 \kern-\nulldelimiterspace} 2}}}.
\end{equation}
}
\begin{proposition}{}\label{Pro.ATLM_Family}
Given an operator $L$ that satisfies \rm(\textbf{H1})-\rm(\textbf{H2})\emph{ and $f \in L^2\left(X\right)$. Then for all $M\in \mathbb{N}$, $f$ has an} \rm{\textbf{AT}}\emph{$_{\mathcal{L}, M}$-expansion. Moreover, let $Q^{k}_{\alpha}$ and $\delta
$ be as in Lemma \ref{Lemma_Decompositon_of_Space}, we have}
$$
 \setlength\abovedisplayskip{1ex}
 \setlength\belowdisplayskip{1ex}
{s_{Q_\alpha ^k}} = {\left( {\int_{{\delta ^{k + 1}}}^{{\delta ^k}} {\int_{Q_\alpha ^k} {|{t^2}L{e^{ - {t^2}L}}f(y){|^2}d\mu \left( y \right)} \frac{{dt}}{t}} } \right)^{{1 \mathord{\left/
 {\vphantom {1 2}} \right.
 \kern-\nulldelimiterspace} 2}}}.
$$
\end{proposition}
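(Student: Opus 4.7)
The plan is to extract the AT$_{\mathcal{L},M}$-expansion from a spectrally-defined Calder\'on reproducing formula that is then discretized along the dyadic cubes of \lemref{Lemma_Decompositon_of_Space}. By (\textbf{H1}) and the spectral theorem applied to $L$, for $f \in H^2(X) = \overline{R(L)}$ one has the identity
$$
f = c_M \int_0^\infty (t^2 L)^{M+1} e^{-2t^2 L} f \, \frac{dt}{t}, \qquad c_M = \frac{2^{M+2}}{M!},
$$
since $\int_0^\infty u^{M+1} e^{-2u}\,du/u = M!/2^{M+1}$. Factor $(t^2L)^{M+1}e^{-2t^2L} = (t^2L)^M e^{-t^2L}\circ(t^2L\, e^{-t^2L})$ and pull $L^M$ outside; this brings the Lusin-area integrand $t^2 L e^{-t^2 L} f$ explicitly into the formula.

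Localize both variables next: split $(0,\infty)= \bigsqcup_{k\in\mathbb{Z}}[\delta^{k+1},\delta^k)$ and insert the partition $\sum_{\alpha\in I_k}\mathcal{X}_{Q_\alpha^k}\equiv 1$ from \lemref{Lemma_Decompositon_of_Space} into the inner semigroup. For each dyadic cube $Q = Q_\alpha^k$, set
$$
\tilde b_Q(x) := c_M \int_{\delta^{k+1}}^{\delta^k} t^{2M}\, e^{-t^2 L}\!\left[\mathcal{X}_Q \cdot t^2 L e^{-t^2L} f\right]\!(x)\,\frac{dt}{t},
$$
together with $s_Q$ as in the statement, $b_Q := \tilde b_Q/s_Q$ and $a_Q := L^M b_Q$. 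The reproducing identity then becomes $f = \sum_{Q} s_Q a_Q$, which proves both the expansion and the stated formula for $s_{Q_\alpha^k}$. Condition (i) of Definition \ref{Def.AT_LM_Family} is automatic. Condition (iii), the pointwise bound $|(\ell(Q)^2 L)^k b_Q| \le \ell(Q)^{2M}\mu(Q)^{-1/2}$ for $k = 0,1,\ldots,2M$, follows by Cauchy--Schwarz in the $t$-integral together with the Gaussian kernel estimate for $(t^2L)^j e^{-t^2 L}$ (a standard consequence of (\textbf{H2}) and analyticity of the semigroup); the very definition of $s_Q$ cancels the $L^2$-norm produced by Cauchy--Schwarz.

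The main obstacle is the strict support requirement (ii), $\mathrm{supp}(L^k b_Q)\subset 3Q$, because $e^{-t^2L}$ has only Gaussian off-diagonal decay and is not compactly supported. To overcome this, replace $e^{-t^2 L}$ in the reproducing formula by a compactly-supported functional calculus derived from the finite propagation speed of the wave operator $\cos(s\sqrt{L})$, which is available whenever $L$ is nonnegative self-adjoint with Gaussian heat kernel. Using the Fourier-inversion identity
$$
e^{-u^2}=\frac{1}{2\sqrt{\pi}}\int_{-\infty}^{\infty} e^{-s^2/4}\cos(su)\,ds,
$$
one expresses $e^{-t^2L}$ as a superposition of wave operators and truncates at $|s|$ of order $\ell(Q)/t$: the truncated operator has integral kernel supported in $\{d(x,y)\le \ell(Q)\}$, so that for $t\in[\delta^{k+1},\delta^k]$ every iterate $L^k b_Q$ is confined to $3Q$, while the Gaussian tail in $s$ contributes only an acceptable multiplicative constant that is absorbed into $c_M$. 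This delivers conditions (i)--(iii) simultaneously and completes the proof.
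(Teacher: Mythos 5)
Your proposal correctly identifies the overall strategy (a Calder\'on reproducing formula factored as $L^M$ times something, localized in $(t,x)$ to dyadic cubes, with $s_Q$ chosen to be the local $L^2$-norm so that the Cauchy--Schwarz step in condition (iii) self-normalizes). The constant $c_M$ and the algebra of the factorization are fine, and the paper itself simply cites \citep[Proof of Theorem 3.2]{Duong:2016} rather than giving an argument, so the standard of comparison is that construction.

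However, there is a genuine gap in how you handle the support condition (ii), and it is the crux of the whole proposition. You keep $e^{-t^2L}$ in the reproducing formula and then propose to truncate its wave-operator representation at $|s|\lesssim \ell(Q)/t$, asserting that ``the Gaussian tail in $s$ contributes only an acceptable multiplicative constant that is absorbed into $c_M$.'' That is not what happens: truncating the $s$-integral in $e^{-u^2}=\frac{1}{2\sqrt\pi}\int e^{-s^2/4}\cos(su)\,ds$ produces an \emph{additive} remainder operator $R_t(\sqrt{L})$, not a multiplicative factor. That remainder has no compact propagation, so $b_Q$ defined with the truncated piece plus remainder does not satisfy $\mathrm{supp}(L^k b_Q)\subset 3Q$; and if you simply drop the remainder, you have destroyed the reproducing identity, so $\sum_Q s_Q a_Q\neq f$. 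There is no way to fix both defects at once by absorbing something into $c_M$. The correct route (which is the one taken in \citep{Duong:2016}) is to build the reproducing formula \emph{from the start} with a function $\Psi$ of the form $\Psi(\lambda)=\lambda^{2M}\tilde\Psi(\lambda)$, where $\tilde\Psi$ is the Fourier transform of a smooth even function compactly supported in $(-1,1)$, normalized so that $\int_0^\infty \Psi(t\sqrt{\lambda})\,(t^2\lambda)e^{-t^2\lambda}\,\frac{dt}{t}=1$ for $\lambda>0$. Then $\tilde\Psi(t\sqrt{L})$ has kernel supported exactly in $\{d(x,y)\le t\}$ by finite propagation speed, the identity $f=\sum_Q s_Q a_Q$ holds with no remainder, and for $t\in[\delta^{k+1},\delta^k]$ and $Q=Q^k_\alpha$ the support of every $L^j b_Q$ lands inside $3Q$. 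Your Cauchy--Schwarz verification of (iii) then goes through essentially as you sketched it, using the Gaussian kernel bound for $(t^2L)^j\tilde\Psi(t\sqrt{L})$, which follows from \eqref{Eq.GE} and the compact support of $\hat{\tilde\Psi}$.

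A secondary point: the expansion can only reproduce the part of $f$ in $\overline{R(L)}=H^2(X)$, since $t^2Le^{-t^2L}$ annihilates $\ker L$; you note this implicitly by writing $f\in H^2(X)$, which is the intended reading of the statement even though it says $L^2(X)$.
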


The proof of Proposition \ref{Pro.ATLM_Family} can be found in \citep[Proof of Theorem 3.2]{Duong:2016}. We omit the details.
\section{Musielark-Orlicz Hardy Space ${H_{\varphi, L}}$ and its Equivalent Characterization}
In this section, we begin to study the Musielar-Orlicz-Hardy spce, and in what follows, we always assume the operator $L$ satisfies \rm{(\textbf{H1})} and \rm{(\textbf{H2})}, and $\varphi$ is a growth function which is defined in Definition \ref{Def.Growth_Fun}. With some basic notations set forth in Sect. 2, we first establish the following characterization for the Hardy space $H_{\varphi, L}$.
\begin{theorem}{}\label{Theorem_3_2}
  Suppose $L$ is an operator that satisfies \rm{(\textbf{H1})} \emph{and} \rm{(\textbf{H2})}. \emph{Let $\varphi$ be a growth function with uniformly lower type $p_1$ and $f \in {H_{\varphi, L}}\left( X \right) \cap {L^2}\left( X \right)$, then for all natural number $M > {{nq\left( \varphi  \right)} \mathord{\left/
 {\vphantom {{nq\left( \varphi  \right)} {\left( {2{p_1}} \right)}}} \right.
 \kern-\nulldelimiterspace} {\left( {2{p_1}} \right)}}$, $f$ has an \textbf{AT}$_{\mathcal{L}, M}$-expansion such that}
    $$
    \setlength\abovedisplayskip{1ex}  
    \setlength\belowdisplayskip{1ex}
    {\left\| f \right\|_{{H_{\varphi, L }}\left(X\right)}} \cong {\left\| {{W_f}} \right\|_{{L^\varphi }\left(X\right)}}.$$
\end{theorem}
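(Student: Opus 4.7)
The existence of the $\mathbf{AT}_{\mathcal{L},M}$-expansion with the stated coefficients $s_{Q_\alpha^k}$ is immediate from Proposition~\ref{Pro.ATLM_Family} applied to $f\in L^2(X)$. What is at stake is the two-sided norm equivalence, which I plan to establish as two separate inequalities, treated in very different ways.

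\emph{Direction 1:} $\|W_f\|_{L^\varphi}\lesssim \|f\|_{H_{\varphi,L}}$. For each $k\in\mathbb{Z}$, let $Q_{\alpha(x,k)}^k$ denote the unique dyadic cube at level $k$ containing $x$. By Lemma~\ref{Lemma_Decompositon_of_Space}(iv) one has $\mathrm{diam}(Q_{\alpha(x,k)}^k)\le C_1\delta^k$, so for $t\in[\delta^{k+1},\delta^k]$ and $y\in Q_{\alpha(x,k)}^k$ one has $d(x,y)<ct$ with $c:=C_1/\delta^2$. Lemma~\ref{Lemma_Decompositon_of_Space}(v) together with \eqref{Eq.Property_of_Homogeneous_Type} gives $\mu(Q_{\alpha(x,k)}^k)\cong \mu(B(x,ct))$ in that range. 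Inserting the formula for $s_{Q_\alpha^k}$ from Proposition~\ref{Pro.ATLM_Family} and summing in $k$ yields the pointwise bound $W_f(x)\lesssim S_L^{(c)}(f)(x)$, where $S_L^{(c)}$ is the area function with aperture $c$ in place of $1$. A standard change-of-aperture argument dominates $S_L^{(c)}(f)(x)$ by $[\mathcal{M}(S_L(f)^r)(x)]^{1/r}$ for any $r>0$; choosing $r<p_1/q(\varphi)$ and invoking Corollary~\ref{Corollary_Property_of_Growth_Functions_04} completes this direction.

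\emph{Direction 2:} $\|f\|_{H_{\varphi,L}}\lesssim \|W_f\|_{L^\varphi}$. Writing $a_Q=L^M b_Q$, one has
\[
t^2Le^{-t^2L}a_Q=\bigl(\ell(Q)/t\bigr)^{2M}(t^2L)^{M+1}e^{-t^2L}\bigl(\ell(Q)^{-2M}b_Q\bigr).
\]
The kernel of $(t^2L)^{M+1}e^{-t^2L}$ inherits Gaussian-type bounds from (\textbf{H2}) through the spectral functional calculus. Combined with support condition (ii) and the $L^2$-size bound (iii) on $b_Q$, this gives, for every $N>0$, the off-diagonal estimate
\[
|t^2Le^{-t^2L}a_Q(y)|\lesssim \Bigl(\min\bigl(\tfrac{t}{\ell(Q)},\tfrac{\ell(Q)}{t}\bigr)\Bigr)^{2M}\mu(Q)^{1/2}V(y,t+\ell(Q))^{-1}\Bigl(1+\tfrac{d(y,Q)}{t+\ell(Q)}\Bigr)^{-N}.
\]
Plugging the expansion $f=\sum_Q s_Q a_Q$ into $S_L(f)(x)^2$, applying Cauchy--Schwarz across the sum in $Q$ at each $t$-scale (weighted by the geometric factor $(\min(t/\ell(Q),\ell(Q)/t))^{2M}$), and using \eqref{Eq.Doubling_dim_of_X_02} to transfer volumes, I expect a pointwise bound of the form
\[
S_L(f)(x)\lesssim\Biggl[\sum_Q \mu(Q)^{-1}|s_Q|^2\bigl[\mathcal{M}(\mathcal{X}_Q)(x)\bigr]^{2\theta}\Biggr]^{1/2}
\]
for some $\theta>q(\varphi)/p_1$ afforded by the gap $2M>nq(\varphi)/p_1$. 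Applying Corollary~\ref{Corollary_Property_of_Growth_Functions_04} with exponent $p=2$ and $r=1/\theta$ then upgrades this pointwise inequality to the desired $L^\varphi$ control by $\|W_f\|_{L^\varphi}$.

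\emph{Main obstacle.} The principal difficulty is Direction~2: the $x$-dependence of $\varphi$ forbids direct duality or $L^p$ tricks, and forces a pointwise domination of $S_L(f)$ by a vector-valued maximal expression in order to invoke Corollary~\ref{Corollary_Property_of_Growth_Functions_04}. The precise role of the hypothesis $M>nq(\varphi)/(2p_1)$ is to balance the off-diagonal decay $(\ell(Q)/t)^{2M}$ against the doubling volume growth $(t/\ell(Q))^n$ across all distant scales, while still leaving a margin $p_1/(r q(\varphi))>1$ large enough to invoke the Fefferman--Stein inequality. If this margin closes, the geometric series over scales diverges and no $L^\varphi$ control is obtainable. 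A secondary technical point is organizing the double sum (over cubes and over the $t$-integration) so that the Cauchy--Schwarz step does not lose more than logarithmically many scales; this is handled by pairing each $t\in[\delta^{k+1},\delta^k]$ with cubes of comparable generation and summing the resulting tails as a geometric series in $|k-j|$.
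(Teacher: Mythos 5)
The structure of your argument (pointwise control of $W_f$ by a wide-aperture area function in one direction; off-diagonal kernel estimates for $\mathbf{AT}_{\mathcal{L},M}$ building blocks plus a maximal-function domination in the other) is the same as the paper's, and Direction~2 is essentially the paper's argument modulo how one organizes the Cauchy--Schwarz across scales: you use $\mathcal{M}(\mathcal{X}_Q)^{2\theta}$ while the paper applies the Frazier--Jawerth summation lemma (Lemma~\ref{Lemma_3_2}) followed by the convexity identity~\eqref{Proof_of_Theorem_3_2_06}; both are serviceable and lead to Corollary~\ref{Corollary_Property_of_Growth_Functions_04}.

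The genuine gap is in Direction~1. You assert that ``a standard change-of-aperture argument dominates $S_L^{(c)}(f)(x)$ by $[\mathcal{M}(S_L(f)^r)(x)]^{1/r}$ for any $r>0$.'' No such pointwise bound holds for the Lusin area function: widening the aperture cannot be absorbed into a pointwise Hardy--Littlewood maximal operator applied to the narrow-aperture area function, because $S_L^{(c)}(f)(x)$ integrates over all scales $t$ simultaneously and cannot be reconstructed from a single average of $S_L(f)^r$ over a ball. The known change-of-aperture results are distributional in nature (good-$\lambda$ or tent-space arguments in the spirit of Aguilera--Segovia and Coifman--Meyer--Stein), and that is exactly what the paper develops: Lemma~\ref{Lemma_3_4} (geometry of $\mathcal{R}_\nu(F^*)$ versus $\mathcal{R}(F)$), Lemma~\ref{Lemma_3_5} (a weighted $L^2$ comparison on $F^*$ versus $F$), and Lemma~\ref{Lemma_3_6} (integrating the distribution functions against $\varphi$, where the $\mathbb{A}_q$ hypothesis enters through Lemma~\ref{Lemma_Properties_of_Growth_Functions_03}(iii) and (iv)). Your version cannot simply invoke Corollary~\ref{Corollary_Property_of_Growth_Functions_04} on a nonexistent pointwise inequality; you need either to reproduce the good-$\lambda$ machinery of Lemmas~\ref{Lemma_3_4}--\ref{Lemma_3_6}, or cite a Musielak--Orlicz change-of-aperture theorem directly, which is precisely what Lemma~\ref{Lemma_3_6} supplies. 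As written, Direction~1 does not close.
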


Before we prove Theorem \ref{Theorem_3_2}, we need to introduce some notions and establish some results as follows.

For any $v \in \left(0, \infty\right)$ and $x \in X$, let ${\Gamma _\nu }\left( x \right): = \left\{ {\left( {y,t} \right) \in X \times \left( {0,\infty } \right);d\left( {x,y} \right) < \nu t} \right\}$ be the \emph{cone of aperture} $\nu$ \emph{with vertex} $x \in X$. For any closed subset $F$ of $X$, denote by $\mathcal{R}_\nu \left(F\right)$ the \emph{union of all cones with vertices in} $F$, i.e., ${\mathcal{R}_\nu }\left( F \right) = \bigcup\nolimits_{x \in F} {{\Gamma _\nu }\left( x \right)} $. In what follows, we denote $\Gamma_1\left(x\right)$ and $\mathcal{R}_1\left(F\right)$ simply by $\Gamma\left(x\right)$ and $\mathcal{R}\left(F\right)$, respectively. For any open subset $O$ of $X$, we establish the following geometric property of $\mathcal{R}\left({O^\complement}\right)$ which generalizes a similar result obtained by \citet[Lemma 1]{Nestor:1977} in the case of Euclidean space.
\begin{lemma}{}\label{Lemma_3_4}
Suppose that $\left(X, d, \mu\right)$ is a space of homogeneous type with constant $C_D > 1$ such that \eqref{Eq.Property_of_Homogeneous_Type} holds. Let $O$ be an open subset of $X$ , $F = O^\complement$ and $\mathcal{X}_O$ its characteristic function. If for $\nu > 1$, we define ${O^ * }$ as
$$
\setlength\abovedisplayskip{1ex}  
\setlength\belowdisplayskip{1ex}
{O^ * }: = \left\{ {x \in X;\mathcal{M}\left( {{\mathcal{X}_O}} \right)\left( x \right) > {{\left( {4\nu } \right)}^{ - 2{{\log }_2}{C_D}}}} \right\}
$$
and let $F^* = (O^*)^\complement$, then we have
\begin{enumerate}[\rm(i)]\setlength{\itemsep}{-1ex}
  \item $\mathcal{R}_\nu\left(F^*\right)$ is contained in $\mathcal{R}\left(F\right)$.
  \item If $\left(z, t\right) \in \mathcal{R}_\nu\left(F^*\right)$, then there exists some constant $C_\nu$ such that
  $$
  \setlength\abovedisplayskip{1ex}  
  \setlength\belowdisplayskip{1ex}
  V\left(z, t\right) < C_\nu \mu\left(B\left(z, t\right) \cap F\right).
  $$
\end{enumerate}
\end{lemma}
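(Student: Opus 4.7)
The plan is to prove both parts by contradiction/direct estimate using the maximal-function definition of $O^{*}$ together with the strong homogeneity estimate \eqref{Eq.Doubling_dim_of_X_02}. Throughout, I will write $m := \log_{2} C_{D}$, so the threshold defining $O^{*}$ reads $(4\nu)^{-2m}$.

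For part (i), I take $(y,t)\in\mathcal{R}_{\nu}(F^{*})$, so there exists $x\in F^{*}$ with $d(x,y)<\nu t$, and I want $B(y,t)\cap F\neq\emptyset$. Arguing by contradiction, assume $B(y,t)\subset O$. Because $\nu>1$, $B(y,t)\subset B(x,2\nu t)$, hence
\[
\mathcal{M}(\mathcal{X}_{O})(x)\;\ge\;\frac{\mu(B(x,2\nu t)\cap O)}{V(x,2\nu t)}\;\ge\;\frac{V(y,t)}{V(x,2\nu t)}.
\]
Now I apply \eqref{Eq.Doubling_dim_of_X_02} with $R=2\nu t$ and $r=t$ to get $V(x,2\nu t)\le C_{D}(3\nu)^{m}V(y,t)\le C_{D}(4\nu)^{m}V(y,t)$. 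Combining, $\mathcal{M}(\mathcal{X}_{O})(x)\ge C_{D}^{-1}(4\nu)^{-m}$, and since $\nu>1$ forces $(4\nu)^{m}>C_{D}$, this exceeds $(4\nu)^{-2m}$. That contradicts $x\in F^{*}$, so $B(y,t)\cap F\neq\emptyset$ and $(y,t)\in\mathcal{R}(F)$.

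For part (ii), I reuse the same ball configuration: given $(z,t)\in\mathcal{R}_{\nu}(F^{*})$, pick $x\in F^{*}$ with $d(x,z)<\nu t$, so $B(z,t)\subset B(x,2\nu t)$. From $x\in F^{*}$ I have $\mu(B(x,2\nu t)\cap O)\le (4\nu)^{-2m}V(x,2\nu t)$, and \eqref{Eq.Doubling_dim_of_X_02} again yields $V(x,2\nu t)\le C_{D}(4\nu)^{m}V(z,t)$. Chaining these,
\[
\mu(B(z,t)\cap O)\;\le\;\mu(B(x,2\nu t)\cap O)\;\le\;C_{D}(4\nu)^{-m}V(z,t)\;\le\;C_{D}\cdot 4^{-m}V(z,t)\;=\;C_{D}^{-1}V(z,t),
\]
where the last equality uses $4^{-m}=C_{D}^{-2}$. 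Subtracting from $V(z,t)=\mu(B(z,t)\cap O)+\mu(B(z,t)\cap F)$ gives $\mu(B(z,t)\cap F)\ge (1-C_{D}^{-1})V(z,t)$, so the claim holds with $C_{\nu}$ any constant strictly larger than $C_{D}/(C_{D}-1)$ (the strict inequality is recovered by this slack).

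The only real point of care will be bookkeeping the exponents of $(4\nu)$: the threshold $(4\nu)^{-2m}$ looks ad hoc, but the proof makes clear why the square is there, namely one factor $(4\nu)^{m}$ is consumed by the doubling blow-up from $V(z,t)$ to $V(x,2\nu t)$ while the remaining factor $(4\nu)^{-m}$ must still be small enough (using $\nu>1$ and $C_{D}>1$) to beat the $C_{D}$ coming from \eqref{Eq.Doubling_dim_of_X_02}. No deeper input beyond the strong homogeneity bound and the definition of $O^{*}$ is needed.
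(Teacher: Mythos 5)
Your proof is correct, and it takes a slightly different and arguably cleaner route than the paper's. For part (i) the paper argues directly: given $(z,t)\in\mathcal{R}_\nu(F^*)$ with $z\in O$, it introduces $\delta=d(z,F)$, bounds $V(z,\delta)$ against $V(y,r)$ with $r=\delta+d(z,y)$ using the $F^*$ threshold plus two applications of \eqref{Eq.Doubling_dim_of_X_02}, and deduces $\delta\le r/(2\nu)$, which then forces $\delta<t$. You instead argue by contradiction: if $B(y,t)\subset O$, then $B(y,t)\subset B(x,2\nu t)$ gives a lower bound $\mathcal{M}(\mathcal{X}_O)(x)\ge C_D^{-1}(4\nu)^{-m}>(4\nu)^{-2m}$, contradicting $x\in F^*$. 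This is more direct and avoids the auxiliary quantity $\delta$. For part (ii) the two proofs are closer in spirit; the paper compares $B(z,t)$ to $B(y,(1+\nu)t)$ and ends with the $\nu$-dependent constant $C_\nu=[1-((1+\nu)/(2\nu))^{2m}]^{-1}$ (which degenerates as $\nu\to1^+$), whereas you compare to $B(x,2\nu t)$ and get the uniform bound $\mu(B(z,t)\cap O)<C_D^{-1}V(z,t)$, hence a $\nu$-independent constant $C_D/(C_D-1)$. One small stylistic note: the strictness in (ii) already follows from your own estimate, since $\nu>1$ gives $(4\nu)^{-m}<4^{-m}=C_D^{-2}$ strictly, so there is no need to enlarge $C_\nu$ to ``recover'' the strict inequality. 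Otherwise the exponent bookkeeping is right and the use of \eqref{Eq.Doubling_dim_of_X_02} is valid (you apply it with $R=2\nu t\ge r=t$ as required).
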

\begin{proof}{}
 The lemma is trivial if $\mathcal{R}_\nu\left(F^*\right)=\varnothing$. We then with no loss of generality assume that $\mathcal{R}_\nu\left(F^*\right) \ne \varnothing$, which implies that $O \ne X$. We then first prove (i). If $\left(z, t\right)\in \mathcal{R}_\nu\left(F^*\right)$, then either $z \in F$ or $z \in O$. In the first case it is apparent that $\left(z, t\right)\in \mathcal{R}\left(F\right)$, since $d\left(z, z\right) = 0 < t$.

 If we are in the second case, i.e., $z\in O$, let $\delta$ be the distance from $z$ to the closed and non-empty set $F$. This number $\delta$ is positive and finite, and $B\left(z, \delta\right)$ is contained in $O$. The assumption that $\left(z, t\right)\in \mathcal{R}_\nu\left(F^*\right)$ implies that there is $y \in F^*$ with $d\left(z, y\right) <\nu t$. Thus, writing $r = \delta + d\left(z, y\right)$, we get $B\left(z, \delta\right) \subset B\left(y, r\right)$ and also
 $$
  \setlength\abovedisplayskip{1ex}  
  \setlength\belowdisplayskip{1ex}
  B\left( {z,\delta } \right) \subset B\left( {z,\delta } \right) \cap O \subset B\left( {y,r} \right) \cap O,
 $$
 which together with the definition of $O^*$, implies that
 $$
  \setlength\abovedisplayskip{1ex}  
  \setlength\belowdisplayskip{1ex}
 V\left( {z,\delta } \right) \le \mu \left( {B\left( {y,r} \right) \cap O} \right) \le {\left( {4\nu } \right)^{ - 2{{\log }_2}{C_D}}}V\left( {y,r} \right)
 $$
 since $y \in F^*$.

 By using \eqref{Eq.Doubling_dim_of_X_02} twice, we also have
 {
 \setlength\abovedisplayskip{1ex}  
 \setlength\belowdisplayskip{1ex}
 \begin{align*}
V\left( {y,r} \right) &\le {C_D}{\left( {r{\delta ^{ - 1}}} \right)^{{{\log }_2}{C_D}}}V\left( {y,\delta } \right)\\
 &\le C_D^2{\left( {r{\delta ^{ - 1}}} \right)^{{{\log }_2}{C_D}}}{\left( {1 + {\delta ^{ - 1}}d\left( {y,z} \right)} \right)^{{{\log }_2}{C_D}}}V\left( {z,\delta } \right)\\
 &= {\left( {2r{\delta ^{ - 1}}} \right)^{2{{\log }_2}{C_D}}}V\left( {z,\delta } \right).
 \end{align*}
 From these inequalities, we get that
 }
 $$
 \setlength\abovedisplayskip{1ex}  
 \setlength\belowdisplayskip{1ex}
 \delta \le \frac{r}{{2\nu }}.
 $$
 Recalling that $r = \delta + d\left(z, y\right)$ and $d\left(z, y\right)<\nu t$, we obtain
 $$
 \setlength\abovedisplayskip{1ex}  
 \setlength\belowdisplayskip{1ex}
 \delta  \le \frac{{\delta  + d\left( {z,y} \right)}}{{2\nu }} < \frac{{\delta  + \nu t}}{{2\nu }}
 $$
 and since $\nu > 1$, it follows that $\delta < t$. Then by the very definition of $\delta$, there exists an $x \in F$, satisfying $d\left(x, z\right) < t$, which means that $\left(z, t\right)\in \mathcal{R}\left(F\right)$. This proves (i).

 We then turn our attention to (ii). If $\left(z, t\right) \in \mathcal{R}_\nu \left(F^*\right)$, there is $y \in F^*$ such that $d\left(z, y\right) < \nu t$. Then $B\left(z, t\right) \subset B\left(y, \left(1 + \nu\right)t\right)$ and since $y \in F^*$, we get
 $$
 \setlength\abovedisplayskip{1ex}  
 \setlength\belowdisplayskip{1ex}
 \mu \left( {B\left( {z,t} \right) \cap O} \right) \le \mu \left( {B\left( {y,\left( {1 + \nu } \right)t} \right) \cap O} \right) \le {\left( {4\nu } \right)^{ - 2{{\log }_2}{C_D}}}V\left( {y,\left( {1 + \nu } \right)t} \right),
 $$
 and therefore
 {
 \setlength\abovedisplayskip{1ex}  
 \setlength\belowdisplayskip{1ex}
 \begin{align*}
\mu \left( {B\left( {z,t} \right) \cap O} \right) &\le {\left( {4\nu } \right)^{ - 2{{\log }_2}{C_D}}}{C_D}{\left( {1 + \nu } \right)^{{{\log }_2}{C_D}}}V\left( {y,t} \right)\\
 &\le C_D^2{\left( {4\nu } \right)^{ - 2{{\log }_2}{C_D}}}{\left( {1 + \nu } \right)^{{{\log }_2}{C_D}}}{\left( {1 + {t^{ - 1}}d\left( {y,z} \right)} \right)^{{{\log }_2}{C_D}}}V\left( {z,t} \right)\\
 &< {\left( {\frac{{1 + \nu }}{{2\nu }}} \right)^{2{{\log }_2}{C_D}}}V\left( {z,t} \right).
 \end{align*}
 Now from $V\left( {z,t} \right) = \mu \left( {B\left( {z,t} \right) \cap O} \right) + \mu \left( {B\left( {z,t} \right) \cap F} \right)$, we obtain
 }
 $$
 \setlength\abovedisplayskip{1ex}  
 \setlength\belowdisplayskip{1ex}
 \left[ {1 - {{\left( {\frac{{1 + \nu }}{{2\nu }}} \right)}^{2{{\log }_2}{C_D}}}} \right]V\left( {z,t} \right) < \mu \left( {B\left( {z,t} \right) \cap F} \right),
 $$
 which implies (ii).

\end{proof}

Next we introduce the following variant of Lusin-area function associated with $L$. For all $\nu \in \left(0, \infty\right)$, $f \in L^2\left(X\right)$ and $x \in X$, let
$$
 \setlength\abovedisplayskip{1ex}  
 \setlength\belowdisplayskip{1ex}
 {S_{L,\nu }}\left( f \right)\left( x \right): = {\left( {\int_0^\infty  {\int_{d\left( {x,y} \right) < \nu t} {|{t^2}L{e^{ - {t^2}L}}\left( f \right)(y){|^2}\frac{{d\mu \left( y \right)}}{{V\left( {x,t} \right)}}} \frac{{dt}}{t}} } \right)^{{1 \mathord{\left/
 {\vphantom {1 2}} \right.
 \kern-\nulldelimiterspace} 2}}}.
$$
We also have the following two Lemmas for the variant of Lusin-area function that associated with $L$ which generalize the results of \citet[Lemma 2]{Nestor:1977} and \citet[Lemma 3.3.5]{Yang:2017}.
\begin{lemma}{}\label{Lemma_3_5}
 Assume that L satisfies \rm{(\textbf{H1})} and \rm{(\textbf{H2})}. \emph{Let $\varphi \in \mathbb{A}_p\left(X\right)$, $1\le p < \infty$, and O be an open subset of $X$. If $O^*$ is the set associated to $O$ as in Lemma \ref{Lemma_3_4} with some $\nu > 1$, then there exists a finite constant $C$, which is independent of $O$, such that for all $\lambda \in \left(0, \infty\right)$ and $f \in L^2\left(X\right)$ },
 $$
 \setlength\abovedisplayskip{1ex}  
 \setlength\belowdisplayskip{1ex}
 \int_{{F^*}} {|{S_{L,\nu }}\left( f \right)(x){|^2}\varphi \left( {x,\lambda } \right)d\mu \left( x \right)}  \le C\int_F {|{S_L}\left( f \right)(x){|^2}\varphi \left( {x,\lambda } \right)d\mu \left( x \right)},
 $$
 where $F^* = \left(O^*\right)^\complement$ and $F = O^\complement$.
\end{lemma}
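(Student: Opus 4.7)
The plan is to prove the pointwise inequality between integrands after expanding both sides via Fubini and comparing the resulting kernels. First I would write
\begin{align*}
\int_{F^*}|S_{L,\nu}(f)(x)|^{2}\varphi(x,\lambda)\,d\mu(x)
&=\int_{0}^{\infty}\!\!\int_{X}|t^{2}Le^{-t^{2}L}f(y)|^{2}\,K_{\nu}(y,t)\,\frac{d\mu(y)\,dt}{t},\\
\int_{F}|S_{L}(f)(x)|^{2}\varphi(x,\lambda)\,d\mu(x)
&=\int_{0}^{\infty}\!\!\int_{X}|t^{2}Le^{-t^{2}L}f(y)|^{2}\,K(y,t)\,\frac{d\mu(y)\,dt}{t},
\end{align*}
where $K_{\nu}(y,t):=\int_{F^{*}\cap B(y,\nu t)}\frac{\varphi(x,\lambda)}{V(x,t)}\,d\mu(x)$ and $K(y,t):=\int_{F\cap B(y,t)}\frac{\varphi(x,\lambda)}{V(x,t)}\,d\mu(x)$. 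It then suffices to show the pointwise comparison $K_{\nu}(y,t)\le C\,K(y,t)$ for all $(y,t)\in X\times(0,\infty)$, with a constant $C$ independent of $O$.

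Observe that $K_{\nu}(y,t)=0$ unless $F^{*}\cap B(y,\nu t)\neq\varnothing$, i.e.\ unless $(y,t)\in\mathcal{R}_{\nu}(F^{*})$. For such $(y,t)$, I would first use the doubling condition \eqref{Eq.Doubling_dim_of_X_02} to replace $V(x,t)$ by $V(y,t)$ up to a constant depending only on $\nu$ (since $d(x,y)<\nu t$), obtaining
$$
K_{\nu}(y,t)\le\frac{C_{\nu}}{V(y,t)}\,\varphi\bigl(B(y,\nu t),\lambda\bigr).
$$
Next I would invoke the geometric property Lemma~\ref{Lemma_3_4}(ii), which because $(y,t)\in\mathcal{R}_{\nu}(F^{*})$ yields $V(y,t)\le C_{\nu}\,\mu(B(y,t)\cap F)$.

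To replace the ball $B(y,\nu t)$ in the $\varphi$-measure by the smaller set $B(y,t)\cap F$, I would apply part~(4) of Lemma~\ref{Lemma_Properties_of_Growth_Functions_03} with $E=B(y,t)\cap F\subset B(y,\nu t)=B$ (valid since $\nu>1$):
$$
\varphi\bigl(B(y,\nu t),\lambda\bigr)\le C\left[\frac{V(y,\nu t)}{\mu(B(y,t)\cap F)}\right]^{p}\varphi\bigl(B(y,t)\cap F,\lambda\bigr).
$$
Using $\mu(B(y,t)\cap F)\gtrsim V(y,t)$ from Lemma~\ref{Lemma_3_4}(ii) together with the doubling bound $V(y,\nu t)\le C V(y,t)$, the bracketed ratio is bounded by a constant depending only on $\nu$ and $C_{D}$. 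Reapplying doubling to restore $V(x,t)$ inside the integral over $B(y,t)\cap F$ then yields $K_{\nu}(y,t)\le C\,K(y,t)$, and reversing Fubini produces the claimed inequality.

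The main obstacle I anticipate is exactly the step where the integration set on the left ($B(y,\nu t)$, intersected with $F^{*}$) is bigger than the one on the right ($B(y,t)\cap F$): shrinking the ball radius from $\nu t$ to $t$ and replacing $F^{*}$ by $F$ looks like a loss, but Lemma~\ref{Lemma_3_4}(ii) guarantees that within $\mathcal{R}_{\nu}(F^{*})$ the set $B(y,t)\cap F$ already carries a fixed fraction of $V(y,t)$, so the $\mathbb{A}_{p}$ inequality absorbs the geometric discrepancy. Everything else is bookkeeping with the doubling condition and Fubini, and the constant $C$ that emerges depends only on $\nu$, $C_{D}$, $p$, and the $\mathbb{A}_{p}$ constant of $\varphi$, hence is independent of the open set $O$ as required.
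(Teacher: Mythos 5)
Your proposal is correct and follows essentially the same route as the paper's proof: Fubini to pass to an integral over the tent region, the doubling inequality \eqref{Eq.Doubling_dim_of_X_02} to trade $V(x,t)$ for $V(y,t)$, part (ii) of Lemma~\ref{Lemma_3_4} to control $V(y,t)$ by $\mu(B(y,t)\cap F)$, and Lemma~\ref{Lemma_Properties_of_Growth_Functions_03}(4) to transfer the $\varphi$-measure from $B(y,\nu t)$ to $B(y,t)\cap F$. The only cosmetic difference is that you package the argument as a single pointwise comparison of the two kernels $K_\nu$ and $K$ (and apply the $\mathbb{A}_p$ inequality once, from $B(y,t)\cap F$ directly up to $B(y,\nu t)$), whereas the paper applies Lemma~\ref{Lemma_Properties_of_Growth_Functions_03}(4) in two steps and invokes Lemma~\ref{Lemma_3_4}(i) explicitly to shrink $\mathcal{R}_\nu(F^*)$ to $\mathcal{R}(F)$ before reversing Fubini.
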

\begin{proof}{}
For any $x \in F^*$, $\left(y, t\right)\in \Gamma_\nu \left(x\right)$, we observe that $d\left(x, y\right) < \nu t$, and hence by \eqref{Eq.Doubling_dim_of_X_02},
{
\begin{align*}
V{\left( {x,t} \right)^{ - 1}} &\le {C_D}{\left( {1 + {t^{ - 1}}d\left( {x,y} \right)} \right)^{{{\log }_2}{C_D}}}V{\left( {y,t} \right)^{ - 1}}\\
 &< {C_D}{\left( {1 + \nu } \right)^{{{\log }_2}{C_D}}}V{\left( {y,t} \right)^{ - 1}}.
\end{align*}
It follows that
}
{
 \setlength\abovedisplayskip{1ex}  
 \setlength\belowdisplayskip{1ex}
 \begin{align}\label{Eq.Proof_Lemma_3_5_01}
&\int_{{F^*}} {|{S_{L,\nu }}\left( f \right)(x){|^2}\varphi \left( {x,\lambda } \right)d\mu \left( x \right)} \nonumber\\&
\begin{array}{*{20}{c}}
{}&{}&{}
\end{array} \le {C_D}{\left( {1 + \nu } \right)^{{{\log }_2}{C_D}}}\int_{{F^*}} {\left( {\int_{{\Gamma _\nu }\left( x \right)} {|{t^2}L{e^{ - {t^2}L}}\left( f \right)(y){|^2}\frac{{d\mu \left( y \right)dt}}{{V\left( {y,t} \right)t}}} } \right)\varphi \left( {x,\lambda } \right)d\mu \left( x \right)} \nonumber\\&
\begin{array}{*{20}{c}}
{}&{}&{}
\end{array} = {C_{\nu ,D}}\int_{{\mathcal{R}_\nu }({F^*})} {|{t^2}L{e^{ - {t^2}L}}\left( f \right)(y){|^2}V{{\left( {y,t} \right)}^{ - 1}}\varphi \left( {B\left( {y,\nu t} \right) \cap {F^*},\lambda } \right)\frac{{d\mu \left( y \right)dt}}{t}}.
 \end{align}
 We then employ Lemma \ref{Lemma_Properties_of_Growth_Functions_03} (iv) to the set $E = B\left(y,t\right)$ and $B = B\left(y, \nu t\right)$, to get
}
{
 \setlength\abovedisplayskip{1ex}  
 \setlength\belowdisplayskip{1ex}
 \begin{equation}\label{Eq.Proof_Lemma_3_5_02}
   \varphi \left( {B\left( {y,\nu t} \right),\lambda } \right) \le C{\left( {2\nu } \right)^{{{\log }_2}{C_D}}}\varphi \left( {B\left( {y,t} \right),\lambda } \right).
 \end{equation}
 Applying Lemma \ref{Lemma_Properties_of_Growth_Functions_03} (iv) once again to $E = B\left( {y,t} \right) \cap F$ and $B = B\left(y, t\right)$, we get
}
{
 \setlength\abovedisplayskip{1ex}  
 \setlength\belowdisplayskip{1ex}
 \begin{equation}\label{Eq.Proof_Lemma_3_5_03}
   \varphi \left( {B\left( {y,t} \right),\lambda } \right) \le C{\left( {\frac{{V\left( {y,t} \right)}}{{\mu \left( {B\left( {y,t} \right) \cap F} \right)}}} \right)^p}\varphi \left( {B\left( {y,t} \right) \cap F,\lambda } \right).
 \end{equation}
 Therefore, from \eqref{Eq.Proof_Lemma_3_5_02} and \eqref{Eq.Proof_Lemma_3_5_03}, plus part (ii) of Lemma \ref{Lemma_3_4}, we have
}
$$
 \setlength\abovedisplayskip{1ex}  
 \setlength\belowdisplayskip{1ex}
 \varphi \left( {B\left( {y,\nu t} \right),\lambda } \right) \le C\varphi \left( {B\left( {y,t} \right) \cap F,\lambda } \right).
$$
From this estimate it follows that the last integral in \eqref{Eq.Proof_Lemma_3_5_01} is bounded by
$$
 \setlength\abovedisplayskip{1ex}  
 \setlength\belowdisplayskip{1ex}
 C\int_{{\mathcal{R}_\nu }({F^*})} {|{t^2}L{e^{ - {t^2}L}}\left( f \right)(y){|^2}V{{\left( {y,t} \right)}^{ - 1}}\varphi \left( {B\left( {y,t} \right) \cap F,\lambda } \right)\frac{{d\mu \left( y \right)dt}}{t}}.
$$
Finally, in the view of Lemma \ref{Lemma_3_4} (i), we observe that $\mathcal{R}_\nu \left(F^*\right) \subset \mathcal{R}\left(F\right)$, it follows immediately that the last integral above is bounded by
{
 \setlength\abovedisplayskip{1ex}  
 \setlength\belowdisplayskip{1ex}
\begin{align*}
&C\int_{\mathcal{R}(F)} {|{t^2}L{e^{ - {t^2}L}}\left( f \right)(y){|^2}V{{\left( {y,t} \right)}^{ - 1}}\varphi \left( {B\left( {y,t} \right) \cap F,\lambda } \right)\frac{{d\mu \left( y \right)dt}}{t}} \\&
\begin{array}{*{20}{c}}
{}&{}&{}
\end{array} = C\int_F {\left( {\int_{{\Gamma _\nu }\left( x \right)} {|{t^2}L{e^{ - {t^2}L}}\left( f \right)(y){|^2}\frac{{d\mu \left( y \right)dt}}{{V\left( {y,t} \right)t}}} } \right)\varphi \left( {x,\lambda } \right)d\mu \left( x \right)} \\&
\begin{array}{*{20}{c}}
{}&{}&{}
\end{array} \le C\int_F {|{S_L}\left( f \right)(x){|^2}\varphi \left( {x,\lambda } \right)d\mu \left( x \right)},
\end{align*}
where the last inequality follows from the fact that
}
$$
 \setlength\abovedisplayskip{1ex}  
 \setlength\belowdisplayskip{1ex}
V{\left( {y,t} \right)^{ - 1}} \le {C_D}{\left( {1 + {t^{ - 1}}d\left( {x,y} \right)} \right)^{{{\log }_2}{C_D}}}V{\left( {x,t} \right)^{ - 1}} < C_D^2V{\left( {x,t} \right)^{ - 1}}
$$
for $\left( {y,t} \right) \in {\Gamma _\nu }\left( x \right)$. This proves the lemma.

\end{proof}
\begin{lemma}{}\label{Lemma_3_6}
Assume that L satisfies \rm{(\textbf{H1})} and \rm{(\textbf{H2})}. \emph{Let $q\in\left(1, \infty\right)$, $\varphi$ be as in Definition \ref{Def.Growth_Fun} and $\varphi \in \mathbb{A}_q\left(X\right)$. Then for all $\nu \in \left(0, \infty\right)$ there exists a positive constant $C_\nu$ such that, for all measurable functions $f$, }
$$
 \setlength\abovedisplayskip{1ex}  
 \setlength\belowdisplayskip{1ex}
 \int_X {\varphi \left( {x,{S_{L,\nu }}\left( f \right)(x)} \right)d\mu \left( x \right)}  \le C_\nu \int_X {\varphi \left( {x,{S_L}\left( f \right)(x)} \right)d\mu \left( x \right)}.
$$
\end{lemma}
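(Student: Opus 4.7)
For $\nu \in (0, 1]$ the conclusion is immediate: $\Gamma_\nu(x) \subset \Gamma(x)$ and the normalizer $V(x,t)$ is common to $S_L$ and $S_{L,\nu}$, so $S_{L,\nu}(f) \le S_L(f)$ pointwise and monotonicity of $\varphi(x,\cdot)$ closes the case. Assume henceforth $\nu > 1$. I first replace $\varphi$ by the equivalent continuous, strictly increasing growth function $\tilde\varphi(x, t) := \int_0^t \varphi(x, s)\, s^{-1}\,ds$ furnished by \lemref{Lemma_Properties_of_Growth_Functions_01}, and by Fubini rewrite
$$
\int_X \tilde\varphi\bigl(x, S_{L,\nu}(f)(x)\bigr)\,d\mu(x) = \int_0^\infty \frac{1}{s}\,\varphi\bigl(\{x : S_{L,\nu}(f)(x) > s\},\, s\bigr)\,ds,
$$
so the task reduces to a distributional estimate on the level sets of $S_{L,\nu}(f)$.

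For fixed $s > 0$, set $O_s := \{x : S_L(f)(x) > s\}$, $F_s := O_s^\complement$, and let $O_s^*$ and $F_s^* := (O_s^*)^\complement$ be the sets produced by \lemref{Lemma_3_4} for the given $\nu > 1$. The obvious inclusion
$$
\{S_{L,\nu}(f) > s\} \subset O_s^* \cup \bigl(\{S_{L,\nu}(f) > s\} \cap F_s^*\bigr)
$$
splits the $\varphi$-measure of the level set into two pieces. Since $O_s^* = \{\mathcal{M}(\mathcal{X}_{O_s}) > (4\nu)^{-2\log_2 C_D}\}$, Chebyshev against the weight $\varphi(\cdot, s)$ combined with the weighted maximal inequality \lemref{Lemma_Properties_of_Growth_Functions_03}(iii) (applicable since $\varphi \in \mathbb{A}_q$) yields $\varphi(O_s^*, s) \le C_\nu\, \varphi(O_s, s)$. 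For the other piece, Chebyshev followed by \lemref{Lemma_3_5} gives
$$
\varphi\bigl(\{S_{L,\nu}(f) > s\} \cap F_s^*,\, s\bigr) \le \frac{C}{s^2} \int_{F_s} |S_L(f)(x)|^2 \varphi(x, s)\,d\mu(x).
$$

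Integrating both bounds against $s^{-1}\,ds$ and swapping integrals by Fubini, the first piece rearranges to $\int_X \tilde\varphi(x, S_L(f)(x))\,d\mu(x) \cong \int_X \varphi(x, S_L(f)(x))\,d\mu(x)$, while the second becomes
$$
\int_X |S_L(f)(x)|^2 \int_{S_L(f)(x)}^\infty \frac{\varphi(x, s)}{s^3}\,ds\,d\mu(x).
$$
Since $\varphi$ is of uniformly upper type $p_1 \le 1 < 2$, one has $\int_t^\infty s^{-3}\varphi(x, s)\,ds \le C\, t^{-2}\varphi(x, t)$, so this contribution is also controlled by $C \int_X \varphi(x, S_L(f)(x))\,d\mu(x)$, completing the estimate. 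The main technical care lies in applying the upper-type bound twice (once to convert the layer-cake integral back into $\int \varphi(x, S_L(f))\,d\mu$ and once to control the tail integral $\int_t^\infty s^{-3}\varphi(x,s)\,ds$); a minor nuisance is that at $x$ with $S_L(f)(x) = 0$ the tail integral diverges, but it is multiplied by the vanishing factor $|S_L(f)(x)|^2$, so these points contribute nothing.
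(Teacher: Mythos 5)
Your proof is correct and follows essentially the same route as the paper: the $\nu\le1$ triviality, the good/bad set split via Lemma~\ref{Lemma_3_4}, the maximal-function bound from Lemma~\ref{Lemma_Properties_of_Growth_Functions_03}(iii), the square-function comparison from Lemma~\ref{Lemma_3_5}, and finally a Fubini plus uniformly-upper-type-one argument to close. The only cosmetic difference is that you Fubini in $(x,s)$ and bound the resulting inner tail integral $\int_{S_L(f)(x)}^\infty s^{-3}\varphi(x,s)\,ds$ directly, whereas the paper first layer-cakes $\int_{F_\lambda}|S_L(f)|^2\varphi(\cdot,\lambda)\,d\mu$ and swaps the $(t,\lambda)$ integrals afterward; these are equivalent presentations of the same computation.
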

\begin{proof}{}
  If $\nu \in \left(0, 1\right]$, the conclusion is trivial. We further suppose that $\nu \in \left(1, \infty\right)$. For all $\lambda \in \left(0, \infty\right)$, let
  $$
  \setlength\abovedisplayskip{1ex}  
  \setlength\belowdisplayskip{1ex}
  {O_\lambda }: = \left\{ {x \in X;{S_L}\left( f \right)(x) > \lambda } \right\}
  $$
  and
  $$
  \setlength\abovedisplayskip{1ex}  
  \setlength\belowdisplayskip{1ex}
  O_\lambda ^*: = \left\{ {x \in X;\mathcal{M}\left( {{\mathcal{X}_{{O_\lambda }}}} \right)(x) > {{\left( {4\nu } \right)}^{ - {{\log }_2}{C_D}}}} \right\}.
  $$
  where $\mathcal{M}$ is the Hardy-Littlewood maximal function. Since $\varphi \in \mathbb{A}_q\left(X\right)$, it follows from Lemma \ref{Lemma_Properties_of_Growth_Functions_03} (iii),
  {
  \setlength\abovedisplayskip{1ex}  
  \setlength\belowdisplayskip{1ex}
  \begin{align}\label{Eq.Proof_Lemma_3_6_01}
\varphi \left( {O_\lambda ^*,\lambda } \right) &= \varphi \left( {\left\{ {x \in X;\mathcal{M}\left( {{\mathcal{X}_{{O_\lambda }}}} \right)(x) > {{\left( {4\nu } \right)}^{ - {{\log }_2}{C_D}}}} \right\},\lambda } \right)\nonumber\\
 &\le \int_X {{{\left( {4\nu } \right)}^{q{{\log }_2}{C_D}}}{{\left( {\mathcal{M}\left( {{\mathcal{X}_{{O_\lambda }}}} \right)(x)} \right)}^q}\varphi \left( {x,\lambda } \right)d\mu \left( x \right)} \nonumber\\
 &\le C\varphi \left( {{O_\lambda },\lambda } \right).
  \end{align}
  Let $F_\lambda:=O_\lambda^\complement$, $F_\lambda^*:=(O_\lambda^*)^\complement$ and apply Lemma \ref{Lemma_3_5} to get
  }
  {
  \setlength\abovedisplayskip{1ex}  
  \setlength\belowdisplayskip{1ex}
  \begin{equation}\label{Eq.Proof_Lemma_3_6_02}
    \int_{{F_\lambda^*}} {|{S_{L,\nu }}\left( f \right)(x){|^2}\varphi \left( {x,\lambda } \right)d\mu \left( x \right)}  \le C\int_{F_\lambda} {|{S_L}\left( f \right)(x){|^2}\varphi \left( {x,\lambda } \right)d\mu \left( x \right)}.
  \end{equation}
  Thus, from \eqref{Eq.Proof_Lemma_3_6_01} and \eqref{Eq.Proof_Lemma_3_6_02}, it follows that
  }
  {
  \setlength\abovedisplayskip{1ex}  
  \setlength\belowdisplayskip{1ex}
  \begin{align*}
&\varphi \left( {\left\{ {x \in X;{S_{L,\nu }}\left( f \right)(x) > \lambda } \right\},\lambda } \right)\\&
\begin{array}{*{20}{c}}
{}&{}&{}&{}
\end{array} \le \varphi \left( {O_\lambda ^*,\lambda } \right) + \varphi \left( {\left\{ {x \in F_\lambda ^*;{S_{L,\nu }}\left( f \right)(x) > \lambda } \right\},\lambda } \right)\\&
\begin{array}{*{20}{c}}
{}&{}&{}&{}
\end{array} \le C\varphi \left( {{O_\lambda },\lambda } \right) + {\lambda ^{ - 2}}\int_{F_\lambda ^*} {|{S_{L,\nu }}\left( f \right)(x){|^2}\varphi \left( {x,\lambda } \right)d\mu \left( x \right)} \\&
\begin{array}{*{20}{c}}
{}&{}&{}&{}
\end{array} \le C\left[ {\varphi \left( {{O_\lambda },\lambda } \right) + {\lambda ^{ - 2}}\int_{{F_\lambda }} {|{S_L}\left( f \right)(x){|^2}\varphi \left( {x,\lambda } \right)d\mu \left( x \right)} } \right]\\&
\begin{array}{*{20}{c}}
{}&{}&{}&{}
\end{array} \le C\left[ {\varphi \left( {{O_\lambda },\lambda } \right) + {\lambda ^{ - 2}}\int_0^\lambda  {t\varphi \left( {\left\{ {x \in X;{S_L}\left( f \right)(x) > t} \right\},\lambda } \right)dt} } \right],
  \end{align*}
  which, together with the assumption $\nu \in \left(1, \infty\right)$, Lemma \ref{Lemma_Properties_of_Growth_Functions_01} and the uniformly upper type $1$ of $\varphi$, we further get that
  }
  {
  \setlength\abovedisplayskip{1ex}  
  \setlength\belowdisplayskip{1ex}
  \begin{align*}
&\int_X {\varphi \left( {x,{S_{L,\nu }}\left( f \right)(x)} \right)d\mu \left( x \right)} \\&
\begin{array}{*{20}{c}}
{}&{}&{}&{}
\end{array} \le C \int_0^\infty  {{\lambda ^{ - 1}}\varphi \left( {\left\{ {x \in X;{S_{L,\nu }}\left( f \right)(x) > \lambda } \right\},\lambda } \right)d\lambda } \\&
\begin{array}{*{20}{c}}
{}&{}&{}&{}
\end{array} \le C\left[ {\int_0^\infty  {{\lambda ^{ - 1}}\varphi \left( {{O_\lambda },\lambda } \right)d\lambda } } \right.\\&
\begin{array}{*{20}{c}}
{}&{}&{}&{}&{}&{}&{}&{}
\end{array}\left. { + \int_0^\infty  {{\lambda ^{ - 3}}\int_0^\lambda  {t\varphi \left( {\left\{ {x \in X;{S_L}\left( f \right)(x) > t} \right\},\lambda } \right)dt} d\lambda } } \right]\\&
\begin{array}{*{20}{c}}
{}&{}&{}&{}
\end{array} \le C\left[ {\int_0^\infty  {{\lambda ^{ - 1}}\varphi \left( {\left\{ {x \in X;{S_L}\left( f \right)(x) > \lambda } \right\},\lambda } \right)d\lambda } } \right.\\&
\left. {\begin{array}{*{20}{c}}
{}&{}&{}&{}&{}&{}&{}&{}
\end{array} + \int_0^\infty  {{\lambda ^{ - 3}}\int_0^\lambda  {t\varphi \left( {\left\{ {x \in X;{S_L}\left( f \right)(x) > t} \right\},\lambda } \right)dt} d\lambda } } \right]\\&
\begin{array}{*{20}{c}}
{}&{}&{}&{}
\end{array} \le C\left[ {\int_X {\varphi \left( {x,{S_L}\left( f \right)(x)} \right)d\mu \left( x \right)} } \right.\\&
\begin{array}{*{20}{c}}
{}&{}&{}&{}&{}&{}&{}&{}
\end{array}\left. { + \int_0^\infty  {\varphi \left( {\left\{ {x \in X;{S_L}\left( f \right)(x) > t} \right\},t} \right)\int_t^\infty  {{\lambda ^{ - 2}}d\lambda } dt} } \right]\\&
\begin{array}{*{20}{c}}
{}&{}&{}&{}
\end{array} \le C\int_X {\varphi \left( {x,{S_L}\left( f \right)(x)} \right)d\mu \left( x \right)}.
  \end{align*}
  This finishes the proof of Lemma \ref{Lemma_3_6}.
  }

\end{proof}

Moreover, we also need the following Lemma, whose standard proof can be found in \citep{Frazier:1990}, we omit the details. And in what follows, we recall that the Hardy-Littlewood maximal operator $\mathcal{M}$ on $\left( X, \mu, d\right)$ is defined by
$$
\setlength\abovedisplayskip{1ex}
\setlength\belowdisplayskip{1ex}
\mathcal{M}\left( f \right)\left( x\right): = \mathop {\sup }\limits_{B \ni x} \frac{1}{{\mu\left( B \right)}}\int_B {\left| {f\left( y \right)} \right|d\mu \left( y \right)}.
$$
where the supremum is taken over all balls $B \ni x$.
\begin{lemma}{}\label{Lemma_3_2}
Suppose $0 < q \le 1$ and $N > {n \mathord{\left/
 {\vphantom {n q}} \right.
 \kern-\nulldelimiterspace} q}$, where $n$ is the doubling dimension of the space in \eqref{Eq.Doubling_dim_of_X_01}. Fix an integer $k$, and let ${\left\{ {{s_{Q_\alpha ^k}}} \right\}_{\alpha  \in {I_k}}}$ be as in Proposition \ref{Pro.ATLM_Family}, then for any subsequence ${I_k}^\prime  \subset {I_k}$ and each $x \in X$,
 $$
 \setlength\abovedisplayskip{1ex}
 \setlength\belowdisplayskip{1ex}
 \sum\limits_{\alpha  \in {I_k}^\prime } {\frac{{\left| {{s_{Q_\alpha ^k}}} \right|}}{{{{\left[ {1 + \ell{{( {Q_\alpha ^k})}^{ - 1}}d\left( {x, y_\alpha ^k} \right)} \right]}^N}}}}  \le C{\left[ {\mathcal{M}\left( {\sum\limits_{\alpha  \in {I_k}^\prime } {{{\left| {{s_{Q_\alpha ^k}}} \right|}^q}\mathcal{X}\left(  \cdot  \right)} } \right)} \right]^{{1 \mathord{\left/
 {\vphantom {1 q}} \right.
 \kern-\nulldelimiterspace} q}}}\left( x \right).
 $$
 where $y_{\alpha}^k$ denotes the center of $Q_{\alpha}^k$.
\end{lemma}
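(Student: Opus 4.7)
The plan is to reduce the claim to a Hardy--Littlewood maximal estimate via $q$-subadditivity and then exploit a dyadic annular decomposition around $x$, the key input being a uniform lower bound on $\mu(Q_\alpha^k)$ supplied by the inscribed-ball property of the dyadic cubes together with the doubling inequality \eqref{Eq.Doubling_dim_of_X_02}.

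First, since $0<q\le 1$, the subadditivity $\bigl(\sum_\alpha a_\alpha\bigr)^q \le \sum_\alpha a_\alpha^q$ for non-negative $a_\alpha$ reduces the assertion (after raising to the $q$-th power) to proving
\begin{equation*}
\sum_{\alpha\in I_k'}\frac{|s_{Q_\alpha^k}|^q}{\bigl[1+\ell(Q_\alpha^k)^{-1}d(x,y_\alpha^k)\bigr]^{Nq}}\ \le\ C\,\mathcal{M}(F^q)(x),
\end{equation*}
where $F^q:=\sum_{\alpha\in I_k'}|s_{Q_\alpha^k}|^q\mathcal{X}_{Q_\alpha^k}$. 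Because $k$ is fixed, $\ell(Q_\alpha^k)=\delta^{k-1}$ does not depend on $\alpha$, so I split $I_k'$ into dyadic annuli $A_0:=\{\alpha:d(x,y_\alpha^k)\le \delta^{k-1}\}$ and $A_j:=\{\alpha:2^{j-1}\delta^{k-1}<d(x,y_\alpha^k)\le 2^j\delta^{k-1}\}$ for $j\ge 1$. On $A_j$ the denominator is at least $2^{(j-1)Nq}$ (and at least $1$ for $j=0$), and since $\mathrm{diam}(Q_\alpha^k)\le \delta^{k-1}$, every such cube lies in $B_j:=B(x,2^{j+1}\delta^{k-1})$.

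The heart of the proof is to control $\sum_{\alpha\in A_j}|s_{Q_\alpha^k}|^q$ by $\mathcal{M}(F^q)(x)$ with an acceptable power of $2^j$. Using $Q_\alpha^k\supset B(y_\alpha^k,C_2\delta^k)$ from Lemma \ref{Lemma_Decompositon_of_Space}(v) and applying \eqref{Eq.Doubling_dim_of_X_02} with $R=2^{j+1}\delta^{k-1}$, $r=C_2\delta^k$ and $y=y_\alpha^k$, together with the annular estimate $d(x,y_\alpha^k)\le 2^j\delta^{k-1}$, yields the uniform lower bound $\mu(Q_\alpha^k)\ge c\,2^{-jm}V(x,2^{j+1}\delta^{k-1})$ with $m=\log_2 C_D=n$. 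Since the cubes $\{Q_\alpha^k\}_{\alpha\in A_j}$ are pairwise disjoint and contained in $B_j$,
\begin{equation*}
\sum_{\alpha\in A_j}|s_{Q_\alpha^k}|^q\ \le\ \frac{C\,2^{jm}}{V(x,2^{j+1}\delta^{k-1})}\int_{B_j}F^q\,d\mu\ \le\ C\,2^{jm}\mathcal{M}(F^q)(x).
\end{equation*}

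Summing over $j\ge 0$ produces a geometric series $\sum_{j\ge 0}2^{-j(Nq-m)}\mathcal{M}(F^q)(x)$, which converges precisely when $Nq>m=n$; this is exactly the hypothesis $N>n/q$. The step I expect to be the main obstacle is the third one: cubes at a single level $k$ need not have comparable measures in a general space of homogeneous type, so extracting the bound $\mu(Q_\alpha^k)\ge c\,2^{-jm}V(x,2^{j+1}\delta^{k-1})$ uniformly in $\alpha\in A_j$ requires carefully pairing Lemma \ref{Lemma_Decompositon_of_Space}(v) with the sharp doubling inequality \eqref{Eq.Doubling_dim_of_X_02}; once this bound is secured, the summation over annuli is routine.
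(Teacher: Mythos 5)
Your proof is correct and is precisely the standard annular-decomposition argument from Frazier--Jawerth that the paper cites as the omitted proof: $q$-subadditivity, a dyadic-annulus splitting around $x$, the inscribed-ball property plus \eqref{Eq.Doubling_dim_of_X_02} to get the uniform lower bound $\mu(Q_\alpha^k)\gtrsim 2^{-jm}V(x,2^{j+1}\delta^{k-1})$, disjointness of same-level cubes, and a geometric series converging when $Nq>m$. One small notational remark: Lemma~\ref{Lemma_Decompositon_of_Space}(v) gives $Q_\alpha^k\supset B(z_\alpha^k,C_2\delta^k)$ for some $z_\alpha^k\in Q_\alpha^k$ which is not a priori the center $y_\alpha^k$; this is harmless because $d(y_\alpha^k,z_\alpha^k)\le\operatorname{diam}(Q_\alpha^k)\le\delta^{k-1}$, so the same bound on $d(x,z_\alpha^k)$ over the annulus $A_j$ holds up to a fixed constant, but you should apply \eqref{Eq.Doubling_dim_of_X_02} with $y=z_\alpha^k$ rather than $y=y_\alpha^k$ to keep the argument airtight.
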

\begin{proofoftheorem}{\ref{Theorem_3_2}}
  For any fixed $f \in {H_{\varphi, L}}\left( X \right) \cap {L^2}\left( X \right)$, we let ${\lambda _0} = {\left\| f \right\|_{{H_{\varphi, L}}\left(X\right)}}$ and ${\lambda _1} = {\left\| {{W_f}} \right\|_{{L^\varphi }\left(X\right)}}$. It suffices to show that for all $\lambda \in \left(0, \infty \right)$,
  {
  \setlength\abovedisplayskip{1ex}  
  \setlength\belowdisplayskip{1ex}
  \begin{equation}\label{Proof_of_Theorem_3_2_01}
    \int_X {\varphi \left( {x,\frac{{{S_L}\left( f \right)\left( x \right)}}{\lambda }} \right)d\mu \left( x \right)}  \cong \int_X {\varphi \left( {x,\frac{{\left| {{W_f}\left( x \right)} \right|}}{\lambda }} \right)d\mu \left( x \right)}.
  \end{equation}
  In fact, if \eqref{Proof_of_Theorem_3_2_01} holds for all $\lambda \in \left(0, \infty \right)$, then there exists a constant $C_0$ such that
  }
  $$
  \setlength\abovedisplayskip{1ex}  
  \setlength\belowdisplayskip{1ex}
  \int_X {\varphi \left( {x,\frac{{{S_L}\left( f \right)\left( x \right)}}{\lambda _1 }} \right)d\mu \left( x \right)}  \le {C_0}\int_X {\varphi \left( {x,\frac{{\left| {{W_f}\left( x \right)} \right|}}{{{\lambda _1}}}} \right)d\mu \left( x \right)}  \le {C_0} ,
  $$
  which, together with \eqref{Eq.Property_of_Growth_Function}, implies that
  $$
  \setlength\abovedisplayskip{1ex}  
  \setlength\belowdisplayskip{1ex}
  \int_X {\varphi \left( {x,\frac{{{G_L}\left( f \right)\left( x \right)}}{{{C_1}{\lambda _1}}}} \right)d\mu \left( x \right)}  \le 1
  $$
  for some constants $C_1$, and hence we have $\lambda _0 \le C_1 \lambda _1$. In a similar fashion, one can prove $\lambda _1 \le C_2 \lambda _0$ for some constants $C_2$, and get the desired property.

  Let $f$ be a function in $ {H_{\varphi, L }}\left( X \right) \cap {L^2}\left( X \right)$. In the view of Lemma \ref{Lemma_Decompositon_of_Space}, for any fixed $\left( x, k\right) \in X \times \mathbb{Z}$ there exists a unique $\alpha \in I_{k}$, such that $x \in Q_\alpha ^k$. Let $Q_x ^k$ denote the such $Q_\alpha ^k$ and we write
  {
  \setlength\abovedisplayskip{1ex}  
  \setlength\belowdisplayskip{1ex}
\begin{align}\label{Proof_of_Theorem_3_2_02}
{W_f}\left( x \right)
 &= {\left\{ {\sum\limits_{k \in \mathbb{Z}} {\sum\limits_{\alpha  \in {I_k}} {{{\left[ {\mu{{\left( {Q_\alpha ^k} \right)}^{ - {1 \mathord{\left/
 {\vphantom {1 2}} \right.
 \kern-\nulldelimiterspace} 2}}}\left| {{s_{Q_\alpha ^k}}} \right|{\mathcal{X} _{Q_\alpha ^k}}\left( x \right)} \right]}^2}} } } \right\}^{{1 \mathord{\left/
 {\vphantom {1 2}} \right.
 \kern-\nulldelimiterspace} 2}}} \nonumber\\
 &= {\left\{ {\sum\limits_{k \in \mathbb{Z}} {\mu{{\left( {Q_x^k} \right)}^{ - 1}}{{\left| {{s_{Q_x^k}}} \right|}^2}} } \right\}^{{1 \mathord{\left/
 {\vphantom {1 2}} \right.
 \kern-\nulldelimiterspace} 2}}}\nonumber\\
 &= {\left\{ {\sum\limits_{k \in \mathbb{Z}} {\int_{{\delta ^{k + 1}}}^{{\delta ^k}} {\mu{{\left( {Q_x^k} \right)}^{ - 1}}\int_{Q_x^k} {{{| {{t^2}L{e^{ - {t^2}L}}f\left( y \right)} |}^2}d\mu \left( y \right)\frac{{dt}}{t}} } } } \right\}^{{1 \mathord{\left/
 {\vphantom {1 2}} \right.
 \kern-\nulldelimiterspace} 2}}},
\end{align}
where $\delta \in \left( 0, 1\right)$ is a constant as in Lemma \ref{Lemma_Decompositon_of_Space} and the last quantity follows from Proposition \ref{Pro.ATLM_Family}. Moreover, by $\left( {iv} \right)$ and $\left( {v} \right)$ of Lemma \ref{Lemma_Decompositon_of_Space}, we know that for any fixed $\left( x, k\right) \in X \times \mathbb{Z}$ there exists $z_x^k \in Q_x^k$ and constants $C_1 \in \left( 0, 1\right)$, $C_2 = \delta ^{-1}$ such that ${\rm{diam}}\left( {Q_\alpha ^k} \right) \le {C_2}{\delta ^k}:= \ell\left(Q_\alpha^k\right)$ and
}
$$
\setlength\abovedisplayskip{1ex}  
\setlength\belowdisplayskip{1ex}
B\left( {z_x^k,{C_1}{\delta ^k}} \right) \subset Q_x^k \subset B\left( {x,{C_2}{\delta ^k}} \right) \subset B\left( {x,{C_2}{\delta ^{ - 1}}t} \right)
$$
for all $t \in \left( {{\delta ^{k + 1}},{\delta ^k}} \right)$. It follows immediately from \eqref{Eq.Doubling_dim_of_X_02} that
{
\setlength\abovedisplayskip{1ex}  
\setlength\belowdisplayskip{1ex}
\begin{align*}
\mu\left(Q_\alpha^k\right)^{-1} &\le V{\left( {z_x^k,{C_1}{\delta ^k}} \right)^{ - 1}} \\
 &\le C{\left( {1 + \frac{{d\left( {x,z_x^k} \right)}}{{{C_1}{\delta ^k}}}} \right)^m}V{\left( {x,{C_1}{\delta ^k}} \right)^{ - 1}}\\
 &\le CV{\left( {x,{C_1}{\delta ^k}} \right)^{ - 1}}\\
 &\le CV{\left( {x,{\delta ^k}} \right)^{ - 1}}\\
 &\le CV{\left( {x,t} \right)^{ - 1}},
\end{align*}
where the last but one inequality follows from the fact that $V\left(x, \delta^k\right)\le C_D C_1^{-m}V\left(x, C_1\delta^k\right)$ with $C_1 \in \left(0, 1\right)$. Hence, by this estimate and \eqref{Proof_of_Theorem_3_2_02}, we have
}
{
\setlength\abovedisplayskip{1ex}  
\setlength\belowdisplayskip{1ex}
\begin{align*}
{W_f}\left( x \right) &\le C{\left\{ {\sum\limits_{k \in \mathbb{Z}} {\int_{{\delta ^{k + 1}}}^{{\delta ^k}} {V{{\left( {x,t} \right)}^{ - 1}}\int_{B\left( {x,{C_2}{\delta ^{ - 1}}t} \right)} {|{t^2}L{e^{ - {t^2}L}}f\left( y \right){|^2}d\mu \left( y \right)\frac{{dt}}{t}} } } } \right\}^{{1 \mathord{\left/
 {\vphantom {1 2}} \right.
 \kern-\nulldelimiterspace} 2}}}\\
& = C{\left\{ {\int_0^\infty  {\int_{d\left( {x,y} \right) < {\delta ^{ - 2}}t} {|{t^2}L{e^{ - {t^2}L}}f\left( y \right){|^2}\frac{{d\mu \left( y \right)}}{{V\left( {x,t} \right)}}\frac{{dt}}{t}} } } \right\}^{{1 \mathord{\left/
 {\vphantom {1 2}} \right.
 \kern-\nulldelimiterspace} 2}}}\\
 &= C{S_{L,{\delta ^{ - 2}}}}\left( f \right)\left( x \right),
\end{align*}
which, together with Lemma \ref{Lemma_3_6}, we deduce the $\ge$ inequality of \eqref{Proof_of_Theorem_3_2_01}.

It remains to establish the reverse inequality. In the view of Proposition \ref{Pro.ATLM_Family}, we write
 $f = \sum\limits_{k \in \mathbb{Z}} {\sum\limits_{\alpha  \in {I_k}} {{s_{Q_\alpha ^k}}{a_{Q_\alpha ^k}}} }$. Let $\delta$ be as in Lemma \ref{Lemma_Decompositon_of_Space} we get
}
{
 \setlength\abovedisplayskip{1ex}  
 \setlength\belowdisplayskip{1ex}
 \begin{align}\label{Proof_of_Theorem_3_2_03}
&{S_L}\left( f \right)\left( x \right)\nonumber\\&
\begin{array}{*{20}{c}}
{}&{}
\end{array} = {\left( {\int_0^\infty  {\int_{d\left( {x,y} \right) < t} {|{t^2}L{e^{ - {t^2}L}}(f)\left( y \right){|^2}\frac{{d\mu \left( y \right)}}{{V\left( {x,t} \right)}}} \frac{{dt}}{t}} } \right)^{{1 \mathord{\left/
 {\vphantom {1 2}} \right.
 \kern-\nulldelimiterspace} 2}}}\nonumber\\&
\begin{array}{*{20}{c}}
{}&{}
\end{array} = {\left( {\int_0^\infty  {\int_{d\left( {x,y} \right) < t} {|{t^2}L{e^{ - {t^2}L}}(\sum\limits_{k \in \mathbb{Z}} {\sum\limits_{\alpha  \in {I_k}} {{s_{Q_\alpha ^k}}{a_{Q_\alpha ^k}}} } )\left( y \right){|^2}\frac{{d\mu \left( y \right)}}{{V\left( {x,t} \right)}}} \frac{{dt}}{t}} } \right)^{{1 \mathord{\left/
 {\vphantom {1 2}} \right.
 \kern-\nulldelimiterspace} 2}}}\nonumber\\&
\begin{array}{*{20}{c}}
{}&{}
\end{array} = {\left( {\sum\limits_{j \in \mathbb{Z}} {\int_{{\delta ^j}}^{{\delta ^{j - 1}}} {\int_{d\left( {x,y} \right) < t} {|{t^2}L{e^{ - {t^2}L}}(\sum\limits_{k \in \mathbb{Z}} {\sum\limits_{\alpha  \in {I_k}} {{s_{Q_\alpha ^k}}{a_{Q_\alpha ^k}}} } )\left( y \right){|^2}\frac{{d\mu \left( y \right)}}{{V\left( {x,t} \right)}}} \frac{{dt}}{t}} } } \right)^{{1 \mathord{\left/
 {\vphantom {1 2}} \right.
 \kern-\nulldelimiterspace} 2}}}\nonumber\\&
\begin{array}{*{20}{c}}
{}&{}
\end{array} \le {\left( {\sum\limits_{j \in \mathbb{Z}} {\int_{{\delta ^j}}^{{\delta ^{j - 1}}} {\int_{d\left( {x,y} \right) < t} {|{t^2}L{e^{ - {t^2}L}}(\sum\limits_{k > j} {\sum\limits_{\alpha  \in {I_k}} {{s_{Q_\alpha ^k}}{a_{Q_\alpha ^k}}} } )\left( y \right){|^2}\frac{{d\mu \left( y \right)}}{{V\left( {x,t} \right)}}} \frac{{dt}}{t}} } } \right)^{{1 \mathord{\left/
 {\vphantom {1 2}} \right.
 \kern-\nulldelimiterspace} 2}}}\nonumber\\&
\begin{array}{*{20}{c}}
{}&{}&{}
\end{array} + {\left( {\sum\limits_{j \in \mathbb{Z}} {\int_{{\delta ^j}}^{{\delta ^{j - 1}}} {\int_{d\left( {x,y} \right) < t} {|{t^2}L{e^{ - {t^2}L}}(\sum\limits_{k \le j} {\sum\limits_{\alpha  \in {I_k}} {{s_{Q_\alpha ^k}}{a_{Q_\alpha ^k}}} } )\left( y \right){|^2}\frac{{d\mu \left( y \right)}}{{V\left( {x,t} \right)}}} \frac{{dt}}{t}} } } \right)^{{1 \mathord{\left/
 {\vphantom {1 2}} \right.
 \kern-\nulldelimiterspace} 2}}}.
 \end{align}

 We now estimate the first part of \eqref{Proof_of_Theorem_3_2_03}. For any $k > j$ and $\alpha \in I_k$, noting that $a_{Q_{\alpha}^k} = L^Mb_{Q_{\alpha}^k}$, we write
 }
 $$
 \setlength\abovedisplayskip{1ex}  
 \setlength\belowdisplayskip{1ex}
 \left| {{t^2}L{e^{ - {t^2}L}}\left( {{a_{Q_\alpha ^k}}} \right)(y)} \right| = \left| {{t^2}{L^{M + 1}}{e^{ - {t^2}L}}\left( {{b_{Q_\alpha ^k}}} \right)(y)} \right| = {t^{ - 2M}}\left| {{{\left( {{t^2}L} \right)}^{M + 1}}{e^{ - {t^2}L}}\left( {{b_{Q_\alpha ^k}}} \right)(y)} \right|.
 $$
  Let $n$ be as in \eqref{Eq.Doubling_dim_of_X_01}, since $M > {{nq\left( \varphi  \right)} \mathord{\left/
 {\vphantom {{nq\left( \varphi  \right)} {\left( {2{p_1}} \right)}}} \right.
 \kern-\nulldelimiterspace} {\left( {2{p_1}} \right)}}$, we can choose some $q = r$ with $r$ be as in Corollary \ref{Corollary_Property_of_Growth_Functions_04} such that $2M > {n \mathord{\left/
 {\vphantom {n q}} \right.
 \kern-\nulldelimiterspace} q}$. We then let $N$ be some positive number such that $2M > N > {n \mathord{\left/
 {\vphantom {n q}} \right.
 \kern-\nulldelimiterspace} q}$. Then by Definition \ref{Def.AT_LM_Family}, the upper bound of the kernel of $\left(t^2L\right)^{M+1}e^{-t^2L}$ and \eqref{Eq.Doubling_Int_Properties}, we get
 {
 \setlength\abovedisplayskip{1ex}  
 \setlength\belowdisplayskip{1ex}
 \begin{align}
&\left| {{t^2}L{e^{ - {t^2}L}}\left( {{a_{Q_\alpha ^k}}} \right)(y)} \right|\nonumber\\&
\begin{array}{*{20}{c}}
{}&{}&{}
\end{array} \le \frac{{{C_5}}}{{V\left( {y,t} \right)}}{t^{ - 2M}}\ell {\left( {Q_\alpha ^k} \right)^{2M}}\mu {\left( {Q_\alpha ^k} \right)^{{{ - 1} \mathord{\left/
 {\vphantom {{ - 1} 2}} \right.
 \kern-\nulldelimiterspace} 2}}}\int_{3Q_\alpha ^k} {\exp \left( { - \frac{{d{{\left( {y,z} \right)}^2}}}{{{C_6}{t^2}}}} \right)} d\mu \left( z \right)\nonumber\\&
\begin{array}{*{20}{c}}
{}&{}&{}
\end{array} \le C{t^{ - 2M}}\ell {\left( {Q_\alpha ^k} \right)^{2M}}\mu {\left( {Q_\alpha ^k} \right)^{{{ - 1} \mathord{\left/
 {\vphantom {{ - 1} 2}} \right.
 \kern-\nulldelimiterspace} 2}}}{\left( {\frac{t}{{t + d(y,z_\alpha ^k)}}} \right)^N},\nonumber
 \end{align}
 where we denote by $z_{\alpha}^{k}$ the center of $Q_{\alpha}^k$. By the fact that $d(x, y) < t$, we further obtain
 }
 {
 \setlength\abovedisplayskip{1ex}  
 \setlength\belowdisplayskip{1ex}
 \begin{align*}
&{\left( {\int_{d\left( {x,y} \right) < t} {|{t^2}L{e^{ - {t^2}L}}({a_{Q_\alpha ^k}})\left( y \right){|^2}\frac{{d\mu \left( y \right)}}{{V\left( {x,t} \right)}}} } \right)^{{1 \mathord{\left/
 {\vphantom {1 2}} \right.
 \kern-\nulldelimiterspace} 2}}}\\&
\begin{array}{*{20}{c}}
{}&{}&{}
\end{array} \le C{t^{ - 2M}}\ell {\left( {Q_\alpha ^k} \right)^{2M}}\mu {\left( {Q_\alpha ^k} \right)^{{{ - 1} \mathord{\left/
 {\vphantom {{ - 1} 2}} \right.
 \kern-\nulldelimiterspace} 2}}}{\left( {\int_{d\left( {x,y} \right) < t} {{{\left( {\frac{{t + d\left( {x,y} \right)}}{{t + d\left( {x,z_\alpha ^k} \right)}}} \right)}^{2N}}\frac{{d\mu \left( y \right)}}{{V\left( {x,t} \right)}}} } \right)^{{1 \mathord{\left/
 {\vphantom {1 2}} \right.
 \kern-\nulldelimiterspace} 2}}}\\&
\begin{array}{*{20}{c}}
{}&{}&{}
\end{array} \le C{t^{ - 2M}}\ell {\left( {Q_\alpha ^k} \right)^{2M}}\mu {\left( {Q_\alpha ^k} \right)^{{{ - 1} \mathord{\left/
 {\vphantom {{ - 1} 2}} \right.
 \kern-\nulldelimiterspace} 2}}}{\left( {1 + {t^{ - 1}}d(x,z_\alpha ^k)} \right)^{ - N}}.
 \end{align*}
 Hence, we have
 }
 {
 \setlength\abovedisplayskip{1ex}  
 \setlength\belowdisplayskip{1ex}
 \begin{align}\label{Proof_of_Theorem_3_2_04}
&{\left( {\int_{d\left( {x,y} \right) < t} {|{t^2}L{e^{ - {t^2}L}}(\sum\limits_{k > j} {\sum\limits_{\alpha  \in {I_k}} {{s_{Q_\alpha ^k}}{a_{Q_\alpha ^k}}} } )\left( y \right){|^2}\frac{{d\mu \left( y \right)}}{{V\left( {x,t} \right)}}} } \right)^{{1 \mathord{\left/
 {\vphantom {1 2}} \right.
 \kern-\nulldelimiterspace} 2}}} \nonumber\\&
\begin{array}{*{20}{c}}
{}&{}&{}
\end{array} \le C\sum\limits_{k > j} {\sum\limits_{\alpha  \in {I_k}} {{t^{ - 2M}}\ell {{\left( {Q_\alpha ^k} \right)}^{2M}}\mu {{\left( {Q_\alpha ^k} \right)}^{{{ - 1} \mathord{\left/
 {\vphantom {{ - 1} 2}} \right.
 \kern-\nulldelimiterspace} 2}}}|{s_{Q_\alpha ^k}}|{{\left( {1 + {t^{ - 1}}d(x,z_\alpha ^k)} \right)}^{ - N}}} }  \nonumber\\&
\begin{array}{*{20}{c}}
{}&{}&{}
\end{array} \le C\sum\limits_{k > j} {{\delta ^{(2M - N)(k - j)}}\sum\limits_{\alpha  \in {I_k}} {\mu {{\left( {Q_\alpha ^k} \right)}^{{{ - 1} \mathord{\left/
 {\vphantom {{ - 1} 2}} \right.
 \kern-\nulldelimiterspace} 2}}}\frac{{|{s_{Q_\alpha ^k}}|}}{{{{\left[ {1 + \ell {{( {Q_\alpha ^k} )}^{ - 1}}d(x,z_\alpha ^k)} \right]}^N}}}} } \nonumber\\&
\begin{array}{*{20}{c}}
{}&{}&{}
\end{array} \le C\sum\limits_{k > j} {{\delta ^{\left( {2M - N} \right)\left( {k - j} \right)}}{{\left[ {{\cal M}\left( {\sum\limits_{\alpha  \in {I_k}} {|{s_{Q_\alpha ^k}}{|^q}\mu {{\left( {Q_\alpha ^k} \right)}^{{{ - q} \mathord{\left/
 {\vphantom {{ - q} 2}} \right.
 \kern-\nulldelimiterspace} 2}}}{\mathcal{X}_{Q_\alpha ^k}}\left(  \cdot  \right)} } \right)\left( x \right)} \right]}^{{1 \mathord{\left/
 {\vphantom {1 q}} \right.
 \kern-\nulldelimiterspace} q}}}} ,
 \end{align}
 where the last inequality follows from Lemma \ref{Lemma_3_2}.

 Estimate of the second part of \eqref{Proof_of_Theorem_3_2_03}. For any $k \le j$ and $\alpha \in I_k$, we write
 }
 $$
 \setlength\abovedisplayskip{1ex}  
 \setlength\belowdisplayskip{1ex}
 \left| {{t^2}L{e^{ - {t^2}L}}\left( {{a_{Q_\alpha ^k}}} \right)(y)} \right| = {t^2}\left| {{e^{ - {t^2}L}}\left( {L\left( {{a_{Q_\alpha ^k}}} \right)} \right)(y)} \right|.
 $$
 Then by Definition \ref{Def.AT_LM_Family}, the Gaussian estimate \eqref{Eq.GE} and inequality \eqref{Eq.Doubling_Int_Properties}, we get
 {
 \setlength\abovedisplayskip{1ex}  
 \setlength\belowdisplayskip{1ex}
 \begin{align}
&\left| {{t^2}L{e^{ - {t^2}L}}\left( {{a_{Q_\alpha ^k}}} \right)(y)} \right|\nonumber\\
&\begin{array}{*{20}{c}}
{}&{}&{}
\end{array} \le \frac{{{C_5}}}{{V\left( {y,t} \right)}}{t^2}\ell {\left( {Q_\alpha ^k} \right)^{ - 2}}\mu {\left( {Q_\alpha ^k} \right)^{{{ - 1} \mathord{\left/
 {\vphantom {{ - 1} 2}} \right.
 \kern-\nulldelimiterspace} 2}}}\int_{3Q_\alpha ^k} {\exp \left( { - \frac{{d{{\left( {y,z} \right)}^2}}}{{{C_6}{t^2}}}} \right)d\mu \left( z \right)}.\nonumber\\
&\begin{array}{*{20}{c}}
{}&{}&{}
\end{array} \le C{t^2}\ell {\left( {Q_\alpha ^k} \right)^{ - 2}}\mu {\left( {Q_\alpha ^k} \right)^{{{ - 1} \mathord{\left/
 {\vphantom {{ - 1} 2}} \right.
 \kern-\nulldelimiterspace} 2}}}{\left( {1 + \ell {{(Q_\alpha ^k)}^{ - 1}}d( {y,z_\alpha ^k})} \right)^{ - N}},\nonumber
 \end{align}
 which together with the fact that $d\left(x, y\right) < t \le \ell(Q_{\alpha}^k)$ further implies that
 }
 {
 \setlength\abovedisplayskip{1ex}  
 \setlength\belowdisplayskip{1ex}
 \begin{align*}
&{\left( {\int_{d\left( {x,y} \right) < t} {|{t^2}L{e^{ - {t^2}L}}({a_{Q_\alpha ^k}})\left( y \right){|^2}\frac{{d\mu \left( y \right)}}{{V\left( {x,t} \right)}}} } \right)^{{1 \mathord{\left/
 {\vphantom {1 2}} \right.
 \kern-\nulldelimiterspace} 2}}}\\&
\begin{array}{*{20}{c}}
{}&{}&{}
\end{array} \le C{t^2}\ell {\left( {Q_\alpha ^k} \right)^{ - 2}}\mu {\left( {Q_\alpha ^k} \right)^{{{ - 1} \mathord{\left/
 {\vphantom {{ - 1} 2}} \right.
 \kern-\nulldelimiterspace} 2}}}{\left( {\int_{d\left( {x,y} \right) < t} {{{\left( {\frac{{\ell ( {Q_\alpha ^k} ) + d( {x,y})}}{{\ell ( {Q_\alpha ^k}) + d( {x,z_\alpha ^k} )}}} \right)}^{2N}}\frac{{d\mu \left( y \right)}}{{V\left( {x,t} \right)}}} } \right)^{{1 \mathord{\left/
 {\vphantom {1 2}} \right.
 \kern-\nulldelimiterspace} 2}}}\\&
\begin{array}{*{20}{c}}
{}&{}&{}
\end{array} \le C{t^2}\ell {\left( {Q_\alpha ^k} \right)^{ - 2}}\mu {\left( {Q_\alpha ^k} \right)^{{{ - 1} \mathord{\left/
 {\vphantom {{ - 1} 2}} \right.
 \kern-\nulldelimiterspace} 2}}}{\left( {1 + \ell {{( {Q_\alpha ^k} )}^{ - 1}}d(x,z_\alpha ^k)} \right)^{ - N}}.
 \end{align*}
 Therefore, by employing Lemma \ref{Lemma_3_2} once again, we get
 }
 {
 \setlength\abovedisplayskip{1ex}  
 \setlength\belowdisplayskip{1ex}
 \begin{align}\label{Proof_of_Theorem_3_2_05}
&{\left( {\int_{d\left( {x,y} \right) < t} {|{t^2}L{e^{ - {t^2}L}}(\sum\limits_{k \le j} {\sum\limits_{\alpha  \in {I_k}} {{s_{Q_\alpha ^k}}{a_{Q_\alpha ^k}}} } )\left( y \right){|^2}\frac{{d\mu \left( y \right)}}{{V\left( {x,t} \right)}}} } \right)^{{1 \mathord{\left/
 {\vphantom {1 2}} \right.
 \kern-\nulldelimiterspace} 2}}}\nonumber\\&
\begin{array}{*{20}{c}}
{}&{}&{}
\end{array} \le C\sum\limits_{k \le j} {\sum\limits_{\alpha  \in {I_k}} {{t^2}\ell {{\left( {Q_\alpha ^k} \right)}^{ - 2}}\mu {{\left( {Q_\alpha ^k} \right)}^{{{ - 1} \mathord{\left/
 {\vphantom {{ - 1} 2}} \right.
 \kern-\nulldelimiterspace} 2}}}|{s_{Q_\alpha ^k}}|{{\left( {1 + \ell {{(Q_\alpha ^k)}^{ - 1}}d(x,z_\alpha ^k)} \right)}^{ - N}}} } \nonumber\\&
\begin{array}{*{20}{c}}
{}&{}&{}
\end{array} \le C\sum\limits_{k \le j} {{\delta ^{2(j - k)}}\sum\limits_{\alpha  \in {I_k}} {\mu {{\left( {Q_\alpha ^k} \right)}^{{{ - 1} \mathord{\left/
 {\vphantom {{ - 1} 2}} \right.
 \kern-\nulldelimiterspace} 2}}}\frac{{|{s_{Q_\alpha ^k}}|}}{{{{\left[ {1 + \ell {{(Q_\alpha ^k)}^{ - 1}}d(x,z_\alpha ^k)} \right]}^N}}}} } \nonumber\\&
\begin{array}{*{20}{c}}
{}&{}&{}
\end{array} \le C\sum\limits_{k \le j} {{\delta ^{2(j - k)}}{{\left[ {{\cal M}\left( {\sum\limits_{\alpha  \in {I_k}} {|{s_{Q_\alpha ^k}}{|^q}\mu {{\left( {Q_\alpha ^k} \right)}^{{{ - q} \mathord{\left/
 {\vphantom {{ - q} 2}} \right.
 \kern-\nulldelimiterspace} 2}}}{\mathcal{X}_{Q_\alpha ^k}}\left(  \cdot  \right)} } \right)\left( x \right)} \right]}^{{1 \mathord{\left/
 {\vphantom {1 q}} \right.
 \kern-\nulldelimiterspace} q}}}}.
 \end{align}
 Set $F_k\left(x\right):={\cal M}\left( {\sum\nolimits_{\alpha  \in {I_k}} {|{s_{Q_\alpha ^k}}{|^q}\mu {{(Q_\alpha ^k)}^{{{ - q} \mathord{\left/
 {\vphantom {{ - q} 2}} \right.
 \kern-\nulldelimiterspace} 2}}}{\mathcal{X}_{Q_\alpha ^k}}( \cdot )} } \right)\left( x \right)$. For any fix $j \in \mathbb{Z}$, we let $\beta > 0$, $\tau = 1$ if $k > j$ and $\tau = -1$ if $k \le j$. We now turn to estimate ${\left( {\sum\nolimits_k {{\delta ^{\beta \tau (k - j)}}{F_k}{{\left( x \right)}^{{1 \mathord{\left/
 {\vphantom {1 q}} \right.
 \kern-\nulldelimiterspace} q}}}} } \right)^2}$ for the case either $k > j$ or $k \le j$. Since
 }
 $$
 \setlength\abovedisplayskip{1ex}  
 \setlength\belowdisplayskip{1ex}
 {\delta ^{\beta \tau \left( {k - j} \right)}} = \frac{{\beta {\delta ^\beta }}}{{1 - {\delta ^\beta }}}\int_{{\delta ^{\tau \left( {k - j} \right)}}}^{{\delta ^{\tau \left( {k - j} \right) - 1}}} {{s^{\beta  - 1}}ds},
 $$
 we let $E_k:=\left[{{\delta ^{\tau \left( {k - j} \right)}}}, {{\delta ^{\tau \left( {k - j} \right)-1}} }\right]$ and it follows that
 {
 \setlength\abovedisplayskip{1ex}  
 \setlength\belowdisplayskip{1ex}
 \begin{align*}
{\left( {\sum\nolimits_k {{\delta ^{\beta \tau (k - j)}}{F_k}{{\left( x \right)}^{{1 \mathord{\left/
 {\vphantom {1 q}} \right.
 \kern-\nulldelimiterspace} q}}}} } \right)^2} &= {C_\beta }{\left( {\sum\limits_k {\int_{{\delta ^{\tau \left( {k - j} \right)}}}^{{\delta ^{\tau \left( {k - j} \right) - 1}}} {{F_k}{{\left( x \right)}^{{1 \mathord{\left/
 {\vphantom {1 q}} \right.
 \kern-\nulldelimiterspace} q}}}{s^{\beta  - 1}}ds} } } \right)^2}\nonumber\\
& = {C_\beta }{\left( {\int_0^1 {\sum\limits_k {{\chi _{{E_k}}}(s){F_k}{{\left( x \right)}^{{1 \mathord{\left/
 {\vphantom {1 q}} \right.
 \kern-\nulldelimiterspace} q}}}{s^{\beta  - 1}}} ds} } \right)^2}\nonumber\\
& \le {C_\beta }\left( {\int_0^1 {{s^{\beta  - 1}}ds} } \right)\left( {\int_0^1 {{{\left( {\sum\nolimits_k {{\chi _{{E_k}}}(s){F_k}{{\left( x \right)}^{{1 \mathord{\left/
 {\vphantom {1 q}} \right.
 \kern-\nulldelimiterspace} q}}}} } \right)}^2}{s^{\beta  - 1}}ds} } \right)\nonumber\\
& \le {C_\beta }\int_0^1 {{{\left( {\sum\nolimits_k {{\chi _{{E_k}}}(s){F_k}{{\left( x \right)}^{{1 \mathord{\left/
 {\vphantom {1 q}} \right.
 \kern-\nulldelimiterspace} q}}}} } \right)}^2}{s^{\beta  - 1}}ds} \nonumber\\
& = {C_\beta }\sum\limits_k {\int_{{\delta ^{\tau \left( {k - j} \right)}}}^{{\delta ^{\tau \left( {k - j} \right) - 1}}} {{F_k}{{\left( x \right)}^{{2 \mathord{\left/
 {\vphantom {2 q}} \right.
 \kern-\nulldelimiterspace} q}}}{s^{\beta  - 1}}ds} } \nonumber\\
\end{align*}
 \begin{align}\label{Proof_of_Theorem_3_2_06}
 & = {C_\beta }\sum\nolimits_k {{\delta ^{\beta \tau (k - j)}}{F_k}{{\left( x \right)}^{{2 \mathord{\left/
 {\vphantom {2 q}} \right.
 \kern-\nulldelimiterspace} q}}}}.
 \end{align}
 Combining now \eqref{Proof_of_Theorem_3_2_03}-\eqref{Proof_of_Theorem_3_2_06}, we get the following estimate for $S_L\left(f\right)$,
 }
{
 \setlength\abovedisplayskip{1ex}  
 \setlength\belowdisplayskip{1ex}
\begin{align*}
{S_L}\left( f \right)\left( x \right) &\le {\left( {\sum\limits_{j \in \mathbb{Z}} {\int_{{\delta ^j}}^{{\delta ^{j - 1}}} {\int_{d\left( {x,y} \right) < t} {|{t^2}L{e^{ - {t^2}L}}(\sum\limits_{k > j} {\sum\limits_{\alpha  \in {I_k}} {{s_{Q_\alpha ^k}}{a_{Q_\alpha ^k}}} } )\left( x \right){|^2}\frac{{d\mu \left( y \right)}}{{V\left( {x,t} \right)}}} \frac{{dt}}{t}} } } \right)^{{1 \mathord{\left/
 {\vphantom {1 2}} \right.
 \kern-\nulldelimiterspace} 2}}}\\&
\begin{array}{*{20}{c}}
{}&{}&{}
\end{array} + {\left( {\sum\limits_{j \in \mathbb{Z}} {\int_{{\delta ^j}}^{{\delta ^{j - 1}}} {\int_{d\left( {x,y} \right) < t} {|{t^2}L{e^{ - {t^2}L}}(\sum\limits_{k \le j} {\sum\limits_{\alpha  \in {I_k}} {{s_{Q_\alpha ^k}}{a_{Q_\alpha ^k}}} } )\left( x \right){|^2}\frac{{d\mu \left( y \right)}}{{V\left( {x,t} \right)}}} \frac{{dt}}{t}} } } \right)^{{1 \mathord{\left/
 {\vphantom {1 2}} \right.
 \kern-\nulldelimiterspace} 2}}}\\
 &\le C\left( {\sum\limits_{j \in \mathbb{Z}} {\int_{{\delta ^j}}^{{\delta ^{j - 1}}} {|\sum\limits_{k > j} {{\delta ^{\left( {2M - N} \right)\left( {k - j} \right)}}{F_k}{{\left( x \right)}^{{1 \mathord{\left/
 {\vphantom {1 q}} \right.
 \kern-\nulldelimiterspace} q}}}} {|^2}} \frac{{dt}}{t}} } \right.\\&
\begin{array}{*{20}{c}}
{}&{}&{}&{}
\end{array}{\left. { + \sum\limits_{j \in \mathbb{Z}} {\int_{{\delta ^j}}^{{\delta ^{j - 1}}} {|\sum\limits_{k \le j} {{\delta ^{2\left( {j - k} \right)}}{F_k}{{\left( x \right)}^{{1 \mathord{\left/
 {\vphantom {1 q}} \right.
 \kern-\nulldelimiterspace} q}}}} {|^2}} \frac{{dt}}{t}} } \right)^{{1 \mathord{\left/
 {\vphantom {1 2}} \right.
 \kern-\nulldelimiterspace} 2}}}\\
 &\le C\left( {\sum\limits_{j \in \mathbb{Z}} {\int_{{\delta ^j}}^{{\delta ^{j - 1}}} {\sum\limits_{k > j} {{\delta ^{\left( {2M - N} \right)\left( {k - j} \right)}}{F_k}{{\left( x \right)}^{{2 \mathord{\left/
 {\vphantom {2 q}} \right.
 \kern-\nulldelimiterspace} q}}}} } \frac{{dt}}{t}} } \right.\\&
\begin{array}{*{20}{c}}
{}&{}&{}&{}
\end{array}{\left. { + \sum\limits_{j \in \mathbb{Z}} {\int_{{\delta ^j}}^{{\delta ^{j - 1}}} {\sum\limits_{k \le j} {{\delta ^{2\left( {j - k} \right)}}{F_k}{{\left( x \right)}^{{2 \mathord{\left/
 {\vphantom {2 q}} \right.
 \kern-\nulldelimiterspace} q}}}} } \frac{{dt}}{t}} } \right)^{{1 \mathord{\left/
 {\vphantom {1 2}} \right.
 \kern-\nulldelimiterspace} 2}}}\\
& = C{\left( {\sum\limits_{k \in \mathbb{Z}} {{F_k}{{\left( x \right)}^{{2 \mathord{\left/
 {\vphantom {2 q}} \right.
 \kern-\nulldelimiterspace} q}}}(\sum\limits_{j > k} {{\delta ^{\left( {2M - N} \right)\left( {k - j} \right)}}}  + \sum\limits_{j \ge k} {{\delta ^{2\left( {j - k} \right)}}} )} } \right)^{{1 \mathord{\left/
 {\vphantom {1 2}} \right.
 \kern-\nulldelimiterspace} 2}}}\\
&\le C{\left( {\sum\limits_{k \in \mathbb{Z}} {{F_k}{{\left( x \right)}^{{2 \mathord{\left/
 {\vphantom {2 q}} \right.
 \kern-\nulldelimiterspace} q}}}} } \right)^{{1 \mathord{\left/
 {\vphantom {1 2}} \right.
 \kern-\nulldelimiterspace} 2}}}.
\end{align*}
where we used \eqref{Proof_of_Theorem_3_2_06} with $\beta = 2M-N$, $\tau = 1$ in the first sum and \eqref{Proof_of_Theorem_3_2_06} with $\beta = 2$, $\tau = -1$ in the second sum. Using this bound, we apply Corollary \ref{Corollary_Property_of_Growth_Functions_04} to get,
}
{
 \setlength\abovedisplayskip{1ex}  
 \setlength\belowdisplayskip{1ex}
\begin{align*}
&\int_X {\varphi \left( {x,{{{S_L}\left( f \right)(x)} \mathord{\left/
 {\vphantom {{{G_L}\left( f \right)(x)} \lambda }} \right.
 \kern-\nulldelimiterspace} \lambda }} \right)d\mu \left( x \right)} \\&
\begin{array}{*{20}{c}}
{}&{}&{}&{}
\end{array} \le C\int_X {\varphi \left( {x,{\lambda ^{ - 1}}{{\left( {\sum\nolimits_{k \in \mathbb{Z}} {{F_k}{{\left( x \right)}^{{2 \mathord{\left/
 {\vphantom {2 q}} \right.
 \kern-\nulldelimiterspace} q}}}} } \right)}^{{1 \mathord{\left/
 {\vphantom {1 2}} \right.
 \kern-\nulldelimiterspace} 2}}}} \right)d\mu \left( x \right)} \\&
\begin{array}{*{20}{c}}
{}&{}&{}&{}
\end{array} \le C\int_X {\varphi \left( {x,{\lambda ^{ - 1}}{{\left( {\sum\nolimits_{k \in \mathbb{Z}} {{{\left( {\sum\nolimits_{\alpha  \in {I_k}} {|{s_{Q_\alpha ^k}}{|^q}\mu {{\left( {Q_\alpha ^k} \right)}^{{{ - q} \mathord{\left/
 {\vphantom {{ - q} 2}} \right.
 \kern-\nulldelimiterspace} 2}}}{{\cal X}_{Q_\alpha ^k}}\left( x \right)} } \right)}^{{2 \mathord{\left/
 {\vphantom {2 q}} \right.
 \kern-\nulldelimiterspace} q}}}} } \right)}^{{1 \mathord{\left/
 {\vphantom {1 2}} \right.
 \kern-\nulldelimiterspace} 2}}}} \right)d\mu \left( x \right)} \\&
\begin{array}{*{20}{c}}
{}&{}&{}&{}
\end{array} = C\int_X {\varphi \left( {x,{\lambda ^{ - 1}}{{\left( {\sum\nolimits_{k \in \mathbb{Z}} {\sum\nolimits_{\alpha  \in {I_k}} {|{s_{Q_\alpha ^k}}{|^2}\mu {{\left( {Q_\alpha ^k} \right)}^{ - 1}}{{\cal X}_{Q_\alpha ^k}}\left( x \right)} } } \right)}^{{1 \mathord{\left/
 {\vphantom {1 2}} \right.
 \kern-\nulldelimiterspace} 2}}}} \right)d\mu \left( x \right)} \\&
\begin{array}{*{20}{c}}
{}&{}&{}&{}
\end{array} = C\int_X {\varphi \left( {x,{{{W_f}\left( x \right)} \mathord{\left/
 {\vphantom {{{W_f}\left( x \right)} \lambda }} \right.
 \kern-\nulldelimiterspace} \lambda }} \right)d\mu \left( x \right)},
\end{align*}
which proves the $\le$ inequality in \eqref{Proof_of_Theorem_3_2_01} and completes the proof of the theorem.
}

\end{proofoftheorem}

We now turn to characterize the Musielark-Orlicz Hardy space ${H_{L,G,\varphi }}$ and have the following result.
\begin{theorem}{}\label{Theorem_3_1}
    Suppose $L$ is an operator that satisfies \rm{(\textbf{H1})} \emph{and} \rm{(\textbf{H2})}. \emph{Let $\varphi$ be a growth function with uniformly lower type $p_1$ and $f \in {H_{L,G,\varphi }}\left( X \right) \cap {L^2}\left( X \right)$, then for all natural number $M > {{nq\left( \varphi  \right)} \mathord{\left/
 {\vphantom {{nq\left( \varphi  \right)} {\left( {2{p_1}} \right)}}} \right.
 \kern-\nulldelimiterspace} {\left( {2{p_1}} \right)}}$, $f$ has an \textbf{AT}$_{\mathcal{L}, M}$-expansion such that}
    $$
    \setlength\abovedisplayskip{1ex}  
    \setlength\belowdisplayskip{1ex}
    {\left\| f \right\|_{{H_{L,G,\varphi }\left(X\right)}}} \cong {\left\| {{W_f}} \right\|_{{L^\varphi \left(X\right)}}}.$$
\end{theorem}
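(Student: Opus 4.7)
The plan is to mirror the proof of Theorem~\ref{Theorem_3_2}, replacing the Lusin area function $S_L$ by the Littlewood--Paley function $G_L$ throughout. Exactly as in the opening of that proof, the claim $\|f\|_{H_{L,G,\varphi}(X)}\cong\|W_f\|_{L^\varphi(X)}$ is equivalent to the modular equivalence
\[
\int_X \varphi\!\left(x,\frac{G_L(f)(x)}{\lambda}\right)d\mu(x) \cong \int_X \varphi\!\left(x,\frac{W_f(x)}{\lambda}\right)d\mu(x),\qquad \lambda\in(0,\infty),
\]
and the same uniform-upper-type manipulation then reduces the proof to establishing this equivalence for all $\lambda$.

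For the $\le$ direction I would copy the second half of Theorem~\ref{Theorem_3_2}'s proof almost verbatim. Using the \textbf{AT}$_{\mathcal{L},M}$-expansion $f=\sum_{k,\alpha}s_{Q_\alpha^k}a_{Q_\alpha^k}$ (Proposition~\ref{Pro.ATLM_Family}) and splitting $G_L(f)(x)^2=\sum_j\int_{\delta^j}^{\delta^{j-1}}|t^2Le^{-t^2L}f(x)|^2\,dt/t$ into the atom-ranges $k>j$ (for which one writes $a_{Q_\alpha^k}=L^Mb_{Q_\alpha^k}$ to gain a factor $(\ell(Q_\alpha^k)/t)^{2M}$) and $k\le j$ (for which one writes $t^2Le^{-t^2L}(a_{Q_\alpha^k})=t^2e^{-t^2L}(La_{Q_\alpha^k})$ to gain a factor $(t/\ell(Q_\alpha^k))^2$), the Gaussian estimate~\eqref{Eq.GE} and~\eqref{Eq.Doubling_Int_Properties} yield the very same pointwise bounds for $|t^2Le^{-t^2L}a_{Q_\alpha^k}(x)|$ obtained in the proof of Theorem~\ref{Theorem_3_2}. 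In fact those bounds are originally proved pointwise in $y$ and only afterwards averaged over $B(x,t)$ in the Lusin cone, so for $G_L$ we may read them off at $y=x$ with no intermediate averaging step. Lemma~\ref{Lemma_3_2}, the geometric-series identity~\eqref{Proof_of_Theorem_3_2_06} (with $\beta=2M-N,\tau=1$ and $\beta=2,\tau=-1$), and a final application of Corollary~\ref{Corollary_Property_of_Growth_Functions_04} then produce the desired inequality.

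For the $\ge$ direction, note that the pointwise bound $W_f(x)\le CS_{L,\delta^{-2}}(f)(x)$ established in the first half of Theorem~\ref{Theorem_3_2}'s proof rests only on the geometry $\mu(Q_x^k)\cong V(x,\delta^k)$ and does not invoke $S_L$ in any essential way. To reach $G_L$ instead, I would bound $W_f$ by Hardy--Littlewood averaging on each $Q_x^k$:
\[
W_f(x)^2\le C\sum_{k\in\mathbb{Z}}\mathcal{M}\!\bigl(\tilde G_L^k(f)^2\bigr)(x),\qquad \tilde G_L^k(f)(y)^2:=\int_{\delta^{k+1}}^{\delta^k}|t^2Le^{-t^2L}f(y)|^2\,\frac{dt}{t},
\]
where the components $\tilde G_L^k(f)^2$ sum vertically to $G_L(f)^2$. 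Applying Corollary~\ref{Corollary_Property_of_Growth_Functions_04} with $p=1$, $f_k=\tilde G_L^k(f)^2$, and a small $r\in(0,p_1/q(\varphi))$ yields $\int_X\varphi(x,[\sum_k\mathcal{M}(\tilde G_L^k f^2)]^{1/r})d\mu\le C\int_X\varphi(x,(G_L f)^{2/r})d\mu$; composing this with the pointwise bound on $W_f$ and absorbing the residual factors via the uniform upper type of $\varphi$ furnishes $\int_X\varphi(x,W_f/\lambda)d\mu\le C\int_X\varphi(x,G_L f/\lambda)d\mu$.

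The main obstacle is matching the exponents in this last step. Since a growth function has $p_1\le1\le q(\varphi)$, the direct choice $r=1,\,p=2$ that would place the natural $[\,\cdot\,]^{1/2}$ outside the Hardy--Littlewood sum is forbidden by the constraint $r<p_1/q(\varphi)$ in Corollary~\ref{Corollary_Property_of_Growth_Functions_04}. The intended fix is to replace $\varphi$ by an auxiliary growth function $\psi(x,t):=\varphi(x,t^{r/2})$: it inherits $\mathbb{A}_{q(\varphi)}$-membership from $\varphi$ and has uniformly lower and upper types $rp_1/2$ and $rp_2/2$, so Corollary~\ref{Corollary_Property_of_Growth_Functions_04} applied to $\psi$ yields the square-root exponent on the maximal sum in the $\psi$-modular, which then translates back through the substitution $t\leftrightarrow t^{2/r}$ to the desired bound in the $\varphi$-modular. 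Verifying that $\psi$ is indeed a growth function with a critical index compatible with the square-root exponent --- which is precisely where the hypothesis $M>nq(\varphi)/(2p_1)$ propagates into this direction of the argument --- is the most delicate technical point of the proof.
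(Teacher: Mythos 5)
Your $\le$ direction (bounding $G_L(f)$ by $W_f$) is correct and matches the paper: the pointwise kernel bounds in Theorem~\ref{Theorem_3_2}'s proof are indeed first derived at a single point $y$, so reading them off at $y=x$ (with no cone averaging) and repeating the $k>j$/$k\le j$ split, Lemma~\ref{Lemma_3_2}, the geometric series trick~\eqref{Proof_of_Theorem_3_2_06}, and Corollary~\ref{Corollary_Property_of_Growth_Functions_04} is exactly what the paper does.

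Your $\ge$ direction, however, has a genuine gap. The pointwise bound $W_f(x)^2\le C\sum_k\mathcal{M}\bigl(\tilde G_L^k(f)^2\bigr)(x)$ is fine, but you then need a vector-valued maximal inequality with the summation index carried by an $\ell^1$ norm inside the outer power. Both Lemma~\ref{Lemma_Properties_of_Growth_Functions_04} and Corollary~\ref{Corollary_Property_of_Growth_Functions_04} are stated, and can only be true, for $p\in(1,\infty]$: the $\ell^1$-valued Fefferman--Stein inequality $\|\sum_k\mathcal{M}g_k\|\lesssim\|\sum_k g_k\|$ is false (take $g_k=\chi_{[k,k+1]}$ for $1\le k\le N$ and let $N\to\infty$; the left side picks up an extra $\log N$). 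Your auxiliary growth function $\psi(x,t)=\varphi(x,t^{r/2})$ does not repair this: replacing $\varphi$ by $\psi$ only trades one Musielak--Orlicz modular for another, but Corollary~\ref{Corollary_Property_of_Growth_Functions_04} applied to $\psi$ still carries the constraint $p>1$ on the inner sequence index, so no choice of $r$ lets you treat $\sum_k\mathcal{M}(g_k)$ without a power $>1$ inside. Moreover, if you did try to push the arithmetic, Lemma~\ref{Lemma_Properties_of_Growth_Functions_04} applied to $\psi$ would require the lower type $rp_1/2$ of $\psi$ to exceed $q(\psi)=q(\varphi)\ge1$, forcing $r>2q(\varphi)/p_1\ge2$, which is incompatible with $\psi$ remaining a growth function ($rp_2/2\le1$, i.e.\ $r\le2/p_2\le2$). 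The paper sidesteps this entirely: it passes through the Fefferman--Stein--type maximal function $M^*_{a,L}$, uses Lemma~\ref{Lemma_3_1} (from \citep{Guorong:2016}) to bound $|M^*_{a,L}(f)(x,2^{-k}t)|^r$ by an integral kernel against $|(2^{-j}t)^2Le^{-(2^{-j}t)^2L}f|^r$ for $r<1$, then applies Lemma~\ref{Lemma_3_3} so that the resulting estimate has the form $W_f(x)\le C\bigl(\sum_k\mathcal{M}(F_k)(x)^{2/r}\bigr)^{1/2}$ with inner index $2/r>1$; only then is Corollary~\ref{Corollary_Property_of_Growth_Functions_04} applicable. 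Your sketch of this direction omits precisely the two lemmas (Lemmas~\ref{Lemma_3_1} and~\ref{Lemma_3_3}) that make the maximal-function argument legitimate, and the simpler averaging you propose cannot substitute for them.
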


We prove Theorem \ref{Theorem_3_1} by borrowing some ideas from \citet[Proof of Theorem 3.2]{Duong:2016}. To this end, we start with listing some known facts as follows.

Given $f \in L^2 \left( X \right)$, $a > 0$ and $\left( x, t\right) \in X \times \left( {0,\infty } \right)$, the Fefferman-Stein-type maximal function is defined as
$$
\setlength\abovedisplayskip{1ex}  
\setlength\belowdisplayskip{1ex}
M_{a,L}^*\left( f \right)\left( {x,t} \right) = \mathop {{\rm{esssup}}}\limits_{y \in X} \frac{{|{t^2}L{e^{ - {t^2}L}}f(y)|}}{{{{\left[ {1 + {t^{ - 1}}d\left( {x,y} \right)} \right]}^a}}},
$$
and we have the following Lemma \ref{Lemma_3_1} which was established in \citep{Guorong:2016}.
\begin{lemma}{}\label{Lemma_3_1}
Assume that L satisfies \rm{(\textbf{H1})} and \rm{(\textbf{H2})}. \emph{Let $m$ be as in \eqref{Eq.Doubling_dim_of_X_02}. Then, for any $\beta > 0$, $r > 0$ and $a > {m \mathord{\left/
 {\vphantom {m 2}} \right.
 \kern-\nulldelimiterspace} 2}$, there exist a positive constant $C$ such that for all $f \in L^2 \left( X \right)$, $l \in \mathbb{Z}$, $x \in X$ and $t \in \left[ 1, 2\right)$,}
 $$
 \setlength\abovedisplayskip{1ex}  
 \setlength\belowdisplayskip{1ex}
{\left| {M_{a,L}^*\left( f \right)\left( {x,{2^{ - l}}t} \right)} \right|^r} \le C\sum\limits_{j = l}^\infty  {{2^{ - \left( {j - l} \right)\beta r}}\int_X {\frac{{|{{({2^{ - j}}t)}^2}L{e^{ - {{({2^{ - j}}t)}^2}L}}f(z){|^r}}}{{V(z,{2^{ - l}}){{[1 + {2^l}d(x,z)]}^{ar}}}}d\mu \left( z \right)} } .$$
\end{lemma}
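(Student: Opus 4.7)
The plan is to prove the Peetre-type maximal inequality in three main steps: a Calderón reproducing formula adapted to the heat semigroup of $L$, Gaussian-type kernel bounds for the resulting compositions, and a passage from a weighted-$L^1$ bound to the weighted-$L^r$ form with supremum in $y$.

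First, write $\psi(\lambda):=\lambda e^{-\lambda}$ so that $s^2 L e^{-s^2 L} = \psi(s^2 L)$, and set $s := 2^{-l}t$, $s_j := 2^{-j}t$ for $j \ge l$. Using the spectral theorem for the non-negative self-adjoint $L$ on $L^2(X)$, fix an integer $K=K(\beta)$ and use the Calderón reproducing formula
\[
f = c_K \int_0^\infty (r^2 L)^{K+1} e^{-2r^2 L} f\,\frac{dr}{r}
\]
on $\overline{R(L)}$, which loses nothing on the $\psi(s^2L)$-side since $\psi(s^2 L)$ annihilates $\ker L$. Applying $\psi(s^2L)$, grouping one factor as $\psi(r^2 L)$ and the remaining part as $\Phi_K(r^2 L)$ with $\Phi_K(\lambda)=\lambda^K e^{-\lambda}$, and splitting the $r$-integral into dyadic pieces $r \in [s_{j+1},s_j]$ for $j \ge l$ (the complementary contribution $r > s$ is handled analogously and absorbed into the $j=l$ term), one obtains
\[
\psi(s^2 L)f(y) = c_K \sum_{j \ge l}\int_{s_{j+1}}^{s_j}\bigl[\psi(s^2 L)\Phi_K(r^2 L)\bigr]\psi(r^2 L)f(y)\,\frac{dr}{r}.
\]

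Second, I would establish the kernel bound for the composition $\psi(s^2 L)\Phi_K(r^2 L)$. Its spectral symbol is $s^2 r^{2K}\lambda^{K+1} e^{-(s^2+r^2)\lambda}$, which for $r \le s$ factors as a bounded multiple of $(r/s)^{2K}\cdot(\tau^2\lambda)^{K+1}e^{-\tau^2\lambda}$ with $\tau^2=s^2+r^2\asymp s^2$. The Gaussian estimate (H2) together with standard holomorphic functional calculus upgrades this to a polynomial-decay kernel bound: for any $N > 0$,
\[
\bigl|K_{(\tau^2 L)^{K+1}e^{-\tau^2 L}}(y,z)\bigr| \le C_N\,V(y,\tau)^{-1}\bigl[1+\tau^{-1}d(y,z)\bigr]^{-N}.
\]
Combining this with the $(r/s)^{2K}$ prefactor, using $V(y,\tau) \asymp V(y,s)$ and $\tau\asymp s$ via doubling, integrating in $r\in[s_{j+1},s_j]$ where $r\asymp s_j$, and noting that $|\psi(r^2 L)f(z)|$ is comparable to $|\psi(s_j^2 L)f(z)|$ uniformly on this range (via a short semigroup perturbation argument), one arrives at the pointwise bound
\[
|\psi(s^2 L)f(y)| \le C\sum_{j\ge l} 2^{-2K(j-l)} \int_X \frac{|\psi(s_j^2 L)f(z)|}{V(y,s)\,[1+s^{-1}d(y,z)]^N}\,d\mu(z).
\]
Choosing $2K>\beta$ yields the desired $2^{-(j-l)\beta}$ decay.

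Third, I would raise to the $r$-th power and take the essential supremum in $y$ with weight $[1+s^{-1}d(x,y)]^{-a}$. Using the triangle-type inequality $[1+s^{-1}d(x,z)]\le[1+s^{-1}d(x,y)][1+s^{-1}d(y,z)]$ together with $V(y,s)^{-1}\le C[1+s^{-1}d(y,z)]^m V(z,s)^{-1}$ (from \eqref{Eq.Doubling_dim_of_X_02}), one sees that for $N$ sufficiently large in terms of $a$, $m$, and $r$, the factor $V(y,s)^{-1}[1+s^{-1}d(y,z)]^{-N}[1+s^{-1}d(x,y)]^{-a}$ is dominated uniformly in $y$ by $V(z,s)^{-1}[1+s^{-1}d(x,z)]^{-a}$, so that taking the supremum in $y$ collapses all $y$-dependence. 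For $r\ge 1$, the $r$-th power passes inside the integral by H\"older's inequality applied to the kernel of uniformly bounded total mass (via \eqref{Eq.Doubling_Int_Properties}). For $0<r<1$, one decomposes $z$ into dyadic annuli about $x$, applies subadditivity of $t\mapsto t^r$, and sums a convergent geometric series. Since $s\asymp 2^{-l}$ and $s^{-1}\asymp 2^l$ for $t\in[1,2)$, $V(z,s)$ is equivalent to $V(z,2^{-l})$ by doubling, producing the stated right-hand side. The main obstacle is the $r<1$ step: the inequality $(\int K|g|\,d\mu)^r\le\int K|g|^r\,d\mu$ fails in general, and one must carefully choose $K$ (and hence $N$) large enough to absorb the loss $2^{(j-l)n/r}$ arising from the annular decomposition, which is precisely where the hypothesis $a>m/2$ becomes indispensable in ensuring integrability of the weight $V(z,2^{-l})^{-1}[1+2^ld(x,z)]^{-ar}$.
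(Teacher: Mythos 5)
The paper does not actually prove Lemma \ref{Lemma_3_1}; it quotes it from \citet{Guorong:2016}, so your outline has to be measured against the standard argument given there (a Peetre--Rychkov type proof). Your skeleton --- Calder\'on reproducing formula for the semigroup plus polynomial-decay kernel bounds for compositions such as $\psi(s^2L)\Phi_K(r^2L)$ --- is the right starting point, but the outline fails exactly at the delicate points. One gap is the discretization of scales: the assertion that $|\psi(r^2L)f(z)|$ is ``comparable to $|\psi(s_j^2L)f(z)|$ uniformly for $r\in[s_{j+1},s_j]$'' is false pointwise; no semigroup perturbation argument gives a two-sided pointwise comparison of $r^2Le^{-r^2L}f(z)$ and $s_j^2Le^{-s_j^2L}f(z)$ (one can vanish at $z$ while the other does not). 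One must either keep the $r$-integral, or pull out the fixed scales $s_j$ through the maximal function $M^*_{a,L}(f)(x,s_j)$ itself. Similarly, the portion $r>s$ of the reproducing formula cannot be ``absorbed into the $j=l$ term'' while $\psi(r^2L)f$ at scales larger than $2^{-l}t$ is still present (those scales do not occur on the right-hand side); it must first be integrated in $r$, which collapses it to a single operator $g(s^2L)$ that is then controlled at scale $s$.

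The more serious gap is the case $0<r<1$. Dyadic annuli plus subadditivity of $u\mapsto u^r$ cannot convert $\bigl(\int K|g|\,d\mu\bigr)^r$ into $\int K^r|g|^r\,d\mu$: on each annulus the required inequality still goes the wrong way unless one has a local $L^\infty$ bound on $g$, and that bound is precisely the maximal function being estimated. The standard mechanism (and the one used in the cited source) is self-improvement: write $|\psi(s_j^2L)f(z)|=|\psi(s_j^2L)f(z)|^{r}\,|\psi(s_j^2L)f(z)|^{1-r}$, dominate the second factor by $[M^*_{a,L}(f)(x,s_j)]^{1-r}[1+s_j^{-1}d(x,z)]^{a(1-r)}$, and absorb the resulting maximal functions into the left-hand side after an a priori finiteness argument (e.g.\ a truncated supremum over $j$, with finiteness for $f\in L^2(X)$ supplied precisely by the hypothesis $a>m/2$). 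Relatedly, your closing claim that $a>m/2$ ensures integrability of the weight $V(z,2^{-l})^{-1}[1+2^{l}d(x,z)]^{-ar}$ cannot be right: for small $r$ one has $ar<m$, and such integrability is neither available nor needed; the role of $a>m/2$ is in the finiteness/absorption step just described. Without that step the proposal does not yield the lemma for small $r$, which is exactly the range used later in the proof of Theorem \ref{Theorem_3_1}.
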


Moreover, we also need the following Lemma, whose proof is standard, we omit the details. And in what follows, we recall that the Hardy-Littlewood maximal operator $\mathcal{M}$ on $\left( X, \mu, d\right)$ is defined by
$$
\setlength\abovedisplayskip{1ex}
\setlength\belowdisplayskip{1ex}
\mathcal{M}\left( f \right)\left( x\right): = \mathop {\sup }\limits_{B \ni x} \frac{1}{{\mu\left( B \right)}}\int_B {\left| {f\left( y \right)} \right|d\mu \left( y \right)},
$$
where the supremum is taken over all balls $B \ni x$.
\begin{lemma}{}\label{Lemma_3_3}
Let $n$ and $m$ be as in \eqref{Eq.Doubling_dim_of_X_01} and \eqref{Eq.Doubling_dim_of_X_02}, and suppose that $N > n + m$. Then there exists a constant $C > 0$ such that for all measurable functions $f$ on $\left( X, \mu, d\right)$, $t > 0$ and each $y \in X$,
$$
 \setlength\abovedisplayskip{1ex}
 \setlength\belowdisplayskip{1ex}
\int_X {\frac{{\left| {f\left( x \right)} \right|}}{{V\left( {x,t} \right){{\left[ {1 + {t^{ - 1}}d\left( {x,y} \right)} \right]}^N}}}d\mu \left( x \right)}  \le C\mathcal{M}\left( f \right)\left( y \right) .
$$
\end{lemma}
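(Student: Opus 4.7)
The plan is to perform a dyadic annular decomposition of $X$ centered at the point $y$. Set $A_0 := B(y,t)$ and for $k \ge 1$ let $A_k := B(y, 2^k t) \setminus B(y, 2^{k-1} t)$, so that $X = \bigcup_{k \ge 0} A_k$. On $A_k$ the weight gains a power decay, namely $[1 + t^{-1} d(x,y)]^{-N} \le 2^{-(k-1)N}$ for $k \ge 1$ and $\le 1$ for $k=0$. So the problem reduces to controlling, for each $k$, the quantity $\int_{A_k} |f(x)|\, V(x,t)^{-1}\, d\mu(x)$ up to the factor $2^{-kN}$.

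The main analytical step is to trade $V(x,t)^{-1}$ for $V(y,t)^{-1}$ using \eqref{Eq.Doubling_dim_of_X_02}. For $x \in A_k$ we have $d(x,y) < 2^k t$, so taking $r = R = t$ in \eqref{Eq.Doubling_dim_of_X_02} gives
$$ V(y,t) \le C_D\left[\frac{t + d(x,y)}{t}\right]^m V(x,t) \le C\, 2^{km}\, V(x,t), $$
i.e.\ $V(x,t)^{-1} \le C\, 2^{km}\, V(y,t)^{-1}$ uniformly for $x \in A_k$. Combining this with the strong doubling inequality \eqref{Eq.Doubling_dim_of_X_01}, which gives $V(y, 2^k t) \le C\, 2^{kn}\, V(y,t)$, and with the trivial bound
$$ \int_{B(y, 2^k t)} |f(x)|\, d\mu(x) \le V(y, 2^k t)\, \mathcal{M}(f)(y) $$
(valid since $y \in B(y, 2^k t)$), the contribution of $A_k$ to the original integral is bounded by
$$ C\, 2^{-kN} \cdot 2^{km}\, V(y,t)^{-1} \cdot 2^{kn}\, V(y,t)\, \mathcal{M}(f)(y) = C\, 2^{k(n+m-N)}\, \mathcal{M}(f)(y). $$

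To conclude I would sum over $k \ge 0$. The geometric series $\sum_{k \ge 0} 2^{k(n+m-N)}$ converges precisely because of the hypothesis $N > n + m$, which makes the exponent strictly negative; this pins down exactly why the assumption has the form it does. I do not anticipate any serious obstacle; the argument is really a bookkeeping exercise combining the two forms of doubling, \eqref{Eq.Doubling_dim_of_X_01} and \eqref{Eq.Doubling_dim_of_X_02}, with the definition of $\mathcal{M}$. The minor subtlety is the $k = 0$ term, where the weight is only comparable to $1$ rather than small; this is handled identically, since on $A_0 = B(y,t)$ one has $V(x,t) \gtrsim V(y,t)$ by a single use of \eqref{Eq.Doubling_dim_of_X_02}, whence $\int_{A_0} |f(x)|/V(x,t)\, d\mu(x) \le C\, \mathcal{M}(f)(y)$ directly.
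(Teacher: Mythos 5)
Your argument is correct: the dyadic annular decomposition around $y$, the bounds $[1+t^{-1}d(x,y)]^{-N}\le C2^{-kN}$ on each annulus, the comparison $V(x,t)^{-1}\le C2^{km}V(y,t)^{-1}$ from \eqref{Eq.Doubling_dim_of_X_02}, the growth bound $V(y,2^kt)\le C2^{kn}V(y,t)$ from \eqref{Eq.Doubling_dim_of_X_01}, and the summability of $\sum_k 2^{k(n+m-N)}$ under $N>n+m$ all check out, including the $k=0$ term. The paper omits the proof of this lemma as standard, and what you have written is exactly that standard argument, so there is nothing to compare beyond noting that you have supplied the details the authors chose to skip.
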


\begin{proofoftheorem}{\ref{Theorem_3_1}}
  For any fixed $f \in {H_{L,G,\varphi }}\left( X \right) \cap {L^2}\left( X \right)$, we let ${\lambda _0} = {\left\| f \right\|_{{H_{L,G,\varphi }}\left(X\right)}}$ and ${\lambda _1} = {\left\| {{W_f}} \right\|_{{L^\varphi }\left(X\right)}}$. It suffices to show that for all $\lambda \in \left(0, \infty \right)$,
  {
  \setlength\abovedisplayskip{1ex}  
  \setlength\belowdisplayskip{1ex}
  \begin{equation}\label{Proof_of_Theorem_3_1_01}
    \int_X {\varphi \left( {x,\frac{{{G_L}\left( f \right)\left( x \right)}}{\lambda }} \right)d\mu \left( x \right)}  \cong \int_X {\varphi \left( {x,\frac{{\left| {{W_f}\left( x \right)} \right|}}{\lambda }} \right)d\mu \left( x \right)}.
  \end{equation}
  In fact, if \eqref{Proof_of_Theorem_3_1_01} holds for all $\lambda \in \left(0, \infty \right)$, then there exists a constant $C_0$ such that
  }
  $$
  \setlength\abovedisplayskip{1ex}  
  \setlength\belowdisplayskip{1ex}
  \int_X {\varphi \left( {x,\frac{{{G_L}\left( f \right)\left( x \right)}}{\lambda _1 }} \right)d\mu \left( x \right)}  \le {C_0}\int_X {\varphi \left( {x,\frac{{\left| {{W_f}\left( x \right)} \right|}}{{{\lambda _1}}}} \right)d\mu \left( x \right)}  \le {C_0} ,
  $$
  which, together with \eqref{Eq.Property_of_Growth_Function}, implies that
  $$
  \setlength\abovedisplayskip{1ex}  
  \setlength\belowdisplayskip{1ex}
  \int_X {\varphi \left( {x,\frac{{{G_L}\left( f \right)\left( x \right)}}{{{C_1}{\lambda _1}}}} \right)d\mu \left( x \right)}  \le 1
  $$
  for some constants $C_1$, and hence we have $\lambda _0 \le C_1 \lambda _1$. In a similar fashion, one can prove $\lambda _1 \le C_2 \lambda _0$ for some constants $C_2$, and get the desired property.

  Now we fix arbitrary $\lambda \in \left(0, \infty\right)$ and prove \eqref{Proof_of_Theorem_3_1_01}. In the view of Lemma \ref{Lemma_Decompositon_of_Space}, for any fixed $\left( x, k\right) \in X \times \mathbb{Z}$ there exists a unique $\alpha \in I_{k}$, such that $x \in Q_\alpha ^k$. Let $Q_x ^k$ denote the such $Q_\alpha ^k$ and we write
  {
  \setlength\abovedisplayskip{1ex}  
  \setlength\belowdisplayskip{1ex}
\begin{align}\label{Eq.W_F_01}
{W_f}\left( x \right)
 &= {\left\{ {\sum\limits_{k \in \mathbb{Z}} {\sum\limits_{\alpha  \in {I_k}} {{{\left[ {\mu{{\left( {Q_\alpha ^k} \right)}^{ - {1 \mathord{\left/
 {\vphantom {1 2}} \right.
 \kern-\nulldelimiterspace} 2}}}\left| {{s_{Q_\alpha ^k}}} \right|{\mathcal{X} _{Q_\alpha ^k}}\left( x \right)} \right]}^2}} } } \right\}^{{1 \mathord{\left/
 {\vphantom {1 2}} \right.
 \kern-\nulldelimiterspace} 2}}} \nonumber\\
 &= {\left\{ {\sum\limits_{k \in \mathbb{Z}} {\mu{{\left( {Q_x^k} \right)}^{ - 1}}{{\left| {{s_{Q_x^k}}} \right|}^2}} } \right\}^{{1 \mathord{\left/
 {\vphantom {1 2}} \right.
 \kern-\nulldelimiterspace} 2}}}\nonumber\\
 &= {\left\{ {\sum\limits_{k \in \mathbb{Z}} {\int_{{\delta ^{k + 1}}}^{{\delta ^k}} {\mu{{\left( {Q_x^k} \right)}^{ - 1}}\int_{Q_x^k} {{{\left| {{t^2}L{e^{ - {t^2}L}}f\left( y \right)} \right|}^2}d\mu \left( y \right)\frac{{dt}}{t}} } } } \right\}^{{1 \mathord{\left/
 {\vphantom {1 2}} \right.
 \kern-\nulldelimiterspace} 2}}},
\end{align}
where $\delta \in \left( 0, 1\right)$ is a constant as in Lemma \ref{Lemma_Decompositon_of_Space} and the last quantity follows from Proposition \ref{Pro.ATLM_Family}. Moreover, by $\left( {iv} \right)$ and $\left( {v} \right)$ of Lemma \ref{Lemma_Decompositon_of_Space}, we know that for any fixed $\left( x, k\right) \in X \times \mathbb{Z}$ there exists $z_x^k \in Q_x^k$ and constants $C_3 \in \left( 0, 1\right)$, $C_4 > 0$ such that ${\rm{diam}}\left( {Q_\alpha ^k} \right) \le {C_4}{\delta ^k}$ and
}
$$
\setlength\abovedisplayskip{1ex}  
\setlength\belowdisplayskip{1ex}
B\left( {z_x^k,{C_3}{\delta ^k}} \right) \subset Q_x^k \subset B\left( {x,{C_4}{\delta ^k}} \right) \subset B\left( {x,{C_4}{\delta ^{ - 1}}t} \right): = {B_x}
$$
for all $t \in \left( {{\delta ^{k + 1}},{\delta ^k}} \right)$. Then for each $k \in \mathbb{Z}$ we compute by \eqref{Eq.Doubling_dim_of_X_01} and \eqref{Eq.Doubling_dim_of_X_02},
{
\setlength\abovedisplayskip{1ex}  
\setlength\belowdisplayskip{1ex}
\begin{align}\label{Eq.W_F_02}
&\int_{{\delta ^{k + 1}}}^{{\delta ^k}} {\mu {{\left( {Q_x^k} \right)}^{ - 1}}\int_{Q_x^k} {{{\left| {{t^2}L{e^{ - {t^2}L}}f\left( y \right)} \right|}^2}d\mu \left( y \right)\frac{{dt}}{t}} } \nonumber\\&
\begin{array}{*{20}{c}}
{}&{}&{}&{}
\end{array}\le \int_{{\delta ^{k + 1}}}^{{\delta ^k}} {\mu {{\left( {B\left( {z_x^k,{C_3}{\delta ^k}} \right)} \right)}^{ - 1}}\int_{B(x,{C_4}{\delta ^k})} {{{\left| {{t^2}L{e^{ - {t^2}L}}f\left( y \right)} \right|}^2}d\mu \left( y \right)\frac{{dt}}{t}} } \nonumber\\&
\begin{array}{*{20}{c}}
{}&{}&{}&{}
\end{array}\le C\int_{{\delta ^{k + 1}}}^{{\delta ^k}} {\frac{{\mu \left( {B\left( {x,{C_4}{\delta ^k}} \right)} \right)}}{{\mu \left( {B\left( {z_x^k,{C_3}{\delta ^k}} \right)} \right)}}\mathop {{\rm{esssup}}}\limits_{y \in B(x,{C_4}{\delta ^k})} {{\left| {{t^2}L{e^{ - {t^2}L}}f\left( y \right)} \right|}^2}\frac{{dt}}{t}} \nonumber\\&
\begin{array}{*{20}{c}}
{}&{}&{}&{}
\end{array}\le C\int_{{\delta ^{k + 1}}}^{{\delta ^k}} {\mathop {{\rm{esssup}}}\limits_{y \in {B_x}} {{\left| {{t^2}L{e^{ - {t^2}L}}f\left( y \right)} \right|}^2}\frac{{dt}}{t}} \nonumber\\&
\begin{array}{*{20}{c}}
{}&{}&{}&{}
\end{array}\le C\int_{{\delta ^{k + 1}}}^{{\delta ^k}} {{{\left[ {M_{a,L}^*\left( f \right)\left( {x,t} \right)} \right]}^2}\frac{{dt}}{t}}
\end{align}
for some appropriate constant $C$, where $M_{a ,L}^ * \left( f \right)\left( {x,t} \right)$ is the Fefferman-Stein-type maximal function, with some large enough constants $a$ to be selected. And the last inequality follows from
}\\
{
\setlength\abovedisplayskip{1ex}  
\setlength\belowdisplayskip{1ex}
\begin{align*}
\mathop {{\rm{esssup}}}\limits_{y \in {B_x}} {\left| {{t^2}L{e^{ - {t^2}L}}f\left( y \right)} \right|^2}
&= \mathop {{\rm{esssup}}}\limits_{y \in {B_x}} \frac{{|{t^2}L{e^{ - {t^2}L}}f\left( y \right){|^2}}}{{{{[1 + {t^{ - 1}}d\left( {x,y} \right)]}^{2a}}}}{\left[ {1 + {t^{ - 1}}d\left( {x,y} \right)} \right]^{2a}}\\
&\le \left( {1 + {C_1}{\delta ^{ - 1}}} \right){\left[ {M_{a,L}^*\left( f \right)\left( {x,t} \right)} \right]^2}.
\end{align*}
Combining now \eqref{Eq.W_F_01}-\eqref{Eq.W_F_02}, we have the following estimate of $W_{f}\left(x\right)$,
}
{
\setlength\abovedisplayskip{1ex}  
\setlength\belowdisplayskip{1ex}
\begin{align*}
    {W_f}\left( x \right)
    &\le C{\left\{ {\int_0^\infty  {{{\left[ {M_{a,L}^ * \left( f \right)\left( {x,t} \right)} \right]}^2}\frac{{dt}}{t}} } \right\}^{{1 \mathord{\left/
 {\vphantom {1 2}} \right.
 \kern-\nulldelimiterspace} 2}}}\\
    &= C{\left\{ {\sum\limits_{k \in \mathbb{Z}} {\int_{{2^{-k}}}^{{2^{-k + 1}}} {{{\left[ {M_{a,L}^ * \left( f \right)\left( {x,t} \right)} \right]}^2}\frac{{dt}}{t}} } } \right\}^{{1 \mathord{\left/
 {\vphantom {1 2}} \right.
 \kern-\nulldelimiterspace} 2}}}\\
    &= C{\left\{ {\sum\limits_{k \in \mathbb{Z}} {\int_1^2 {{{\left[ {M_{a,L}^ * \left( f \right)\left( {x,{2^{-k}}t} \right)} \right]}^2}\frac{{dt}}{t}} } } \right\}^{{1 \mathord{\left/
 {\vphantom {1 2}} \right.
 \kern-\nulldelimiterspace} 2}}}.
\end{align*}
In the view of Lemma \ref{Lemma_3_1}, we see that for any $\beta > 0, r > 0$ and $a > m / 2$, there exists a constant $C$ such that for all $f \in L^{2}\left(X\right), k \in \mathbb{Z}, x \in X$ and $t \in \left[1, 2\right)$,
}
{
 \setlength\abovedisplayskip{1ex}  
 \setlength\belowdisplayskip{1ex}
\begin{equation}\label{Eq.Lemma_3_1}
{\left| {M_{a,L}^*\left( f \right)\left( {x,{2^{ - k}}t} \right)} \right|^r} \le C\sum\limits_{j = k}^\infty  {{2^{ - \left( {j - k} \right)\beta r}}\int_X {\frac{{|{{({2^{ - j}}t)}^2}L{e^{ - {{({2^{ - j}}t)}^2}L}}f(z){|^r}}}{{V(z,{2^{ - k}}){{[1 + {2^k}d(x,z)]}^{ar}}}}d\mu \left( z \right)} }.
\end{equation}
Let $r \in \left(0, 1\right)$ be as in Corollary \ref{Corollary_Property_of_Growth_Functions_04}, with $p = {2 \mathord{\left/
 {\vphantom {2 r}} \right.
 \kern-\nulldelimiterspace} r} > 1$. Fix some $\beta > 0$ and choose $a > {m \mathord{\left/
 {\vphantom {m 2}} \right.
 \kern-\nulldelimiterspace} 2}$ such that $ar > m + n$. We then take the norm ${\left[ {\int_1^2 {{{\left|  \cdot  \right|}^{{2 \mathord{\left/
 {\vphantom {2 r}} \right.
 \kern-\nulldelimiterspace} r}}}\frac{{dt}}{t}} } \right]^{{r \mathord{\left/
 {\vphantom {r 2}} \right.
 \kern-\nulldelimiterspace} 2}}}$
} on the both sides of \eqref{Eq.Lemma_3_1}, and employ the Minkowski's inequality to get
{
 \setlength\abovedisplayskip{1ex}  
 \setlength\belowdisplayskip{1ex}
\begin{align}
&{\left[ {\int_1^2 {{{\left| {M_{a,L}^*\left( f \right)\left( {x,{2^{ - k}}t} \right)} \right|}^2}\frac{{dt}}{t}} } \right]^{{r \mathord{\left/
 {\vphantom {r 2}} \right.
 \kern-\nulldelimiterspace} 2}}}\nonumber\\&
 \begin{array}{*{20}{c}}
{}&{}
\end{array}\le C{\left\{ {\int_1^2 {{{\left| {\sum\limits_{j = k}^\infty  {{2^{ - \left( {j - k} \right)\beta r}}\int_X {\frac{{|{{({2^{ - j}}t)}^2}L{e^{ - {{({2^{ - j}}t)}^2}L}}f(z){|^r}}}{{V(z,{2^{ - k}}){{[1 + {2^k}d(x,z)]}^{ar}}}}d\mu \left( z \right)} } } \right|}^{{2 \mathord{\left/
 {\vphantom {2 r}} \right.
 \kern-\nulldelimiterspace} r}}}\frac{{dt}}{t}} } \right\}^{{r \mathord{\left/
 {\vphantom {r 2}} \right.
 \kern-\nulldelimiterspace} 2}}}\nonumber\\&
 \begin{array}{*{20}{c}}
{}&{}
\end{array} = C\sum\limits_{j = k}^\infty  {{2^{ - \left( {j - k} \right)\beta r}}{{\left\{ {\int_1^2 {{{\left| {\int_X {\frac{{|{{({2^{ - j}}t)}^2}L{e^{ - {{({2^{ - j}}t)}^2}L}}f(z){|^r}}}{{V(z,{2^{ - k}}){{[1 + {2^k}d(x,z)]}^{ar}}}}d\mu \left( z \right)} } \right|}^{{2 \mathord{\left/
 {\vphantom {2 r}} \right.
 \kern-\nulldelimiterspace} r}}}\frac{{dt}}{t}} } \right\}}^{{r \mathord{\left/
 {\vphantom {r 2}} \right.
 \kern-\nulldelimiterspace} 2}}}} \nonumber\\&
 \begin{array}{*{20}{c}}
{}&{}
\end{array} \le C\sum\limits_{j = k}^\infty  {{2^{ - \left( {j - k} \right)\beta r}}\int_X {\frac{{{{[\int_1^2 {|{{({2^{ - j}}t)}^2}L{e^{ - {{({2^{ - j}}t)}^2}L}}f(z){|^2}{{dt} \mathord{\left/
 {\vphantom {{dt} t}} \right.
 \kern-\nulldelimiterspace} t}} ]}^{{r \mathord{\left/
 {\vphantom {r 2}} \right.
 \kern-\nulldelimiterspace} 2}}}}}{{V(z,{2^{ - k}}){{[1 + {2^k}d(x,z)]}^{ar}}}}d\mu \left( z \right)} } \nonumber\\&
 \begin{array}{*{20}{c}}
{}&{}
\end{array} = C\int_X {\frac{{\sum\nolimits_{j = k}^\infty  {{2^{ - \left( {j - k} \right)\beta r}}{{[\int_1^2 {|{{({2^{ - j}}t)}^2}L{e^{ - {{({2^{ - j}}t)}^2}L}}f(z){|^2}{{dt} \mathord{\left/
 {\vphantom {{dt} t}} \right.
 \kern-\nulldelimiterspace} t}} ]}^{{r \mathord{\left/
 {\vphantom {r 2}} \right.
 \kern-\nulldelimiterspace} 2}}}} }}{{V(z,{2^{ - k}}){{[1 + {2^k}d(x,z)]}^{ar}}}}d\mu \left( z \right)} \nonumber\\&
 \begin{array}{*{20}{c}}
{}&{}
\end{array} \le C\mathcal{M}\left\{ {\sum\nolimits_{j = k}^\infty  {{2^{ - \left( {j - k} \right)\beta r}}{{\left[\int_1^2 {|{{({2^{ - j}}t)}^2}L{e^{ - {{({2^{ - j}}t)}^2}L}}f( \cdot ){|^2}{{dt} \mathord{\left/
 {\vphantom {{dt} t}} \right.
 \kern-\nulldelimiterspace} t}} \right]}^{{r \mathord{\left/
 {\vphantom {r 2}} \right.
 \kern-\nulldelimiterspace} 2}}}} } \right\}\left( x \right) \nonumber \\&
 \begin{array}{*{20}{c}}
{}&{}
\end{array} : = C\mathcal{M}\left( {{F_k}} \right)\left( x \right),\nonumber
\end{align}
where ${F_k}\left( x \right): = \sum\nolimits_{j = k}^\infty  {{2^{ - \left( {j - k} \right)\beta r}}{{[\int_1^2 {|{{({2^{ - j}}t)}^2}L{e^{ - {{({2^{ - j}}t)}^2}L}}f(x){|^2}{{dt} \mathord{\left/
 {\vphantom {{dt} t}} \right.
 \kern-\nulldelimiterspace} t}} ]}^{{r \mathord{\left/
 {\vphantom {r 2}} \right.
 \kern-\nulldelimiterspace} 2}}}}$, and the last inequality follows from Lemma \ref{Lemma_3_3}. It follows that
}
{
 \setlength\abovedisplayskip{1ex}  
 \setlength\belowdisplayskip{1ex}
\begin{eqnarray}\label{Proof_of_Theorem_3_1_02}
&&\int_X {\varphi \left( {x,\frac{{\left| {{W_f}} \right|}}{\lambda }} \right)d\mu \left( x \right)} \nonumber\\
 &\le& \int_X {\varphi \left( {x,{{C{{\left( {\sum\limits_{k \in \mathbb{Z}} {\int_1^2 {{{\left[ {M_{a,L}^*\left( f \right)\left( {x,{2^{ - k}}t} \right)} \right]}^2}{{dt} \mathord{\left/
 {\vphantom {{dt} t}} \right.
 \kern-\nulldelimiterspace} t}} } } \right)}^{{1 \mathord{\left/
 {\vphantom {1 2}} \right.
 \kern-\nulldelimiterspace} 2}}}} \mathord{\left/
 {\vphantom {{C{{\left( {\sum\limits_{k \in Z} {\int_1^2 {{{\left[ {M_{a,L}^*\left( f \right)\left( {x,{2^{ - k}}t} \right)} \right]}^2}{{dt} \mathord{\left/
 {\vphantom {{dt} t}} \right.
 \kern-\nulldelimiterspace} t}} } } \right)}^{{1 \mathord{\left/
 {\vphantom {1 2}} \right.
 \kern-\nulldelimiterspace} 2}}}} \lambda }} \right.
 \kern-\nulldelimiterspace} \lambda }} \right)d\mu \left( x \right)} \nonumber\\
 &=& \int_X {\varphi \left( {x,{{C{{\left( {\sum\limits_{k \in \mathbb{Z}} {{{\left[ {{{\left( {\int_1^2 {{{\left[ {M_{a,L}^*\left( f \right)\left( {x,{2^{ - k}}t} \right)} \right]}^2}{{dt} \mathord{\left/
 {\vphantom {{dt} t}} \right.
 \kern-\nulldelimiterspace} t}} } \right)}^{{r \mathord{\left/
 {\vphantom {r 2}} \right.
 \kern-\nulldelimiterspace} 2}}}} \right]}^{{2 \mathord{\left/
 {\vphantom {2 r}} \right.
 \kern-\nulldelimiterspace} r}}}} } \right)}^{{1 \mathord{\left/
 {\vphantom {1 2}} \right.
 \kern-\nulldelimiterspace} 2}}}} \mathord{\left/
 {\vphantom {{C{{\left( {\sum\limits_{k \in \mathbb{Z}} {{{\left[ {{{\left( {\int_1^2 {{{\left[ {M_{a,L}^*\left( f \right)\left( {x,{2^{ - k}}t} \right)} \right]}^2}{{dt} \mathord{\left/
 {\vphantom {{dt} t}} \right.
 \kern-\nulldelimiterspace} t}} } \right)}^{{r \mathord{\left/
 {\vphantom {r 2}} \right.
 \kern-\nulldelimiterspace} 2}}}} \right]}^{{2 \mathord{\left/
 {\vphantom {2 r}} \right.
 \kern-\nulldelimiterspace} r}}}} } \right)}^{{1 \mathord{\left/
 {\vphantom {1 2}} \right.
 \kern-\nulldelimiterspace} 2}}}} \lambda }} \right.
 \kern-\nulldelimiterspace} \lambda }} \right)d\mu \left( x \right)} \nonumber\\
 &\le& \int_X {\varphi \left( {x,{{C{{\left( {\sum\limits_{k \in \mathbb{Z}} {{{\left[ {\mathcal{M}\left( {{F_k}} \right)\left( x \right)} \right]}^{{2 \mathord{\left/
 {\vphantom {2 r}} \right.
 \kern-\nulldelimiterspace} r}}}} } \right)}^{{1 \mathord{\left/
 {\vphantom {1 2}} \right.
 \kern-\nulldelimiterspace} 2}}}} \mathord{\left/
 {\vphantom {{C{{\left( {\sum\limits_{k \in \mathbb{Z}} {{{\left[ {M\left( {{F_k}} \right)\left( x \right)} \right]}^{{2 \mathord{\left/
 {\vphantom {2 r}} \right.
 \kern-\nulldelimiterspace} r}}}} } \right)}^{{1 \mathord{\left/
 {\vphantom {1 2}} \right.
 \kern-\nulldelimiterspace} 2}}}} \lambda }} \right.
 \kern-\nulldelimiterspace} \lambda }} \right)d\mu \left( x \right)} \nonumber\\
 &\le& C\int_X {\varphi \left( {x,{{{{\left( {\sum\limits_{k \in \mathbb{Z}} {{F_k}{{\left( x \right)}^{{2 \mathord{\left/
 {\vphantom {2 r}} \right.
 \kern-\nulldelimiterspace} r}}}} } \right)}^{{1 \mathord{\left/
 {\vphantom {1 2}} \right.
 \kern-\nulldelimiterspace} 2}}}} \mathord{\left/
 {\vphantom {{{{\left( {\sum\limits_{k \in Z} {{F_k}{{\left( x \right)}^{{2 \mathord{\left/
 {\vphantom {2 r}} \right.
 \kern-\nulldelimiterspace} r}}}} } \right)}^{{1 \mathord{\left/
 {\vphantom {1 2}} \right.
 \kern-\nulldelimiterspace} 2}}}} \lambda }} \right.
 \kern-\nulldelimiterspace} \lambda }} \right)d\mu \left( x \right)},
\end{eqnarray}
where we used the Corollary \ref{Corollary_Property_of_Growth_Functions_04} in the last inequality.
}

We now turn to estimate ${F_k}{\left( x \right)^{{2 \mathord{\left/
 {\vphantom {2 r}} \right.
 \kern-\nulldelimiterspace} r}}}$. For any $k \in \mathbb{Z}$, we recall
 $$
 \setlength\abovedisplayskip{1ex}  
 \setlength\belowdisplayskip{1ex}
{F_k}\left( x \right) = \sum\nolimits_{j = k}^\infty  {{2^{ - \left( {j - k} \right)\beta r}}{{\left[ {\int_1^2 {|{{({2^{ - j}}t)}^2}L{e^{ - {{({2^{ - j}}t)}^2}L}}f(x){|^2}{{dt} \mathord{\left/
 {\vphantom {{dt} t}} \right.
 \kern-\nulldelimiterspace} t}} } \right]}^{{r \mathord{\left/
 {\vphantom {r 2}} \right.
 \kern-\nulldelimiterspace} 2}}}}.
 $$
 Since
 $$
 \setlength\abovedisplayskip{1ex}  
 \setlength\belowdisplayskip{1ex}
 {2^{ - \left( {j - k} \right)\beta r}} = \frac{{\beta r}}{{1 - {2^{ - \beta r}}}}\int_{{2^{j - k}}}^{{2^{j - k + 1}}} {{s^{ - \beta r - 1}}ds},
 $$we let $E_{j}:= {{\left[ {{2^{j - k}},{2^{j - k + 1}}} \right]}}$ and it follows that
 {
 \setlength\abovedisplayskip{1ex}  
 \setlength\belowdisplayskip{1ex}
 \begin{align}
{F_k}\left( x \right)
&=C\sum\nolimits_{j = k}^\infty  {\int_{{2^{j - k}}}^{{2^{j - k + 1}}} {{{\left[ {\int_1^2 {{\rm{|}}{{({2^{ - j}}t)}^2}L{e^{ - {{({2^{ - j}}t)}^2}L}}f\left( x \right){{\rm{|}}^2}\frac{{dt}}{t}} } \right]}^{{r \mathord{\left/
 {\vphantom {r 2}} \right.
 \kern-\nulldelimiterspace} 2}}}\frac{{ds}}{{{s^{\beta r + 1}}}}} } \nonumber\\
&=C\sum\nolimits_{j = k}^\infty  {\int_1^\infty  {{{\left[ {\int_1^2 {{\rm{|}}{{({2^{ - j}}t)}^2}L{e^{ - {{({2^{ - j}}t)}^2}L}}f\left( x \right){{\rm{|}}^2}\frac{{dt}}{t}} } \right]}^{{r \mathord{\left/
 {\vphantom {r 2}} \right.
 \kern-\nulldelimiterspace} 2}}}{\mathcal{X}_{{E_j}}}\left( s \right)\frac{{ds}}{{{s^{\beta r + 1}}}}} } \nonumber\\
&=C\int_1^\infty  {\sum\nolimits_{j = k}^\infty  {{{\left[ {\int_1^2 {{\rm{|}}{{({2^{ - j}}t)}^2}L{e^{ - {{({2^{ - j}}t)}^2}L}}f\left( x \right){{\rm{|}}^2}\frac{{dt}}{t}} } \right]}^{{r \mathord{\left/
 {\vphantom {r 2}} \right.
 \kern-\nulldelimiterspace} 2}}}{\mathcal{X}_{{E_j}}}\left( s \right)\frac{{ds}}{{{s^{\beta r + 1}}}}} }. \nonumber
 \end{align}
 We then apply the H\"{o}lder's inequality to obtain
 }
 {
 \setlength\abovedisplayskip{1ex}  
 \setlength\belowdisplayskip{1ex}
 \begin{align}\label{Proof_of_Theorem_3_1_06}
{F_k}{\left( x \right)^{{2 \mathord{\left/
 {\vphantom {2 r}} \right.
 \kern-\nulldelimiterspace} r}}}
 &= {\left[ {C\int_1^\infty  {\sum\nolimits_{j = k}^\infty  {{{\left[ {\int_1^2 {|{{({2^{ - j}}t)}^2}L{e^{ - {{({2^{ - j}}t)}^2}L}}f\left( x \right){|^2}\frac{{dt}}{t}} } \right]}^{{r \mathord{\left/
 {\vphantom {r 2}} \right.
 \kern-\nulldelimiterspace} 2}}}{{\cal X}_{{E_j}}}\left( s \right)\frac{{ds}}{{{s^{\beta r + 1}}}}} } } \right]^{{2 \mathord{\left/
 {\vphantom {2 r}} \right.
 \kern-\nulldelimiterspace} r}}}\nonumber\\
 &\le C{\left( {\int_1^\infty  {{s^{ - \beta r - 1}}ds} } \right)^{{4 \mathord{\left/
 {\vphantom {4 {r\left( {2 - r} \right)}}} \right.
 \kern-\nulldelimiterspace} {r\left( {2 - r} \right)}}}}\nonumber\\&
 \begin{array}{*{20}{c}}
{}&{}
\end{array}\times \int_1^\infty  {{{\left( {\sum\nolimits_{j = k}^\infty  {{{\left[ {\int_1^2 {|{{({2^{ - j}}t)}^2}L{e^{ - {{({2^{ - j}}t)}^2}L}}f\left( x \right){|^2}\frac{{dt}}{t}} } \right]}^{{r \mathord{\left/
 {\vphantom {r 2}} \right.
 \kern-\nulldelimiterspace} 2}}}{{\cal X}_{{E_j}}}\left( s \right)} } \right)}^{{2 \mathord{\left/
 {\vphantom {2 r}} \right.
 \kern-\nulldelimiterspace} r}}}\frac{{ds}}{{{s^{\beta r + 1}}}}} \nonumber\\
 &= C\int_1^\infty  {\left( {\sum\nolimits_{j = k}^\infty  {\left( {\int_1^2 {|{{({2^{ - j}}t)}^2}L{e^{ - {{({2^{ - j}}t)}^2}L}}f\left( x \right){|^2}\frac{{dt}}{t}} } \right){{\cal X}_{{E_j}}}\left( s \right)} } \right)\frac{{ds}}{{{s^{\beta r + 1}}}}}\nonumber\\
 &= C\sum\nolimits_{j = k}^\infty  {\left( {\int_{{2^{j - k}}}^{{2^{j - k + 1}}} {\frac{{ds}}{{{s^{\beta r + 1}}}}\int_1^2 {|{{({2^{ - j}}t)}^2}L{e^{ - {{({2^{ - j}}t)}^2}L}}f\left( x \right){|^2}\frac{{dt}}{t}} } } \right)} \nonumber\\
 &= C\sum\nolimits_{j = k}^\infty  {\left( {{2^{ - \left( {j - k} \right)\beta r}}\int_1^2 {|{{({2^{ - j}}t)}^2}L{e^{ - {{({2^{ - j}}t)}^2}L}}f\left( x \right){|^2}\frac{{dt}}{t}} } \right)}.
 \end{align}
 Summation by all $k \in \mathbb{Z}$, we have
 }
 {
 \setlength\abovedisplayskip{1ex}  
 \setlength\belowdisplayskip{1ex}
 \begin{align*}
\sum\limits_{k \in \mathbb{Z}} {{F_k}{{\left( x \right)}^{{2 \mathord{\left/
 {\vphantom {2 r}} \right.
 \kern-\nulldelimiterspace} r}}}}
 &\le C\sum\limits_{k \in \mathbb{Z}} {\sum\limits_{j \ge k} {\left( {{2^{ - \left( {j - k} \right)\beta r}}\int_1^2 {|{{({2^{ - j}}t)}^2}L{e^{ - {{({2^{ - j}}t)}^2}L}}f\left( x \right){|^2}\frac{{dt}}{t}} } \right)} } \\
 &= C\sum\limits_{j \in \mathbb{Z}} {\sum\limits_{k \le j} {\left( {{2^{ - \left( {j - k} \right)\beta r}}\int_1^2 {|{{({2^{ - j}}t)}^2}L{e^{ - {{({2^{ - j}}t)}^2}L}}f\left( x \right){|^2}\frac{{dt}}{t}} } \right)} } \\
 &= C{(1 - {2^{ - \beta r}})^{ - 1}}\sum\limits_{j \in Z} {\int_1^2 {|{{({2^{ - j}}t)}^2}L{e^{ - {{({2^{ - j}}t)}^2}L}}f\left( x \right){|^2}\frac{{dt}}{t}} } \\
 &= C\sum\limits_{j \in \mathbb{Z}} {\int_{{2^{ - j}}}^{{2^{ - j + 1}}} {|{t^2}L{e^{ - {t^2}L}}f\left( x \right){|^2}\frac{{dt}}{t}} } \\
 &= C\int_0^\infty  {|{t^2}L{e^{ - {t^2}L}}f\left( x \right){|^2}\frac{{dt}}{t}} \\
 &= C{G_L}\left( f \right){\left( x \right)^2},
 \end{align*}
 which, together with \eqref{Proof_of_Theorem_3_1_02} and \eqref{Eq.Property_of_Growth_Function}, yields the $\ge$ inequality in \eqref{Proof_of_Theorem_3_1_01}.
 }

 It remains to establish the reverse inequality. In the view of Proposition \ref{Pro.ATLM_Family}, we write
 $f = \sum\limits_{k \in \mathbb{Z}} {\sum\limits_{\alpha  \in {I_k}} {{s_{Q_\alpha ^k}}{a_{Q_\alpha ^k}}} }$. Let $\delta$ be as in Lemma \ref{Lemma_Decompositon_of_Space} we get
 {
 \setlength\abovedisplayskip{1ex}  
 \setlength\belowdisplayskip{1ex}
 \begin{align}\label{Proof_of_Theorem_3_1_03}
{G_L}\left( f \right)\left( x \right) =& {\left( {\int_0^\infty  {|{t^2}L{e^{ - {t^2}L}}\left( f \right)\left( x \right){|^2}\frac{{dt}}{t}} } \right)^{{1 \mathord{\left/
 {\vphantom {1 2}} \right.
 \kern-\nulldelimiterspace} 2}}}\nonumber\\
 =& {\left( {\int_0^\infty  {|{t^2}L{e^{ - {t^2}L}}(\sum\limits_{k \in \mathbb{Z}} {\sum\limits_{\alpha  \in {I_k}} {{s_{Q_\alpha ^k}}{a_{Q_\alpha ^k}}} } )\left( x \right){|^2}\frac{{dt}}{t}} } \right)^{{1 \mathord{\left/
 {\vphantom {1 2}} \right.
 \kern-\nulldelimiterspace} 2}}}\nonumber\\
 =& {\left( {\sum\limits_{j \in \mathbb{Z}} {\int_{{\delta^{ j}}}^{{\delta^{ j - 1}}} {|{t^2}L{e^{ - {t^2}L}}(\sum\limits_{k \in \mathbb{Z}} {\sum\limits_{\alpha  \in {I_k}} {{s_{Q_\alpha ^k}}{a_{Q_\alpha ^k}}} } )\left( x \right){|^2}\frac{{dt}}{t}} } } \right)^{{1 \mathord{\left/
 {\vphantom {1 2}} \right.
 \kern-\nulldelimiterspace} 2}}}\nonumber\\
 \le& {\left( {\sum\limits_{j \in \mathbb{Z}} {\int_{{\delta^{ j}}}^{{\delta^{  j - 1}}} {|{t^2}L{e^{ - {t^2}L}}(\sum\limits_{k > j} {\sum\limits_{\alpha  \in {I_k}} {{s_{Q_\alpha ^k}}{a_{Q_\alpha ^k}}} } )\left( x \right){|^2}\frac{{dt}}{t}} } } \right)^{{1 \mathord{\left/
 {\vphantom {1 2}} \right.
 \kern-\nulldelimiterspace} 2}}}\nonumber\\&
\begin{array}{*{20}{c}}
{}&{}&{}&{}
\end{array} + {\left( {\sum\limits_{j \in \mathbb{Z}} {\int_{{\delta^{  j}}}^{{\delta^{  j - 1}}} {|{t^2}L{e^{ - {t^2}L}}(\sum\limits_{k \le j} {\sum\limits_{\alpha  \in {I_k}} {{s_{Q_\alpha ^k}}{a_{Q_\alpha ^k}}} } )\left( x \right){|^2}\frac{{dt}}{t}} } } \right)^{{1 \mathord{\left/
 {\vphantom {1 2}} \right.
 \kern-\nulldelimiterspace} 2}}}.
 \end{align}

 We now estimate the first part of \eqref{Proof_of_Theorem_3_1_03}. For any $k > j$ and $\alpha \in I_k$, noting that $a_{Q_{\alpha}^k} = L^Mb_{Q_{\alpha}^k}$, we write
 }
 $$
 \setlength\abovedisplayskip{1ex}  
 \setlength\belowdisplayskip{1ex}
 \left| {{t^2}L{e^{ - {t^2}L}}\left( {{a_{Q_\alpha ^k}}} \right)(x)} \right| = \left| {{t^2}{L^{M + 1}}{e^{ - {t^2}L}}\left( {{b_{Q_\alpha ^k}}} \right)(x)} \right| = {t^{ - 2M}}\left| {{{\left( {{t^2}L} \right)}^{M + 1}}{e^{ - {t^2}L}}\left( {{b_{Q_\alpha ^k}}} \right)(x)} \right|.
 $$

 Let $n$ be as in \eqref{Eq.Doubling_dim_of_X_01}, since $M > {{nq\left( \varphi  \right)} \mathord{\left/
 {\vphantom {{nq\left( \varphi  \right)} {\left( {2{p_1}} \right)}}} \right.
 \kern-\nulldelimiterspace} {\left( {2{p_1}} \right)}}$, we can choose some $q = r$ with $r$ be as in Corollary \ref{Corollary_Property_of_Growth_Functions_04} such that $2M > {n \mathord{\left/
 {\vphantom {n q}} \right.
 \kern-\nulldelimiterspace} q}$. We then let $N$ be some positive number such that $2M > N > {n \mathord{\left/
 {\vphantom {n q}} \right.
 \kern-\nulldelimiterspace} q}$. Then by Definition \ref{Def.AT_LM_Family}, the upper bound of the kernel of $\left(t^2L\right)^{M+1}e^{-t^2L}$ and \eqref{Eq.Doubling_Int_Properties}, we get
 {
 \setlength\abovedisplayskip{1ex}  
 \setlength\belowdisplayskip{1ex}
 \begin{align}
&\left| {{t^2}L{e^{ - {t^2}L}}\left( {{a_{Q_\alpha ^k}}} \right)(x)} \right|\nonumber\\&
\begin{array}{*{20}{c}}
{}&{}&{}
\end{array} \le \frac{{{C_5}}}{{V\left( {x,t} \right)}}{t^{ - 2M}}\ell {\left( {Q_\alpha ^k} \right)^{2M}}\mu {\left( {Q_\alpha ^k} \right)^{{{ - 1} \mathord{\left/
 {\vphantom {{ - 1} 2}} \right.
 \kern-\nulldelimiterspace} 2}}}\int_{3Q_\alpha ^k} {\exp \left( { - \frac{{d{{\left( {x,y} \right)}^2}}}{{{C_6}{t^2}}}} \right)} d\mu \left( y \right)\nonumber\\&
\begin{array}{*{20}{c}}
{}&{}&{}
\end{array} \le C{t^{ - 2M}}\ell {\left( {Q_\alpha ^k} \right)^{2M}}\mu {\left( {Q_\alpha ^k} \right)^{{{ - 1} \mathord{\left/
 {\vphantom {{ - 1} 2}} \right.
 \kern-\nulldelimiterspace} 2}}}{\left( {\frac{t}{{t + d(x,y_\alpha ^k)}}} \right)^N},\nonumber
 \end{align}
 where we denote by $y_{\alpha}^{k}$ the center of $Q_{\alpha}^k$. Hence,
 }
 {
 \setlength\abovedisplayskip{1ex}  
 \setlength\belowdisplayskip{1ex}
 \begin{align}\label{Proof_of_Theorem_3_1_04}
&|{t^2}L{e^{ - {t^2}L}}(\sum\limits_{k > j} {\sum\limits_{\alpha  \in {I_k}} {{s_{Q_\alpha ^k}}{a_{Q_\alpha ^k}}} } )\left( x \right)|\nonumber\\&
\begin{array}{*{20}{c}}
{}&{}&{}
\end{array} \le C\sum\limits_{k > j} {\sum\limits_{\alpha  \in {I_k}} {{t^{ - 2M}}\ell {{\left( {Q_\alpha ^k} \right)}^{2M}}\mu {{\left( {Q_\alpha ^k} \right)}^{{{ - 1} \mathord{\left/
 {\vphantom {{ - 1} 2}} \right.
 \kern-\nulldelimiterspace} 2}}}\frac{{|{s_{Q_\alpha ^k}}|}}{{{{\left[ {1 + {t^{ - 1}}d(x,{y_\alpha ^k})} \right]}^N}}}} } \nonumber\\&
\begin{array}{*{20}{c}}
{}&{}&{}
\end{array} \le C\sum\limits_{k > j} {{\delta ^{\left( {2M - N} \right)\left( {k - j} \right)}}\sum\limits_{\alpha  \in {I_k}} {\mu {{\left( {Q_\alpha ^k} \right)}^{{{ - 1} \mathord{\left/
 {\vphantom {{ - 1} 2}} \right.
 \kern-\nulldelimiterspace} 2}}}\frac{{|{s_{Q_\alpha ^k}}|}}{{{{\left[ {1 + \ell {{\left( {Q_\alpha ^k} \right)}^{ - 1}}d(x,{y_\alpha ^k})} \right]}^N}}}} } \nonumber\\&
\begin{array}{*{20}{c}}
{}&{}&{}
\end{array} \le C\sum\limits_{k > j} {{\delta ^{\left( {2M - N} \right)\left( {k - j} \right)}}{{\left[ {\mathcal{M}\left( {\sum\limits_{\alpha  \in {I_k}} {|{s_{Q_\alpha ^k}}{|^q}\mu {{\left( {Q_\alpha ^k} \right)}^{{{ - q} \mathord{\left/
 {\vphantom {{ - q} 2}} \right.
 \kern-\nulldelimiterspace} 2}}}{\mathcal{X}_{Q_\alpha ^k}}\left(  \cdot  \right)} } \right)\left( x \right)} \right]}^{{1 \mathord{\left/
 {\vphantom {1 q}} \right.
 \kern-\nulldelimiterspace} q}}}},
\end{align}
where the last inequality follows from Lemma \ref{Lemma_3_2}.

Estimate of the second part of \eqref{Proof_of_Theorem_3_1_03}. For any $k \le j$ and $\alpha \in I_{k}$, we write
 }
 $$
 \setlength\abovedisplayskip{1ex}  
 \setlength\belowdisplayskip{1ex}
 \left| {{t^2}L{e^{ - {t^2}L}}\left( {{a_{Q_\alpha ^k}}} \right)(x)} \right| = {t^2}\left| {{e^{ - {t^2}L}}\left( {L\left( {{a_{Q_\alpha ^k}}} \right)} \right)(x)} \right|.
 $$
 Then by Definition \ref{Def.AT_LM_Family}, the Gaussian estimate \eqref{Eq.GE} and inequality \eqref{Eq.Doubling_Int_Properties}, we get
 {
 \setlength\abovedisplayskip{1ex}  
 \setlength\belowdisplayskip{1ex}
 \begin{align}
&\left| {{t^2}L{e^{ - {t^2}L}}\left( {{a_{Q_\alpha ^k}}} \right)(x)} \right|\nonumber\\
&\begin{array}{*{20}{c}}
{}&{}&{}
\end{array} \le \frac{{{C_5}}}{{V\left( {x,t} \right)}}{t^2}\ell {\left( {Q_\alpha ^k} \right)^{ - 2}}\mu {\left( {Q_\alpha ^k} \right)^{{{ - 1} \mathord{\left/
 {\vphantom {{ - 1} 2}} \right.
 \kern-\nulldelimiterspace} 2}}}\int_{3Q_\alpha ^k} {\exp \left( { - \frac{{d{{\left( {x,y} \right)}^2}}}{{{C_6}{t^2}}}} \right)d\mu \left( y \right)}.\nonumber\\
&\begin{array}{*{20}{c}}
{}&{}&{}
\end{array} \le C{t^2}\ell {\left( {Q_\alpha ^k} \right)^{ - 2}}\mu {\left( {Q_\alpha ^k} \right)^{{{ - 1} \mathord{\left/
 {\vphantom {{ - 1} 2}} \right.
 \kern-\nulldelimiterspace} 2}}}{\left( {1 + \ell {{(Q_\alpha ^k)}^{ - 1}}d\left( {x,y_\alpha ^k} \right)} \right)^{ - N}},\nonumber
 \end{align}
 which implies that
 }
 {
 \setlength\abovedisplayskip{1ex}  
 \setlength\belowdisplayskip{1ex}
 \begin{align}\label{Proof_of_Theorem_3_1_05}
&|{t^2}L{e^{ - {t^2}L}}(\sum\limits_{k \le j} {\sum\limits_{\alpha  \in {I_k}} {{s_{Q_\alpha ^k}}{a_{Q_\alpha ^k}}} } )\left( x \right)|\nonumber\\&
\begin{array}{*{20}{c}}
{}&{}&{}
\end{array} \le C\sum\limits_{k \le j} {\sum\limits_{\alpha  \in {I_k}} {{t^2}\ell {{\left( {Q_\alpha ^k} \right)}^{ - 2}}\mu {{\left( {Q_\alpha ^k} \right)}^{{{ - 1} \mathord{\left/
 {\vphantom {{ - 1} 2}} \right.
 \kern-\nulldelimiterspace} 2}}}\frac{{|{s_{Q_\alpha ^k}}|}}{{{{\left[ {1 + {{\ell {{(Q_\alpha ^k)}^{ - 1}}}}d(x,{y_\alpha ^k})} \right]}^N}}}} } \nonumber\\&
\begin{array}{*{20}{c}}
{}&{}&{}
\end{array} \le C\sum\limits_{k \le j} {{\delta ^{2\left( {j - k} \right)}}\sum\limits_{\alpha  \in {I_k}} {\mu {{\left( {Q_\alpha ^k} \right)}^{{{ - 1} \mathord{\left/
 {\vphantom {{ - 1} 2}} \right.
 \kern-\nulldelimiterspace} 2}}}\frac{{|{s_{Q_\alpha ^k}}|}}{{{{\left[ {1 + \ell {{\left( {Q_\alpha ^k} \right)}^{ - 1}}d(x,{y_\alpha ^k})} \right]}^N}}}} } \nonumber\\&
\begin{array}{*{20}{c}}
{}&{}&{}
\end{array} \le C\sum\limits_{k \le j} {{{\delta^{2\left(j-k\right)}\left[ {\mathcal{M}\left( {\sum\limits_{\alpha  \in {I_k}} {|{s_{Q_\alpha ^k}}{|^q}\mu {{\left( {Q_\alpha ^k} \right)}^{{{ - q} \mathord{\left/
 {\vphantom {{ - q} 2}} \right.
 \kern-\nulldelimiterspace} 2}}}{\mathcal{X}_{Q_\alpha ^k}}\left(  \cdot  \right)} } \right)\left( x \right)} \right]}^{{1 \mathord{\left/
 {\vphantom {1 q}} \right.
 \kern-\nulldelimiterspace} q}}}}.
 \end{align}

By the same technique we used in \eqref{Proof_of_Theorem_3_1_06}, we combine now \eqref{Proof_of_Theorem_3_1_03}-\eqref{Proof_of_Theorem_3_1_05}, and get the following estimate of $G_L\left(f\right)$,
}\\
{
 \setlength\abovedisplayskip{1ex}  
 \setlength\belowdisplayskip{1ex}
\begin{align*}
{G_L}\left( f \right)\left( x \right) &\le {\left( {\sum\limits_{j \in \mathbb{Z}} {\int_{{\delta ^j}}^{{\delta ^{j - 1}}} {|{t^2}L{e^{ - {t^2}L}}(\sum\limits_{k > j} {\sum\limits_{\alpha  \in {I_k}} {{s_{Q_\alpha ^k}}{a_{Q_\alpha ^k}}} } )\left( x \right){|^2}\frac{{dt}}{t}} } } \right)^{{1 \mathord{\left/
 {\vphantom {1 2}} \right.
 \kern-\nulldelimiterspace} 2}}}\\&
\begin{array}{*{20}{c}}
{}&{}&{}&{}&{}&{}&{}&{}
\end{array} + {\left( {\sum\limits_{j \in \mathbb{Z}} {\int_{{\delta ^j}}^{{\delta ^{j - 1}}} {|{t^2}L{e^{ - {t^2}L}}(\sum\limits_{k \le j} {\sum\limits_{\alpha  \in {I_k}} {{s_{Q_\alpha ^k}}{a_{Q_\alpha ^k}}} } )\left( x \right){|^2}\frac{{dt}}{t}} } } \right)^{{1 \mathord{\left/
 {\vphantom {1 2}} \right.
 \kern-\nulldelimiterspace} 2}}}\\
 &\le C\left( {\sum\limits_{j \in \mathbb{Z}} {\int_{{\delta ^j}}^{{\delta ^{j - 1}}} {|\sum\limits_{k > j} {{\delta ^{\left( {2M - N} \right)\left( {k - j} \right)}}{G_k}{{\left( x \right)}^{{1 \mathord{\left/
 {\vphantom {1 q}} \right.
 \kern-\nulldelimiterspace} q}}}} {|^2}} \frac{{dt}}{t}} } \right.\\
&\begin{array}{*{20}{c}}
{}&{}&{}&{}&{}&{}&{}&{}&{}&{}&{}&{}&{}&{}&{}
\end{array}{\left. { + \sum\limits_{j \in \mathbb{Z}} {\int_{{\delta ^j}}^{{\delta ^{j - 1}}} {|\sum\limits_{k \le j} {{\delta ^{2\left( {j - k} \right)}}{G_k}{{\left( x \right)}^{{1 \mathord{\left/
 {\vphantom {1 q}} \right.
 \kern-\nulldelimiterspace} q}}}} {|^2}} \frac{{dt}}{t}} } \right)^{{1 \mathord{\left/
 {\vphantom {1 2}} \right.
 \kern-\nulldelimiterspace} 2}}}\\
\end{align*}
 \begin{align*} &\le C\left( {\sum\limits_{j \in \mathbb{Z}} {\int_{{\delta ^j}}^{{\delta ^{j - 1}}} {\sum\limits_{k > j} {{\delta ^{\left( {2M - N} \right)\left( {k - j} \right)}}{G_k}{{\left( x \right)}^{{2 \mathord{\left/
 {\vphantom {2 q}} \right.
 \kern-\nulldelimiterspace} q}}}} } \frac{{dt}}{t}} } \right.\\&
\begin{array}{*{20}{c}}
{}&{}&{}&{}&{}&{}&{}&{}&{}&{}&{}&{}&{}&{}&{}
\end{array}{\left. { + \sum\limits_{j \in \mathbb{Z}} {\int_{{\delta ^j}}^{{\delta ^{j - 1}}} {\sum\limits_{k \le j} {{\delta ^{2\left( {j - k} \right)}}{G_k}{{\left( x \right)}^{{2 \mathord{\left/
 {\vphantom {2 q}} \right.
 \kern-\nulldelimiterspace} q}}}} } \frac{{dt}}{t}} } \right)^{{1 \mathord{\left/
 {\vphantom {1 2}} \right.
 \kern-\nulldelimiterspace} 2}}}\\
& = C{\left( {\sum\limits_{k \in \mathbb{Z}} {{G_k}{{\left( x \right)}^{{2 \mathord{\left/
 {\vphantom {2 q}} \right.
 \kern-\nulldelimiterspace} q}}}(\sum\limits_{j > k} {{\delta ^{\left( {2M - N} \right)\left( {k - j} \right)}}}  + \sum\limits_{j \ge k} {{\delta ^{2\left( {j - k} \right)}}} )} } \right)^{{1 \mathord{\left/
 {\vphantom {1 2}} \right.
 \kern-\nulldelimiterspace} 2}}}\\
&\le C{\left( {\sum\limits_{k \in \mathbb{Z}} {{G_k}{{\left( x \right)}^{{2 \mathord{\left/
 {\vphantom {2 q}} \right.
 \kern-\nulldelimiterspace} q}}}} } \right)^{{1 \mathord{\left/
 {\vphantom {1 2}} \right.
 \kern-\nulldelimiterspace} 2}}},
\end{align*}
where ${G_k}\left( x \right) = {\cal M}\left( {\sum\nolimits_{\alpha  \in {I_k}} {|{s_{Q_\alpha ^k}}{|^q}\mu {{\left( {Q_\alpha ^k} \right)}^{{{ - q} \mathord{\left/
 {\vphantom {{ - q} 2}} \right.
 \kern-\nulldelimiterspace} 2}}}{{\cal X}_{Q_\alpha ^k}}\left(  \cdot  \right)} } \right)\left( x \right)$, for $k \in \mathbb{Z}$. Hence, by employing Corollary \ref{Corollary_Property_of_Growth_Functions_04}, we have
}
{
 \setlength\abovedisplayskip{1ex}  
 \setlength\belowdisplayskip{1ex}
\begin{align*}
&\int_X {\varphi \left( {x,{{{G_L}\left( f \right)(x)} \mathord{\left/
 {\vphantom {{{G_L}\left( f \right)(x)} \lambda }} \right.
 \kern-\nulldelimiterspace} \lambda }} \right)d\mu \left( x \right)} \\&
\begin{array}{*{20}{c}}
{}&{}&{}&{}
\end{array} \le C\int_X {\varphi \left( {x,{\lambda ^{ - 1}}{{\left( {\sum\nolimits_{k \in \mathbb{Z}} {{G_k}{{\left( x \right)}^{{2 \mathord{\left/
 {\vphantom {2 q}} \right.
 \kern-\nulldelimiterspace} q}}}} } \right)}^{{1 \mathord{\left/
 {\vphantom {1 2}} \right.
 \kern-\nulldelimiterspace} 2}}}} \right)d\mu \left( x \right)} \\&
\begin{array}{*{20}{c}}
{}&{}&{}&{}
\end{array} \le C\int_X {\varphi \left( {x,{\lambda ^{ - 1}}{{\left( {\sum\nolimits_{k \in \mathbb{Z}} {{{\left( {\sum\nolimits_{\alpha  \in {I_k}} {|{s_{Q_\alpha ^k}}{|^q}\mu {{\left( {Q_\alpha ^k} \right)}^{{{ - q} \mathord{\left/
 {\vphantom {{ - q} 2}} \right.
 \kern-\nulldelimiterspace} 2}}}{{\cal X}_{Q_\alpha ^k}}\left( x \right)} } \right)}^{{2 \mathord{\left/
 {\vphantom {2 q}} \right.
 \kern-\nulldelimiterspace} q}}}} } \right)}^{{1 \mathord{\left/
 {\vphantom {1 2}} \right.
 \kern-\nulldelimiterspace} 2}}}} \right)d\mu \left( x \right)} \\&
\begin{array}{*{20}{c}}
{}&{}&{}&{}
\end{array} = C\int_X {\varphi \left( {x,{\lambda ^{ - 1}}{{\left( {\sum\nolimits_{k \in \mathbb{Z}} {\sum\nolimits_{\alpha  \in {I_k}} {|{s_{Q_\alpha ^k}}{|^2}\mu {{\left( {Q_\alpha ^k} \right)}^{ - 1}}{{\cal X}_{Q_\alpha ^k}}\left( x \right)} } } \right)}^{{1 \mathord{\left/
 {\vphantom {1 2}} \right.
 \kern-\nulldelimiterspace} 2}}}} \right)d\mu \left( x \right)} \\&
\begin{array}{*{20}{c}}
{}&{}&{}&{}
\end{array} = C\int_X {\varphi \left( {x,{{{W_f}\left( x \right)} \mathord{\left/
 {\vphantom {{{W_f}\left( x \right)} \lambda }} \right.
 \kern-\nulldelimiterspace} \lambda }} \right)d\mu \left( x \right)},
\end{align*}
which proves the reverse inequality in \eqref{Proof_of_Theorem_3_1_01} and completes the proof of the theorem.
}

\end{proofoftheorem}

Using Theorem \ref{Theorem_3_2} and Theorem \ref{Theorem_3_1}, we immediately obtain the following result.
\begin{theorem}{}\label{Theorem_3_3}
    Suppose $L$ is an operator that satisfies \rm{(\textbf{H1})} \emph{and} \rm{(\textbf{H2})}. \emph{Let $\varphi$ be a growth function with uniformly lower type $p_1$. Then the spaces $H_{\varphi, L}\left(X\right)$ and $H_{L, G, \varphi}\left(X\right)$ coincide and their norms are equivalent.}
\end{theorem}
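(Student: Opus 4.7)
The plan is to chain Theorem \ref{Theorem_3_2} and Theorem \ref{Theorem_3_1} through the common quantity $\|W_f\|_{L^\varphi(X)}$, and then pass from the dense $L^2$-intersections to the completions. The key observation is that the \textbf{AT}$_{\mathcal{L},M}$-expansion provided by Proposition \ref{Pro.ATLM_Family} is canonical (the coefficients $s_{Q_\alpha^k}$ are given by an explicit formula in terms of $t^2Le^{-t^2L}f$), so the function $W_f$ appearing in both theorems is literally the same object.

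First, I would fix $M\in\mathbb{N}$ with $M>nq(\varphi)/(2p_1)$, which is the threshold shared by both theorems, and take an arbitrary $f\in L^2(X)$ together with its canonical \textbf{AT}$_{\mathcal{L},M}$-expansion from Proposition \ref{Pro.ATLM_Family}. Applying Theorem \ref{Theorem_3_2} yields
\[
\|f\|_{H_{\varphi,L}(X)} \cong \|W_f\|_{L^\varphi(X)}
\qquad\text{whenever } f\in H_{\varphi,L}(X)\cap L^2(X),
\]
and applying Theorem \ref{Theorem_3_1} yields
\[
\|f\|_{H_{L,G,\varphi}(X)} \cong \|W_f\|_{L^\varphi(X)}
\qquad\text{whenever } f\in H_{L,G,\varphi}(X)\cap L^2(X).
\]
Since membership in either Hardy space (intersected with $L^2$) is characterized by $\|W_f\|_{L^\varphi(X)}<\infty$, the two intersections ${\tilde H}_{\varphi,L}(X)$ and ${\tilde H}_{L,G,\varphi}(X)$ coincide as sets, and on this common dense subspace we obtain directly, by transitivity,
\[
\|f\|_{H_{\varphi,L}(X)} \cong \|W_f\|_{L^\varphi(X)} \cong \|f\|_{H_{L,G,\varphi}(X)}.
\]

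Second, I would upgrade this identification from the dense $L^2$-subspaces to the full completions appearing in Definitions \ref{Def.H_S_L} and \ref{Def.H_G_L}. Because the two quasi-norms are equivalent on the common dense subset, the identity map extends uniquely to a bijective isometric-equivalence between the completions; this is a standard abstract fact about completions of equivalent quasi-normed spaces and does not require any further analytic input. The conclusion $H_{\varphi,L}(X)=H_{L,G,\varphi}(X)$ with equivalent quasi-norms follows.

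I do not anticipate a genuine obstacle: all analytic work has been absorbed into Theorems \ref{Theorem_3_2} and \ref{Theorem_3_1}. The only mild care needed is to verify that the \textbf{AT}$_{\mathcal{L},M}$-expansion used in both theorems is the same canonical one from Proposition \ref{Pro.ATLM_Family}, so that the symbol $W_f$ is unambiguous; and to invoke the standard completion argument cleanly, noting that the quasi-triangle constants are uniformly controlled so that the extended norms remain equivalent.
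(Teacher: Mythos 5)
Your proposal is correct and is exactly the argument the paper intends: the paper states Theorem~\ref{Theorem_3_3} follows immediately from Theorems~\ref{Theorem_3_2} and~\ref{Theorem_3_1} by chaining both norm equivalences through $\|W_f\|_{L^\varphi(X)}$, which is unambiguous precisely because the \textbf{AT}$_{\mathcal{L},M}$-expansion from Proposition~\ref{Pro.ATLM_Family} is canonical. Your added care about fixing a single admissible $M$ and passing to completions is a sound way of spelling out the details the paper leaves implicit.
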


\end{document}